\newtheorem{theorem}{Theorem}
\newtheorem{claim}[theorem]{Claim}
\newtheorem{lemma}[theorem]{Lemma}
\newtheorem{proposition}[theorem]{Proposition}
\theoremstyle{definition}
\newtheorem{definition}[theorem]{Definition}
\newtheorem{assumptions}[theorem]{Assumptions}
\theoremstyle{remark}
\newtheorem{remark}[theorem]{Remark}
\newtheorem{case}{Case}
\numberwithin{theorem}{section}
\numberwithin{equation}{section}
\newcommand{\dd}{\; \mathrm{d}}
\def\Xint#1{\mathchoice
{\XXint\displaystyle\textstyle{#1}}
{\XXint\textstyle\scriptstyle{#1}}
{\XXint\scriptstyle\scriptscriptstyle{#1}}
{\XXint\scriptscriptstyle\scriptscriptstyle{#1}}
\!\int}
\def\XXint#1#2#3{{\setbox0=\hbox{$#1{#2#3}{\int}$}
\vcenter{\hbox{$#2#3$}}\kern-.5\wd0}}
\def\dashint{\Xint-}
\begin{document}
\title[Differentiability in a Null Set]{Differentiability of Lipschitz Functions in Lebesgue Null Sets}
\author{David Preiss}
\author{Gareth Speight}

\thanks{The research leading to these results has received funding from the European Research Council under the European Union's Seventh Framework Programme (FP/2007-2013) / ERC Grant Agreement n.2011-ADG-20110209. The second author was supported by EPSRC funding.}

\begin{abstract}
We show that if $n>1$ then there exists a Lebesgue null set in $\mathbb{R}^{n}$ containing a point of differentiability of each Lipschitz function $f\colon \mathbb{R}^{n} \to \mathbb{R}^{n-1}$; in combination with the work of others, this completes the investigation of when the classical Rademacher theorem admits a converse. Avoidance of $\sigma$-porous sets, arising as irregular points of Lipschitz functions, plays a key role in the proof.
\end{abstract}

\maketitle

\section{Introduction}

Rademacher's theorem that Lipschitz functions on $\mathbb{R}^n$ are differentiable almost everywhere is intrinsically important and has been the source of many modern developments. From the developments that are not directly related to the present work, we feel we just have to mention, at least briefly, the work of \citet{Che99}, \citet{Kei04a} and \citet{Bat13}, which starting from the notion of metric measure spaces satisfying the Poincar\'e inequality eventually led to understanding differentiability of Lipschitz functions on metric spaces in a way similar to Rademacher's theorem. On the other hand, the investigation of validity of an infinite dimensional generalization of Rademacher's theorem, a survey of older results in Chapters 4--6 of the authoritative book by \citet{BL99}, and very recent progress presented in the recent research monograph by \citet*{LPT12} has been basic for much of what we do here.

To introduce our result, the most natural formulation of the classical Rademacher theorem states that if a Lipschitz function $f \colon \mathbb{R}^n \to \mathbb{R}^m$ is differentiable at no point of a set $A\subset\mathbb{R}^n$, then $A$ must be Lebesgue null. The natural converse of this statement asks: given a Lebesgue null set $A \subset \mathbb{R}^n$, does there exist a Lipschitz function $f\colon \mathbb{R}^n \to \mathbb{R}^m$ which is differentiable at no point of $A$? The answer has been long known to be positive and relatively easy in the case $m=n=1$, when the statement of Rademacher's theorem is a special case of Lebesgue's differentiation of monotone functions. Although we are unable to find the first reference to this result, we may refer the reader to the full description of sets of non-differentiability of real valued Lipschitz functions on the real line due to \citet{Zah46}, or to a modern variant of Zahorski's argument in \cite{FP09}.

The discovery that the converse to the higher dimensional version of Rademacher's theorem is not straightforward came originally as a byproduct of an infinite dimensional differentiability result of \citet{Pre90}: for $n>1$ there is a Lebesgue null set in $\mathbb{R}^n$ containing a point of differentiability for every real valued Lipschitz function: for example, any Lebesgue null $G_\delta$ set containing all lines passing through distinct points with rational coordinates has this property. It seems probable that, similarly to what happened with \cite{Pre90}, a modification of the proof of \cite{LPT12} of a differentiability result for Lipschitz maps of infinite dimensional Hilbert spaces to $\mathbb{R}^2$ would lead to showing that the converse to Rademacher's theorem fails for maps from $\mathbb{R}^n$ to $\mathbb{R}^2$ for $n >2$. However, these authors also show that the corresponding differentiability result for $\mathbb{R}^3$ valued maps is false, thereby indicating that for general $m,n$ new methods are needed. 

In combination with two recently announced developments, by \citet*{ACP10} and by \citet*{CJ13}, our result completely answers the question of validity of the converse to the Rademacher theorem: it holds if and only if $m\ge n$. Indeed, \cite{ACP10} shows this when $n=2$ and, for general $n$, provides necessary and sufficient geometric criteria for a set to be contained in the non-differentiability set of a Lipschitz function $f \colon \mathbb{R}^n \to \mathbb{R}^n$. The sets satisfying these criteria form a $\sigma$-ideal, implying that for any given $n$ the problem of validity of the converse to the Rademacher theorem for functions $f \colon \mathbb{R}^n \to \mathbb{R}^m$ has the same answer for $m\ge n$. The question whether this $\sigma$-ideal coincides with the $\sigma$-ideal of Lebesgue null sets was open until \citet{CJ13} announced a very deep and difficult result showing that this is indeed the case. Together, these results imply that the converse to the Rademacher theorem is true provided that $m\ge n$. Here we fill in the last piece of the puzzle by showing that in all remaining cases the converse actually fails.

\begin{theorem}\label{theoremdiffinnullset}
Suppose $n>1$. Then there exists a Lebesgue null set $N\subset \mathbb{R}^{n}$ containing a point of differentiability for every Lipschitz function $f\colon \mathbb{R}^{n} \to \mathbb{R}^{n-1}$.
\end{theorem}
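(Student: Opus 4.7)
My plan is to adapt to the codimension-one vector case $m=n-1$ the variational strategy that Preiss used in the scalar target case. The first ingredient is the construction of the null set $N$ itself. I would take $N$ to be a Lebesgue-null $G_\delta$ subset of $\mathbb{R}^n$ with two structural properties: first, $N$ is rich in line segments — for example, a countable dense family of line segments with all rational directions through rationally placed base points sits inside $N$, so that by Rademacher's theorem the restriction of any Lipschitz $f$ to each such segment is differentiable almost everywhere on it; and second, $N$ is not contained in any $\sigma$-porous subset of $\mathbb{R}^n$. Both conditions can be arranged simultaneously by a standard construction: take a shrinking sequence of open sets $G_k$ of measure tending to zero, each an open neighbourhood of a dense family of segments, and intersect.

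Given a Lipschitz map $f\colon \mathbb{R}^n \to \mathbb{R}^{n-1}$, I would locate a point of differentiability of $f$ in $N$ by maximizing the scalar functional
$$
\Phi_f(x; e_1, \ldots, e_{n-1}) := \bigl|\det\bigl[f'(x, e_1) \mid \cdots \mid f'(x, e_{n-1})\bigr]\bigr|,
$$
the absolute value of the $(n-1) \times (n-1)$ determinant whose columns are the one-sided directional derivatives of $f$ in the unit directions $e_j$. This quantity is bounded by $(\mathrm{Lip}\, f)^{n-1}$, so its supremum over configurations $(x; e_1, \ldots, e_{n-1})$ with $x \in N$ and the $f'(x, e_j)$ existing is finite. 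By extracting an almost-maximizing sequence and exploiting the $G_\delta$ structure of $N$ together with passage to a limit, one aims to produce a near-maximizing configuration $(x_*; e_1^*, \ldots, e_{n-1}^*)$ with $x_* \in N$.

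At such a near-maximizer, I would argue as follows. Rotating some $e_j^*$ toward a new direction $v$ for which $f'(x_*, v)$ also exists, the perturbed value of $\Phi_f$ cannot exceed the original; to first order this yields a linear constraint on the map $e \mapsto f'(x_*, e)$ strong enough to force it to coincide with a linear function of $e$ on a spanning collection of directions. Thus there is a candidate linear map $L\colon \mathbb{R}^n \to \mathbb{R}^{n-1}$ with which all directional derivatives at $x_*$ agree. The passage from such agreement to \emph{total} differentiability is not automatic, however: the discrepancy set — the "irregular points" of $f$, at which every directional derivative exists and depends linearly on direction yet $f$ fails to be differentiable — is well known to be $\sigma$-porous. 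Since $N$ was designed not to sit inside any $\sigma$-porous set, one can insist that $x_*$ additionally avoids this exceptional set of $f$, and total differentiability of $f$ at $x_*$ follows.

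The principal obstacle I anticipate is the simultaneous management of several competing requirements during the maximization: the approximate maximizer $x_k$ must stay in the $G_\delta$ set $N$, must be a point at which sufficiently many directional derivatives of $f$ exist, must be close to maximizing $\Phi_f$, and must lie outside the $\sigma$-porous set of irregular points of $f$. Passing to a limit while preserving all four demands is delicate, especially because the set of points in $N$ where a prescribed directional derivative exists is only a full-measure subset of a line rather than a topologically nice set. The technical heart of the proof should therefore be a careful compactness/diagonalization scheme along the almost-maximizing sequence, supported by a construction of $N$ flexible enough to accommodate the perturbations of $(x, e_1, \ldots, e_{n-1})$ needed to make the variational argument bite.
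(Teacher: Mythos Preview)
Your proposal has the right general shape---a variational scheme on a structured null set, with $\sigma$-porosity used to upgrade directional information to full differentiability---but there are genuine gaps in both the construction of $N$ and the variational step.

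\textbf{The null set.} A $G_\delta$ null set containing a dense family of line segments and not lying in any $\sigma$-porous set is easy to build, but these properties are too weak for the codimension-one case. What the argument actually needs is a measure $\nu$ on $\mathbb{R}^n$ that is (i) supported on a Lebesgue null set, (ii) doubling, so that every $\sigma$-porous set is $\nu$-null, and (iii) behaves well enough under restriction to $(n-1)$-planes that Rademacher applies along those planes. The paper obtains this by taking a doubling measure $\mu$ on $\mathbb{R}$ singular to Lebesgue (Tukia) and forming $\mathcal{L}^{n-1}\times\mu$; the null set $N$ contains all rational-affine images of $\mathbb{R}^{n-1}\times\widetilde N$ where $\mu(\widetilde N)=1$ and $\mathcal{L}^1(\widetilde N)=0$. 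Without such a measure you have no way to \emph{quantify} avoidance of the irregular-point set: ``$N$ is not contained in any $\sigma$-porous set'' only tells you $N\setminus P\neq\varnothing$ for each $\sigma$-porous $P$, not that a point chosen by a variational procedure lands in $N\setminus P$. Your sentence ``one can insist that $x_*$ additionally avoids this exceptional set'' is circular---$x_*$ has already been selected as a near-maximizer, and nothing forces it to be regular.

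\textbf{The variational step.} Maximizing the determinant $|\det[f'(x,e_1)\mid\cdots\mid f'(x,e_{n-1})]|$ and perturbing the $e_j$ does not, even formally, force $e\mapsto f'(x_*,e)$ to be linear; at best it gives a first-order criticality condition on an $(n-1)$-frame, which says nothing about the behaviour of $f$ in the transverse direction or on nearby parallel planes. The actual mechanism in the paper is different: one works on the space of pairs $(x,T)$ with $T\in L(\mathbb{R}^{n-1},\mathbb{R}^n)$ at which $f$ is \emph{regularly} differentiable in the direction of $T$, equips it with a complete metric, and applies a perturbed variational principle to $-\|f'(x;T)\|_H^2$. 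If the limiting minimizer $(x_\infty,T_\infty)$ were not a point of full differentiability, one finds a short segment along which $f$ deviates from its candidate derivative, and then \emph{averages} over a thin $(n-1)$-dimensional slab near $x_\infty$ with respect to $\mathcal{L}^{n-1}\times\mu$ to produce a new pair $(x,T)\in N\times L(\mathbb{R}^{n-1},\mathbb{R}^n)$ with strictly smaller perturbed functional. The averaging is where the doubling measure is indispensable: it controls the measure of points with large regularity defect (via a maximal-function estimate) and guarantees that $\mathcal{L}^{n-1}\times\mu$-almost every point of the slab lies in $N$ and is a regular point of $f$. None of this machinery is available from your line-segment construction or your determinant functional.

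In short, the decisive idea you are missing is the singular doubling measure, which simultaneously supplies the null set, the avoidance of $\sigma$-porous sets, and the integral estimates needed to close the contradiction.
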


Perhaps surprisingly, our approach to proving this is not related to known results on $\varepsilon$-differentiability. The notion of $\varepsilon$-differentiability is defined similarly to differentiability but with fixed error $\varepsilon$ in the first order approximation of a function by its derivative (rather than arbitrarily small error on sufficiently small scales). The $\varepsilon$-differentiability results appeared first in the infinite dimensional context in \cite{LP96} (see Chapter 4 of \cite{LPT12} for further developments). The restriction of their proof to the finite dimensional situation would provide a Lebesgue null set $N \subset \mathbb{R}^n$ such that every Lipschitz map $f \colon \mathbb{R}^n \to \mathbb{R}^{n-2}$ has points of $\varepsilon$-differentiability inside $N$ for every $\varepsilon > 0$. This was improved by \citet{dPH02}: for $n > 1$ there exists a Lebesgue null set $N \subset \mathbb{R}^n$ such that every Lipschitz map $f \colon \mathbb{R}^n \to \mathbb{R}^{n-1}$ has points of $\varepsilon$-differentiability inside $N$ for every $\varepsilon > 0$. The reason why the present development does not build on $\varepsilon$-differentiability results is treated in detail in the infinite dimensional context throughout \cite{LPT12}. Here we just mention that, while $\varepsilon$-differentiability has been proved for Lipschitz maps of a Hilbert space to any $\mathbb{R}^n$, differentiability is known only for Lipschitz maps to $\mathbb{R}$ or $\mathbb{R}^2$.

The direction our approach to proving Theorem \ref{theoremdiffinnullset} took started with the second named author's answer in \cite{Spe13} to the question from \cite{LPT12} on size of $\sigma$-porous sets. These sets (defined in Definition \ref{porousdefinition}) form a subclass of non-differentiability sets, and one of the contributions of \cite{LPT12} was in better understanding of what was noticed in \cite{LP03}, that knowledge of smallness of porous sets is an important step in proving a differentiability result. Results of Chapter 10 in \cite{LPT12} show that in many spaces, including the finite dimensional ones, $\sigma$-porous sets are null on typical curves as well as on typical 2-dimensional surfaces (where `typical' is understood in the sense of Baire category). The result of \citet{Spe13} shows that even in $\mathbb{R}^4$ this is no longer the case for 3-dimensional surfaces. This opened the door to questions whether the deep infinite dimensional counterexamples from Chapter 14 of \cite{LPT12} have an analogy in the finite dimensional situation. By discovering that the answer is no, the second named author made an important, and as it turned out decisive, step toward the proof of Theorem \ref{theoremdiffinnullset}.

It is natural to ask how small can be sets $N\subset\mathbb{R}^n$ inside which one may find a point of differentiability of every Lipschitz $f \colon \mathbb{R}^n \to \mathbb{R}^{m}$. For $n>m=1$, 
this was studied by Dor\'e and Maleva \cite{DM11} who found such sets can be made compact and of Hausdorff dimension one, and recently Dymond and Maleva \cite{DyM14} proved that they can make them of Minkowski dimension one. It seems possible that the finite dimensional analogy of the above mentioned $\mathbb{R}^2$ valued differentiability result of \cite{LPT12} may lead to similar improvements for $n>m=2$. However, our method differs significantly from the methods used to prove these improvements; so at the present time we can only notice that a simple modification of our arguments (explained in Remark \ref{finrem}) provides, for any $n>m$ and $\tau>0$, a set $N\subset\mathbb{R}^n$ of Hausdorff dimension at most $m+\tau$ containing a point of differentiability of every Lipschitz $f \colon \mathbb{R}^n \to \mathbb{R}^{m}$.

We now fix $n>1$ for the remainder of the paper and briefly describe the structure of the proof of Theorem \ref{theoremdiffinnullset}. It follows that of a proof by \citet*[Theorem 13.1.1]{LPT12}, which constructs points of Fr\'{e}chet differentiability for vector valued Lipschitz functions on infinite dimensional Banach spaces satisfying particular smoothness assumptions.

The idea behind both of the proofs is to apply a variational principle (Lemma \ref{vpstatement}) so that a perturbation of the function
\[(x,T) \mapsto \|f'(x;T)\|_{H}^{2}\]
attains a maximum at some $(x_{\infty},T_{\infty})$. Here $(x,T)$ are pairs such that $f$ is regularly differentiable at $x$ in the direction $T$ (Definition \ref{regularlydifferentiable}), $f'(x;T)$ is the directional derivative of $f$ at $x$ in direction $T$, and $\|\cdot\|_{H}$ is the Hilbert-Schmidt norm. Intuitively, $f$ is regularly differentiable at $x$ in direction $T$ if changes in values of $f$ are approximated by $f'(x;T)$ not only on planes with direction $T$ passing through $x$ but also on nearby parallel planes.

One assumes $f$ is not differentiable at $x_{\infty}$ and uses regular differentiablity to find a pair $(x,T)$ such that $f$ is regularly differentiable at $x$ in direction~$T$ and $\|f'(x;T)\|_{H}^{2}$ is larger than $\|f'(x_{\infty},T_{\infty})\|_{H}^{2}$. Since only a perturbation of the directional derivative was maximized, to get a contradiction, one must also choose $(x,T)$ so that the corresponding change in the perturbing functions is small relative to $\|f'(x;T)\|_{H}^{2}-\|f'(x_{\infty};T_{\infty})\|_{H}^{2}$.

To prove Theorem \ref{theoremdiffinnullset} we consider points $x$ in a Lebesgue null set and need to find points of regular differentiability in the direction of $n-1$ dimensional planes. To do this we use the fact that points where $f$ is differentiable, but not regularly differentiable, in some direction are irregular (Definition \ref{regularirregular} and Lemma \ref{regularpointdiff}). Further, the irregular points of $f$ form a $\sigma$-porous set (Lemma \ref{irregularporous}).

Intuitively, a set is porous (Definition \ref{porousdefinition}) if each point of the set sees relatively large holes in the set on arbitrarily small scales. A set is $\sigma$-porous if it is a countable union of porous sets. The collection of $\sigma$-porous sets in $\mathbb{R}^{n}$ is strictly contained in the family of meager Lebesgue null sets. For a survey of porous sets, including their interesting applications to differentiability, see \citep{Zaj88} and \citep{Zaj05}.

It follows from work of \citet{Tuk89} that there exists a doubling measure (Definition \ref{definitiondoublingmeasure}) $\mu$ on $\mathbb{R}$ which gives full measure to a Lebesgue null set $\widetilde{N} \subset \mathbb{R}$ (Theorem \ref{singulardoublingmeasure}). Since porous sets have measure zero with respect to doubling measures (Proposition \ref{doublingporousnull}) and $\mathcal{L}^{n-1} \times \mu$ is doubling, we can find many points in the Lebesgue null set $\mathbb{R}^{n-1} \times \widetilde{N}$ outside a given $\sigma$-porous set. The Lebesgue null set in Theorem \ref{theoremdiffinnullset} is constructed using affine copies of the set $\mathbb{R}^{n-1} \times \widetilde{N}$ (Definition \ref{nullset}). Since irregular points of $f$ form a $\sigma$-porous set, it follows that such a set contains many points of regular differentiability of $f$ in the direction of $n-1$ dimensional planes. The fact that the measure $\mu$ is doubling also helps us to control the perturbation terms.

\section{Doubling Measures}

We first recall, and for completeness prove, some basic facts about doubling measures.

\begin{definition}\label{definitiondoublingmeasure}
A Borel measure $\mu$ on a metric space $M$ is \emph{doubling} if balls have finite positive measure and there exists $C\geq 1$ such that
\[\mu(B(x,2r))\leq C\mu(B(x,r))\]
for all $x \in M$ and $r>0$.
\end{definition}

Doubling measures give relatively large measure to subballs of relatively large radius.

\begin{lemma}\label{doublingsubballs}
Suppose $\mu$ is a doubling measure on $\mathbb{R}^{m}$. Then there exists a constant $C(\mu)\geq 1$ such that
\[\frac{\mu(B(y,s))}{\mu(B(x,r))} \leq C(\mu) \left(\frac{s}{r}\right)^{C(\mu)}\]
whenever $B(x,r) \subset B(y,s)$.
\end{lemma}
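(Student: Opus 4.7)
The plan is to reduce everything to iterated application of the doubling property along the chain of balls concentric at $x$.

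First I would enlarge $B(y,s)$ to a ball centered at $x$. Since $B(x,r) \subset B(y,s)$, choosing any point near $x$ inside $B(x,r)$ and letting it tend to $x$ gives $|x-y| \le s$; hence by the triangle inequality
\[
B(y,s) \subset B(x, |x-y| + s) \subset B(x, 2s).
\]
This is the step that lets me compare two balls centered at the \emph{same} point, which is what the doubling property is designed to handle.

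Next I would iterate the doubling estimate. Let $C \ge 1$ denote the doubling constant of $\mu$ from Definition \ref{definitiondoublingmeasure}, and choose $k$ to be the smallest nonnegative integer with $2s/2^{k} \le r$, so $k \le \log_{2}(2s/r) + 1 = \log_{2}(4s/r)$. Iterating the doubling inequality $k$ times gives $\mu(B(x,2s)) \le C^{k}\,\mu(B(x, 2s/2^{k}))$, and monotonicity of $\mu$ together with $2s/2^{k} \le r$ yields $\mu(B(x, 2s/2^{k})) \le \mu(B(x,r))$. Combining with the inclusion from the first step,
\[
\frac{\mu(B(y,s))}{\mu(B(x,r))} \le C^{k} \le C^{\log_{2}(4s/r)} = C^{2}\left(\frac{s}{r}\right)^{\log_{2} C}.
\]
Setting $C(\mu) = \max\{C^{2}, \log_{2} C\}$ (or any constant dominating both factors) gives the stated bound.

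There is no real obstacle here; the only minor subtlety is the opening reduction, where one should be slightly careful with the distinction between open and closed balls when deducing $|x-y| \le s$ from $B(x,r) \subset B(y,s)$, but this is harmless because the doubling condition is stated for all radii. Everything else is a routine iteration of the doubling inequality and a conversion from the integer exponent $k$ to the continuous exponent $\log_{2}(s/r)$.
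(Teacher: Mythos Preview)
Your proof is correct and essentially the same as the paper's: both use $B(y,s)\subset B(x,2s)$ and then iterate the doubling inequality along a dyadic chain of balls centered at $x$, converting the integer number of doublings into a power of $s/r$. The only cosmetic difference is that the paper doubles upward from $r$ (choosing $N$ with $2^{N}r\ge 2s$) while you halve downward from $2s$; the resulting constants and logic are identical.
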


\begin{proof}
Let $N$ be an integer such that $\log_{2}(s/r)+1 \leq N \leq \log_{2}(s/r)+2$. Then $2^{N}r \geq 2s$ and so $B(y,s) \subset B(x,2s) \subset B(x,2^{N}r)$. Hence,
\[\frac{\mu(B(y,s))}{\mu(B(x,r))} \leq \frac{\mu(B(x,2^{N}r))}{\mu(B(x,r))} \leq C(\mu)^{N} \leq C(\mu)^{\log_{2}(s/r)+2} \leq C(\mu)\left( \frac{s}{r} \right)^{C(\mu)}.\]
\end{proof}

We also need to know that doubling measures give small measures to thin annuli, independently of the centre and radius of the associated ball.

\begin{proposition}\label{doublingfills}
Suppose $\mu$ is a doubling measure on $\mathbb{R}^{m}$ and $\varepsilon > 0$. Then there is $\delta>0$ such that
\[\frac{\mu(B(x,(1-t)r))}{\mu(B(x,r))}\geq 1-\varepsilon\]
whenever $x \in \mathbb{R}^{m}$, $r>0$ and $0<t<\delta$.
\end{proposition}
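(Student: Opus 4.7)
Let $A := B(x,r)\setminus B(x,(1-t)r)$. My plan is to combine a covering of $A$ by balls of radius comparable to its thickness $tr$ with the sub-ball estimate of Lemma~\ref{doublingsubballs}.

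\textbf{Step 1 (cover).} Choose a maximal $(tr/4)$-separated set $\{y_i\}_{i=1}^{N}$ on the intermediate sphere $\partial B(x,(1-t/2)r)$. An elementary Euclidean packing argument gives $N\leq C_m t^{-(m-1)}$. Any $z\in A$ lies within $tr/2$ of this sphere, and every sphere point is within $tr/4$ of some $y_i$; hence the balls $B(y_i,tr)$ cover $A$.

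\textbf{Step 2 (disjoint partners and doubling).} The balls $B(y_i,tr/8)$ are pairwise disjoint and all contained in the interior shell $S := B(x,(1-3t/8)r)\setminus B(x,(1-5t/8)r)\subset B(x,r)$, so $\sum_i\mu(B(y_i,tr/8))\leq\mu(S)$. Applying Lemma~\ref{doublingsubballs} to the inclusion $B(y_i,tr/8)\subset B(y_i,tr)$ produces a constant $C_1=C_1(\mu)$ with $\mu(B(y_i,tr))\leq C_1\mu(B(y_i,tr/8))$, and combining gives the key estimate
\[\mu(A)\leq C_1\mu(S).\]
Thus the outer annulus is controlled by a thinner annulus lying strictly inside $B(x,r)$, with only the multiplicative loss $C_1$.

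\textbf{Step 3 (iteration and summation).} Iterate the argument: $S$ is again an annulus, of smaller relative thickness at outer radius $(1-3t/8)r$, so the same procedure produces a further interior shell $S_2\subset S$ with $\mu(S)\leq C_1\mu(S_2)$, and so on. Continuing gives pairwise disjoint interior shells $S_1,S_2,\ldots\subset B(x,r)$ satisfying $\mu(S_k)\geq C_1^{-k}\mu(A)$. Since $\sum_k\mu(S_k)\leq\mu(B(x,r))$, summing the resulting geometric series together with the observation that each iteration reduces the shells' relative thickness by a factor bounded away from $1$ yields the required uniform bound $\mu(A)\leq\varepsilon\mu(B(x,r))$ once $t$ is small enough. \textbf{The main obstacle} is precisely this quantitative bookkeeping: a naive one-step summation gives only the constant bound $\mu(A)\leq(1-1/C_1)\mu(B(x,r))$, so the genuine $t$-dependent smallness requires exploiting that after $\sim\log(1/t)$ iterations the shells' outer radii have decreased substantially, which via Lemma~\ref{doublingsubballs} translates into a polynomial-in-$t$ gain.
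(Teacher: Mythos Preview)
Your Step 3 does not close. You correctly note that the geometric-series bound $\sum_k C_1^{-k}\mu(A)\le\mu(B(x,r))$ only yields $\mu(A)\le(C_1-1)\mu(B(x,r))$, independent of $t$. But the proposed rescue---that after $\sim\log(1/t)$ iterations the outer radii have ``decreased substantially''---is false for your iteration. At each step the shell's relative thickness drops by a fixed factor (roughly $1/4$), so the thicknesses $t_k$ form a geometric series with $\sum_k t_k=O(t)$. Consequently the outer radius of $S_k$ is $\prod_{j<k}(1-ct_j)\,r\ge(1-c\sum_j t_j)r\ge(1-Ct)r$: the shells never move more than $O(tr)$ inward, and Lemma~\ref{doublingsubballs} gives no gain from such a tiny radius change. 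The iteration as you set it up simply cannot produce $t$-dependent smallness.

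The paper's argument avoids this by a different decomposition. It fixes $N$ with $2^{-N}\sim t$ and takes \emph{dyadic} annuli $A_i=B(x,(1-2^{-i-1})r)\setminus B(x,(1-2^{-i})r)$ for $i=1,\dots,N$, whose thicknesses $2^{-i-1}r$ \emph{increase} as one moves inward. The key geometric point is that $A_i$ has thickness comparable to its distance from the boundary sphere, so a maximal packing of $A_i$ by balls of radius $2^{-i-2}r$ has the property that a bounded dilation of these balls covers the entire thin boundary annulus $A=B(x,r)\setminus B(x,(1-2^{-N})r)$. This gives $\mu(A)\le C\mu(A_i)$ with the \emph{same} constant $C$ for every $i$. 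Summing over the $N$ disjoint $A_i$ then yields $\mu(A)\le(C/N)\mu(B(x,r))$, and $N\to\infty$ as $t\to 0$. The contrast with your scheme is that you obtain $\mu(S_k)\ge C_1^{-k}\mu(A)$ (decaying), whereas the paper obtains $\mu(A_i)\ge C^{-1}\mu(A)$ (uniform); only the latter sums to something growing with the number of annuli.
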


\begin{proof}
Suppose $x \in \mathbb{R}^{m}$ and $r>0$ with
\[\mu(B(x,r)\setminus B(x,(1-2^{-N})r))>\alpha\mu(B(x,r))\]
for some $N \in \mathbb{N}$ and $\alpha>0$. Define, for $i=1, \ldots, N$,
\[A_{i}=B(x,(1-2^{-i-1})r)\setminus B(x,(1-2^{-i})r).\]

Fix $1 \leq i \leq N$. Choose a (necessarily finite) collection $B(x_{j},2^{-i-2}r) \subset A_{i}$ of disjoint open balls such that, for a constant $C_1>0$ depending only on $m$,
\[B(x,r)\setminus B(x,(1-2^{-N})r) \subset \bigcup\nolimits_{j}B(x_{j},C_12^{-i-2}r).\]
Since the balls $B(x_{j},2^{-i-2}r)$ are disjoint it follows from Lemma \ref{doublingsubballs} that, for some constant $C$ depending on $C_1$ and the doubling constant of $\mu$,
\[\mu(B(x,r)\setminus B(x,(1-2^{-N})r)) \leq C\sum\nolimits_j \mu(B(x_{j},2^{-i-1}r)) \leq C\mu(A_{i}).\]
Since the sets $A_{i}$ are disjoint we deduce,
\[\mu(B(x,r))\geq \sum_{i=1}^{N} \mu(A_{i}) \geq N\alpha \mu(B(x,r))/C.\]
Hence by choosing $N$ sufficiently large (independently of $x$ and $r$) we can ensure $\alpha$ is small. This gives the desired conclusion.
\end{proof}

\begin{definition}
If $\mu$ is a doubling measure on $\mathbb{R}^{m}$ and $f\colon \mathbb{R}^{m} \to \mathbb{R}$ is locally integrable we define the \emph{maximal operator} $M_{\mu}f$ by
\[M_{\mu}f(x)=\sup_{r>0} \frac{1}{\mu(B(x,r))}\int_{B(x,r)}|f|\dd\mu.\]
\end{definition}

We recall that if $\mu$ is doubling then the maximal operator cannot increase the norm on $L^{2}(\mu)$ too much \citep[Theorem 2.2]{Hei01}.

\begin{lemma}\label{maxoperator}
Suppose $\mu$ is a doubling measure on $\mathbb{R}^{m}$ and $f \in L^{2}(\mu)$. Then there is a constant $C(\mu) \geq 1$ such that
\[\|M_{\mu}f\|_{L^{2}(\mu)}\leq C(\mu) \|f\|_{L^{2}(\mu)}.\]
\end{lemma}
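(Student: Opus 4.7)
The plan is to prove the $L^{2}$ bound by first establishing the weak-type $(1,1)$ inequality
\[\mu(\{M_\mu f > \lambda\}) \leq \frac{C_1}{\lambda} \|f\|_{L^{1}(\mu)} \qquad (\lambda>0,\ f \in L^{1}(\mu)),\]
and then converting it to a strong $L^{2}$ estimate via a layer-cake / Marcinkiewicz-type decomposition. The doubling hypothesis enters only through Lemma \ref{doublingsubballs}.

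For the weak-$(1,1)$ estimate, for each $x$ in the (open) superlevel set $\{M_\mu f > \lambda\}$ there is a ball $B_x = B(x,r_x)$ with $\int_{B_x} |f|\dd\mu > \lambda \mu(B_x)$. I would apply a Vitali-type $5r$-covering lemma (after truncating to bounded radii $r_x \leq R$ and letting $R \to \infty$ at the end) to extract a disjoint subfamily $\{B_{x_i}\}$ whose $5$-dilates cover the level set. Lemma \ref{doublingsubballs} applied to $B(x_i,r_{x_i}) \subset B(x_i,5 r_{x_i})$ yields a constant $C_1$, depending only on $\mu$, with $\mu(B(x_i,5r_{x_i})) \leq C_1 \mu(B_{x_i})$; summing,
\[\mu(\{M_\mu f > \lambda\}) \leq C_1 \sum_{i} \mu(B_{x_i}) \leq \frac{C_1}{\lambda} \sum_{i} \int_{B_{x_i}} |f|\dd\mu \leq \frac{C_1}{\lambda}\|f\|_{L^{1}(\mu)}.\]

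To pass from weak $(1,1)$ to strong $L^{2}$, for each $\lambda>0$ I would split $f = f_1 + f_2$ with $f_1 = f \mathbf{1}_{\{|f|>\lambda/2\}}$; note $f_1 \in L^{1}(\mu)$ whenever $f \in L^{2}(\mu)$, by Chebyshev. Since $|f_2| \leq \lambda/2$ pointwise gives $M_\mu f_2 \leq \lambda/2$, one has $\{M_\mu f > \lambda\} \subset \{M_\mu f_1 > \lambda/2\}$. Applying the weak-$(1,1)$ bound to $f_1$ and inserting the result into the layer-cake identity
\[\|M_\mu f\|_{L^{2}(\mu)}^{2} = 2 \int_{0}^{\infty} \lambda\, \mu(\{M_\mu f > \lambda\})\dd\lambda,\]
and then swapping the order of integration in the resulting double integral over $\lambda$ and $x$, produces $\|M_\mu f\|_{L^{2}(\mu)}^{2} \leq 8 C_1 \|f\|_{L^{2}(\mu)}^{2}$.

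The main obstacle is really organisational: one must verify that the weak-type constant is absolute, i.e.\ depends only on the doubling constant and not on the particular balls selected by the covering lemma. This is exactly the content of Lemma \ref{doublingsubballs}, which provides polynomial (and hence uniform) control of $\mu(B(x_i, 5 r_{x_i}))/\mu(B_{x_i})$. Once this control is in place the Vitali extraction and the layer-cake calculation are standard and routine.
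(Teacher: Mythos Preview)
Your argument is the standard and correct one: Vitali $5r$-covering plus doubling gives the weak-$(1,1)$ bound, and the layer-cake splitting yields the strong $L^2$ estimate with constant $\sqrt{8C_1}$. The only point worth tightening is that you do not really need the truncation-and-limit step if you observe that the level set $\{M_\mu f>\lambda\}$ has finite measure once the weak bound is applied to a preliminary truncation, or simply run the covering on an arbitrary compact subset; either way the constant is unaffected.

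As for comparison with the paper: there is nothing to compare. The paper does not prove this lemma at all; it merely recalls it with a reference to \cite[Theorem~2.2]{Hei01}. Your sketch is exactly the classical proof one finds there (or in Stein), so in effect you have supplied what the paper chose to outsource.
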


\section{Regularity Defect}

Recall that a function $f\colon M \to N$ between metric spaces is \emph{Lipschitz} if there is a constant $L \geq 0$ such that
\[d(f(x),f(y))\leq Ld(x,y)\]
for all $x, y \in M$. We denote the smallest such $L$, called the \emph{Lipschitz constant} of $f$, by $\mathrm{Lip}(f)$.

We now discuss regularity and derive estimates needed for the proof of Theorem \ref{theoremdiffinnullset}. Note we identify $\mathbb{R}^{n-1}$ with the subspace of $\mathbb{R}^{n}$ in which the final coordinate is zero.

\begin{definition}\label{regularitydefect}
Let $f\colon \mathbb{R}^{n} \to \mathbb{R}^{n-1}$ be a Lipschitz function and $u \in \mathbb{R}^{n}$. We define the \emph{defect of regularity} of $f$ at the point $u$ by
\[\mathrm{reg}_{n-1}f(u)=\sup_{\substack{|v|+|w|>0\\ v-w \in \mathbb{R}^{n-1}}} \frac{|f(u+v)-f(u+w)|}{|v|+|w|}.\]
If $\Omega \subset \mathbb{R}^{n}$ we define $\mathrm{reg}_{n-1}f(u,\Omega)$, the \emph{defect of restricted regularity}, by the same formula but with the requirement that the open straight segment $(u+v,u+w)$ lies in the set $\Omega$.
\end{definition}

Intuitively, the regularity defect of $f$ at $u$ describes how much $f$ varies on line segments, with direction in $\mathbb{R}^{n-1}$, which are not too short compared to their distance from $u$.

If $f\colon \mathbb{R}^{n} \to \mathbb{R}^{n-1}$ we denote the derivative of $f$ with respect to the first $n-1$ variables by $f_{n-1}'$. Thus $f_{n-1}'(u)\in L(\mathbb{R}^{n-1},\mathbb{R}^{n-1})$ whenever it exists.

The proof of the following lemma is adapted from the proof of a similar result by \citet*[Lemma 9.5.4, Lemma 13.2.3]{LPT12}. The main difference is that in one direction we have a general doubling measure rather than the Lebesgue measure.

\begin{lemma}\label{pointwiseregularity}
Let $\mu$ be a doubling measure on $\mathbb{R}$ and $f\colon \mathbb{R}^{n} \to \mathbb{R}^{n-1}$ be Lipschitz. Suppose $a \in \mathbb{R}^{n}$ and $R>0$. Then there is a constant $C(\mu)\geq 1$ (independent of $f$, $a$, and $R$) such that
\begin{multline*}
\left( \mathrm{reg}_{n-1}f(a,B(a,R)) \right)^{C(\mu)} \\
\leq C(\mu)(\mathrm{Lip}(f))^{C(\mu)-1}M_{\mathcal{L}^{n-1} \times \mu} (1_{B(a,R)}\|f_{n-1}'\|)(a).
\end{multline*}
\end{lemma}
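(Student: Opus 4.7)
Fix a pair $(v,w)$ admissible in the supremum defining $\mathrm{reg}_{n-1}f(a,B(a,R))$, so $\xi:=v-w\in\mathbb{R}^{n-1}$, $\rho:=|v|+|w|>0$, and the open segment $(a+v,a+w)\subset B(a,R)$. Write $L:=\mathrm{Lip}(f)$, $\nu:=\mathcal{L}^{n-1}\times\mu$, $g:=\mathbf{1}_{B(a,R)}\|f'_{n-1}\|$, $A:=M_\nu g(a)$, and $V:=|f(a+v)-f(a+w)|/\rho$. The plan is to prove $V\leq C\,L^{1-1/C(\mu)}A^{1/C(\mu)}$ by a perturb-and-average argument, then raise to the $C(\mu)$-th power and take the supremum over admissible $(v,w)$.

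For each fixed $c\in\mathbb{R}$, the restriction $z'\mapsto f(z',c)$ is Lipschitz on $\mathbb{R}^{n-1}$, so by Rademacher applied to the slice together with Fubini in the horizontal direction, the one-dimensional fundamental theorem of calculus in direction $\xi$ gives
\[f(a+y+v)-f(a+y+w)=\int_0^1 f'_{n-1}(a+y+w+t\xi)\,\xi\dd t\]
for $\mathcal{L}^{n-1}$-a.e.\ $y'$ at each fixed $y_n$; since this exceptional set is $\mathcal{L}^{n-1}$-null for every $y_n$, Fubini on the product $\nu$ makes the identity hold for $\nu$-a.e.\ $y$. Combining with the Lipschitz bounds $|f(a+v)-f(a+y+v)|,|f(a+w)-f(a+y+w)|\leq L|y|$ and averaging over $y\in U_\eta:=B^{n-1}(0,\eta)\times(-\eta,\eta)$ with respect to $\nu$ gives
\[V\rho\leq 2\sqrt 2\,L\eta+\frac{|\xi|}{\nu(U_\eta)}\int_{U_\eta}\int_0^1\|f'_{n-1}(a+y+w+t\xi)\|\dd t\dd\nu(y).\]
Because $\xi\in\mathbb{R}^{n-1}$, swapping the order of integration and noting that for each target $z$ the $t$-fibre has $\mathcal{L}^1$-length at most $2\eta/|\xi|$ bounds the double integral by $(2\eta/|\xi|)\int_E\|f'_{n-1}\|\dd\sigma$, where $E$ is a horizontal tube of radius $\eta$ around $[a+w,a+v]$ crossed with a vertical slab of thickness $2\eta$, and $\sigma$ is the pushforward of $\nu$ under the change of variables (equal to $\mathcal{L}^{n-1}$ times a translate of $\mu$ by $w_n$ in the $n$-th coordinate). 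After, if necessary, shrinking the segment very slightly inside $B(a,R)$ one may arrange $\max(\rho,\eta)\leq R/C$ so that $E\subset B(a,C\max(\rho,\eta))\subset B(a,R)$ and $\|f'_{n-1}\|$ agrees with $g$ on $E$. Two invocations of Lemma~\ref{doublingsubballs}---one to compare $\mathcal{L}^{n-1}$-measures of the horizontal tube and of $B^{n-1}(0,\eta)$, and one to compare the $\mu$-measure of an interval of radius $\eta$ near $a_n+w_n$ with one near $a_n$ (both lying within $B^\mu(a_n,2\rho)$ since $|w_n|\leq\rho$)---together with the defining inequality $\int_{B(a,r)}g\dd\nu\leq A\,\nu(B(a,r))$ for $A$ yield
\[\frac{|\xi|}{\nu(U_\eta)}\int_{U_\eta}\int_0^1\|f'_{n-1}\|\dd t\dd\nu\leq C(\mu)\left(\frac{\rho}{\eta}\right)^{C(\mu)}\eta A.\]

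Dividing by $\rho$ and writing $s:=\eta/\rho\in(0,1]$, this becomes $V\leq 2\sqrt 2\,L s+C(\mu)\,A\,s^{1-C(\mu)}$; the minimum of the right-hand side over $s\in(0,1]$ is of order $L^{1-1/C(\mu)}A^{1/C(\mu)}$ and is attained at $s\sim(A/L)^{1/C(\mu)}$, while if this optimal $s$ exceeds $1$ (i.e.\ $A\gtrsim L$) the trivial Lipschitz bound $V\leq L$ already satisfies the required inequality. Raising to the $C(\mu)$-th power and taking the supremum over admissible $(v,w)$ then completes the proof. The principal technical subtlety is the measure-translation step: because $\mu$ is only doubling and not translation invariant, the pushforward $\sigma$ is not $\nu$, and bridging this discrepancy requires the quantitative doubling of Lemma~\ref{doublingsubballs} (which is precisely what forces the final exponent to depend on $\mu$ through its doubling exponent rather than on dimension alone).
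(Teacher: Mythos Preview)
Your overall strategy---thicken the segment, integrate $\|f'_{n-1}\|$ over a tube, and use doubling to compare with the maximal function---is the right one and is essentially what the paper does. But the ``measure-translation step'' you flag at the end is not a subtlety you have handled; it is a genuine gap.

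You average over $y\in U_\eta$ with respect to $\nu=\mathcal{L}^{n-1}\times\mu$, so the vertical integration is $\int_{-\eta}^\eta(\cdots)\,d\mu(y_n)$ while the $n$-th coordinate of the point where $\|f'_{n-1}\|$ is evaluated is $a_n+w_n+y_n$. After changing variables you therefore face $\int h\,d\sigma$ with $\sigma$ a \emph{translate} of $\nu$ in the last coordinate. You then invoke Lemma~\ref{doublingsubballs} to ``bridge'' $\sigma$ and $\nu$. That lemma, however, only compares $\mu$-measures of nested intervals; it says nothing about $\int h\,d\sigma$ versus $\int h\,d\nu$ for a general nonnegative $h$. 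For the singular doubling measures actually used in the paper (Theorem~\ref{singulardoublingmeasure}), a translate of $\mu$ can be mutually singular with $\mu$, so $\int_E\|f'_{n-1}\|\,d\sigma$ and $\int_E\|f'_{n-1}\|\,d\nu$ need bear no relation whatsoever, and your claimed bound $C(\mu)(\rho/\eta)^{C(\mu)}\eta A$ does not follow.

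The fix is to integrate $d\mu$ directly in the $n$-th coordinate of the \emph{argument}, not in the shift. Concretely, for $y'\in B^{n-1}(0,\eta)$ and $p_n\in I:=(a_n+w_n-\eta,\,a_n+w_n+\eta)$ set $y=(y',\,p_n-a_n-w_n)$; then $|y|\le\sqrt2\,\eta$ as before, but now the tube integral is genuinely $\int\|f'_{n-1}(z',p_n)\|\,d\mathcal{L}^{n-1}(z')\,d\mu(p_n)$, which is an honest $\nu$-integral over a region inside $B(a,C\rho)$, hence bounded by $A\,\nu(B(a,C\rho))$. Lemma~\ref{doublingsubballs} is then used only to compare $\mu(I)$ with $\mu(a_n-C\rho,a_n+C\rho)$---a comparison of intervals, which is exactly what that lemma provides---and your optimisation over $s=\eta/\rho$ goes through. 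This is precisely how the paper organises the argument: it parametrises the tube by the actual last coordinate $s$ and integrates $d\mu(s)$ over an interval around $x_n$, then uses Lemma~\ref{doublingsubballs} to compare $\mu(x_n-cr,x_n+cr)$ with $\mu(-S,S)$.
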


\begin{proof}
Since the statement is translation invariant and homogeneous with respect to $f$ we can assume $a=0$ and $\mathrm{Lip}(f)=1$. Fix $u, v \in B(0,R)$ with $u-v \in \mathbb{R}^{n-1}$. Suppose $r=|f(u)-f(v)|>0$ and let $S=\max(|u|,|v|)$ and $e=(u-v)/|u-v|$. 

There exists an absolute constant $0<c<1$ 
such that for some $x\in \mathbb{R}^{n}$ and $L>0$, if 
$z \in B(x,2cr)$ then  $|f(z+Le)-f(z)|\geq cr$
and $z+te \in B(0,S)$ for all $0\leq t \leq L$.
Hence, for each $z \in B(x,cr)$,
\[cr \leq \left| \int_{0}^{L} \frac{d}{dt} f(z+te) \dd t \right| \leq  \int_{0}^{L} \|f_{n-1}'(z+te)\| \dd t.\]
Let $V=\{v\in \mathbb{R}^{n-1}: \langle v,e \rangle=0\}$. For each $v\in V\cap B(0,cr)$ we
integrate with respect to $\mu$ to get
\[cr \mu(x_{n}-cr,x_n+cr) \leq \int_{-cr}^{cr}\int_{0}^L \|f_{n-1}'(x + v+te+se_n)\| \dd t \dd \mu(s).\]
Since $\mu$ is doubling, by Lemma \ref{doublingsubballs},
\[\frac{\mu(-S,S)}{\mu(x_{n}-cr,x_n+cr)} \leq c(\mu)(S/cr)^{c(\mu)}.\]
The previous two inequalities imply
\[c r (cr/S)^{c(\mu)} \mu(-S,S)
c(\mu) \leq \int_{-cr}^{cr}\int_{0}^L \|f_{n-1}'(x + v+te+se_n)\| \dd t \dd \mu(s).
\]
Denoting by $\alpha$ the volume of the $n-2$ dimensional unit ball and
integrating over $v\in V\cap B(0,cr)$ with respect to the Lebesgue measure 
leads to
\[\alpha c^{n+c(\mu)-1}r^{n-1} (r/S)^{c(\mu)} \mu(-S,S)
\leq c(\mu)
\int_{B(0,S)} \|f_{n-1}'\| \dd (\mathcal{L}^{n-1} \times \mu),
\]
which we rearrange to
\[(r/S)^{n+c(\mu)-1} \leq \frac{c(\mu)}{\alpha c^{n+c(\mu)-1}S^{n-1} \mu (-S,S)} \int_{B(0,S)} \|f_{n-1}'\| \dd (\mathcal{L}^{n-1} \times \mu), \]
and, observing $(\mathcal{L}^{n-1} \times \mu) (B(0,S))\le 2\alpha S^{n-1} \mu (-S,S)$, 
infer that
\[(r/S)^{n+c(\mu)-1} \leq \frac{2c(\mu)}{c^{n+c(\mu)-1}(\mathcal{L}^{n-1} \times \mu) (B(0,S))} \int_{B(0,S)} \|f_{n-1}'\| \dd (\mathcal{L}^{n-1} \times \mu). \]
With $C(\mu)= n+ 2c(\mu)/c^{n+c(\mu)-1}$ we see from $C(\mu) \geq n+c(\mu)-1$ that
\[\left(\frac{|f(u)-f(v)|}{|u|+|v|}\right)^{C(\mu)} 
\le \left(\frac{|f(u)-f(v)|}{|u|+|v|}\right)^{n+c(\mu)-1}\\
\leq \left(\frac{r}{S}\right)^{n+c(\mu)-1}.\]
Hence
\[
\left(\frac{|f(u)-f(v)|}{|u|+|v|}\right)^{C(\mu)} 
\leq C(\mu)M_{\mathcal{L}^{n-1} \times \mu} (1_{B(0,S)}\|f_{n-1}'\|)(0).\]
By taking a supremum, we conclude
\[ \left( \mathrm{reg}_{n-1}f(0,B(0,R)) \right)^{C(\mu)} 
\leq C(\mu)M_{\mathcal{L}^{n-1} \times \mu} (1_{B(0,R)}\|f_{n-1}'\|)(0)\]
as required.
\end{proof}

The proof of the following proposition is essentially the same as that of a similar result by \citet*[Corollary 13.2.4]{LPT12}.

\begin{proposition}\label{integralregularity}
Let $\Omega \subset \mathbb{R}^{n}$ be a bounded Borel measurable set, $\mu$ be a doubling measure on $\mathbb{R}$, $f\colon \mathbb{R}^{n} \to \mathbb{R}^{n-1}$ be Lipschitz and $s, \lambda >0$. Then
\[\begin{split}
&\mathcal{L}^{n-1} \times \mu \{u \in \mathbb{R}^{n}: B(u,s) \subset \Omega,\, \mathrm{reg}_{n-1} f(u,B(u,s))>\lambda \}\\
& \qquad \leq \frac{C(\mu)(\mathrm{Lip}(f))^{2C(\mu)-2}}{\lambda^{2C(\mu)}} \int_{\Omega} \|f_{n-1}'\|^{2} \dd (\mathcal{L}^{n-1} \times \mu).
\end{split}\]
\end{proposition}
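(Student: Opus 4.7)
The plan is to combine the pointwise bound from Lemma \ref{pointwiseregularity} with the $L^2$-boundedness of the maximal operator (Lemma \ref{maxoperator}) and a Chebyshev-type inequality. Roughly, at every point $u$ of the exceptional set, the regularity defect is large, hence a certain maximal average of $\|f_{n-1}'\|$ is large, so the set has small $\mathcal{L}^{n-1}\times\mu$-measure by Chebyshev, and the $L^2$ bound converts the maximal function estimate into an integral of $\|f_{n-1}'\|^2$ over $\Omega$.

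More concretely, suppose $u$ lies in the set in question, so $B(u,s)\subset\Omega$ and $\mathrm{reg}_{n-1}f(u,B(u,s))>\lambda$. Applying Lemma \ref{pointwiseregularity} at $a=u$ with $R=s$ gives
\[\lambda^{C(\mu)} < \bigl(\mathrm{reg}_{n-1}f(u,B(u,s))\bigr)^{C(\mu)} \le C(\mu)(\mathrm{Lip}(f))^{C(\mu)-1}\, M_{\mathcal{L}^{n-1}\times\mu}\bigl(1_{B(u,s)}\|f_{n-1}'\|\bigr)(u).\]
Here is the one spot where a little care is needed: the truncation $1_{B(u,s)}$ depends on the base point $u$. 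However, since $B(u,s)\subset\Omega$, one has the pointwise domination $1_{B(u,s)}\|f_{n-1}'\|\le 1_\Omega\|f_{n-1}'\|$, and the maximal operator is monotone, so we may replace $1_{B(u,s)}$ by $1_\Omega$ on the right-hand side. This removes the $u$-dependence of the function being averaged and lets us work with a single $L^2$ function $g:=1_\Omega\|f_{n-1}'\|$.

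Setting $\tau=\lambda^{C(\mu)}/\bigl(C(\mu)(\mathrm{Lip}(f))^{C(\mu)-1}\bigr)$, the previous display shows that the exceptional set is contained in $\{M_{\mathcal{L}^{n-1}\times\mu} g > \tau\}$. Chebyshev's inequality and Lemma \ref{maxoperator} then yield
\[(\mathcal{L}^{n-1}\times\mu)\{M_{\mathcal{L}^{n-1}\times\mu} g > \tau\} \le \tau^{-2}\|M_{\mathcal{L}^{n-1}\times\mu} g\|_{L^2(\mathcal{L}^{n-1}\times\mu)}^2 \le \tau^{-2}C(\mu)^2\,\|g\|_{L^2(\mathcal{L}^{n-1}\times\mu)}^2.\]
Substituting the value of $\tau$ and noting that $\|g\|_{L^2}^2=\int_\Omega \|f_{n-1}'\|^2\, d(\mathcal{L}^{n-1}\times\mu)$ produces a bound of exactly the claimed shape, after absorbing numerical factors into the generic constant $C(\mu)$.

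There is essentially no obstacle beyond the small bookkeeping point above about replacing the $u$-dependent cutoff by $1_\Omega$. One should also verify that the set whose measure we are bounding is measurable, which follows from lower semicontinuity of $u\mapsto \mathrm{reg}_{n-1}f(u,B(u,s))$ and of $u\mapsto \operatorname{dist}(u,\mathbb{R}^n\setminus\Omega)$, together with the Borel measurability of $\Omega$; the same argument applied to Lemma \ref{maxoperator}'s maximal operator ensures that the level set in Chebyshev's step is measurable as well.
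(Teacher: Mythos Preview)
Your proof is correct and follows essentially the same route as the paper: apply Lemma~\ref{pointwiseregularity} at each $u$ with $B(u,s)\subset\Omega$, replace $1_{B(u,s)}$ by $1_\Omega$ via monotonicity, then use Chebyshev together with the $L^2$ maximal inequality (Lemma~\ref{maxoperator}). The only difference is that you spell out the monotonicity step and the measurability check, which the paper leaves implicit.
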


\begin{proof}
For each $u \in \mathbb{R}^{n}$ such that $B(u,s) \subset \Omega$ it follows, using Lemma \ref{pointwiseregularity},
\[ \left(\mathrm{reg}_{n-1}f(u,B(u,s)) \right)^{C(\mu)} \leq C(\mu)(\mathrm{Lip}(f))^{C(\mu)-1}M_{\mathcal{L}^{n-1} \times \mu} (1_{\Omega}\|f_{n-1}'\|)(u).\]
Using the maximal operator and Chebyshev's inequalities give,
\[\begin{split}
\mathcal{L}^{n-1} \times \mu & \{u \in \mathbb{R}^{n}: B(u,s) \subset \Omega,\, \mathrm{reg}_{n-1} f(u,B(u,s))>\lambda \}\\
&\leq \frac{C(\mu)(\mathrm{Lip}(f))^{2C(\mu)-2}}{\lambda^{2C(\mu)}} \int_{\mathbb{R}^{n}} M_{\mathcal{L}^{n-1} \times \mu}(1_{\Omega} \|f_{n-1}'\|)^{2} \dd (\mathcal{L}^{n-1} \times \mu)\\
&\leq \frac{C(\mu)(\mathrm{Lip}(f))^{2C(\mu)-2}}{\lambda^{2C(\mu)}} \int_{\Omega} \|f_{n-1}'\|^{2} \dd (\mathcal{L}^{n-1} \times \mu).
\end{split}\]
\end{proof}

From now on, since increasing the constants weakens the statements, we may assume the constants $C(\mu)$ appearing in statements of results in this section are the same.

\section{Regular Differentiability}

When proving Theorem \ref{theoremdiffinnullset} we will work with functions differentiable only in the direction of certain subspaces. A useful stronger condition is the notion of regular differentiability in a particular direction.

\begin{definition}\label{regularlydifferentiable}
Suppose $f\colon \mathbb{R}^{n} \to \mathbb{R}^{m}$, $T \in L(\mathbb{R}^{p},\mathbb{R}^{n})$ and $x \in \mathbb{R}^{n}$.
\begin{itemize}
	\item We say the function $f$ is \emph{differentiable at $x$ in the direction of $T$} if there is $f'(x;T) \in L(\mathbb{R}^{p},\mathbb{R}^{m})$ such that for every $\varepsilon >0$ there is $\delta > 0$ such that
	\[|f(x+Tv)-f(x)-f'(x;T)(v)|\leq \varepsilon |v|\]
	whenever $v \in \mathbb{R}^{p}$ and $|v|<\delta$.
	\item We say $f$ is \emph{regularly differentiable at $x$ in the direction of $T$} if $f$ is differentiable at $x$ in the direction of $T$ and for every $\varepsilon > 0$ there is $\delta >0$ such that
	\[|f(x+z+Tv)-f(x+z)-f'(x;T)(v)|\leq \varepsilon (|v|+|z|)\]
	whenever $v \in \mathbb{R}^{p}$, $z \in \mathbb{R}^{n}$ and $|v|+|z|<\delta$.
\end{itemize}
\end{definition}

\begin{definition}\label{regularirregular}
Suppose $f\colon \mathbb{R}^{n} \to \mathbb{R}^{m}$. We say that $x$ is a \emph{regular point} of $f$ if for every $v \in \mathbb{R}^{n}$ for which the directional derivative $f'(x;v)$ exists,
\[\lim_{t \to 0} \frac{f(x+tz+tv)-f(x+tz)}{t}=f'(x;v)\]
uniformly for $z \in \mathbb{R}^{n}$ such that $|z|\leq 1$. A point which is not regular is called \emph{irregular}.
\end{definition}

Intuitively regular differentiability of $f$ at $x$ in direction $T$ means the change in $f$ along lines in a direction $Tv$ which pass close to $x$ is well approximated by the directional derivative. Thus the following lemma can be expected.

\begin{lemma}\label{regularpointdiff}
Suppose the map $f\colon \mathbb{R}^{n} \to \mathbb{R}^{m}$ is Lipschitz, $T \in L(\mathbb{R}^{p},\mathbb{R}^{m})$ and $x \in \mathbb{R}^{n}$. Suppose $f$ is differentiable at $x$ in the direction of $T$ and $x$ is a regular point of $f$. Then $f$ is regularly differentiable at $x$ in the direction of $T$. 
\end{lemma}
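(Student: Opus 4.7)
The plan is to deduce the regular differentiability estimate
$|f(x+z+Tv)-f(x+z)-f'(x;T)v|\le \varepsilon(|v|+|z|)$
from the regular point property by a compactness argument, after rescaling. Throughout I write $L=\mathrm{Lip}(f)$ and note $\|f'(x;T)\|\le L\|T\|$.

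First I would observe that because $f$ is differentiable at $x$ in the direction of $T$, for every $\tilde v\in\mathbb{R}^p$ the one-sided derivative $f'(x;T\tilde v)$ exists and equals $f'(x;T)\tilde v$ (just substitute $v=t\tilde v$ in the defining inequality). Since $x$ is a regular point of $f$, applying the uniform limit in the definition of regularity with the direction $w=T\tilde v$ shows: for each fixed $\tilde v\in\overline{B_p(0,1)}$ and each $\eta>0$ there is $\delta(\tilde v)>0$ such that
\[|f(x+y+sT\tilde v)-f(x+y)-sf'(x;T)\tilde v|\le \eta\, s\]
whenever $0<s<\delta(\tilde v)$ and $|y|\le s$ (rewriting $y=sz$ with $|z|\le 1$).

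Next I would make the choice of $\delta(\tilde v)$ uniform in $\tilde v$. Cover the compact set $\overline{B_p(0,1)}$ by finitely many balls $B_p(\tilde v_i,\eta_0)$, with associated constants $\delta_i=\delta(\tilde v_i)$ and error $\varepsilon/3$. For an arbitrary $\tilde v\in\overline{B_p(0,1)}$, pick $i$ with $|\tilde v-\tilde v_i|<\eta_0$; then by the Lipschitz bound on $f$ and the operator-norm bound on $f'(x;T)$,
\[|f(x+y+sT\tilde v)-f(x+y+sT\tilde v_i)|+s\bigl|f'(x;T)(\tilde v-\tilde v_i)\bigr|\le 2L\|T\|\eta_0\, s.\]
Choosing $\eta_0$ so that $2L\|T\|\eta_0\le \varepsilon/3$ and setting $\delta=\min_i\delta_i$, I obtain, for every $\tilde v\in\overline{B_p(0,1)}$, every $0<s<\delta$ and every $y$ with $|y|\le s$,
\[|f(x+y+sT\tilde v)-f(x+y)-sf'(x;T)\tilde v|\le \tfrac{2\varepsilon}{3}\,s\le \varepsilon\, s.\]

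Finally, to recover the regular differentiability inequality, I would set $s=|v|+|z|$, $\tilde v=v/s$, $y=z$, so that $|\tilde v|\le 1$ and $|y|=|z|\le s$; the case $v=0$ is trivial. Then $sT\tilde v=Tv$, $sf'(x;T)\tilde v=f'(x;T)v$, and the displayed estimate becomes precisely
\[|f(x+z+Tv)-f(x+z)-f'(x;T)v|\le \varepsilon(|v|+|z|)\]
whenever $|v|+|z|<\delta$, as required. The main obstacle is the middle step: the regular point definition gives uniformity in the base-point perturbation $z$ only for each fixed direction $w$, so one must manufacture uniformity in the direction $\tilde v$, and the natural way is compactness of $\overline{B_p(0,1)}$ together with the Lipschitz control that lets neighboring directions be compared.
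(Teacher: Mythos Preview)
Your proof is correct and follows essentially the same route as the paper's: both use compactness of a bounded set of directions in $\mathbb{R}^p$ to upgrade the per-direction uniformity (in $z$) from the regular point definition to uniformity jointly in $z$ and in the direction, comparing nearby directions via the Lipschitz bound on $f$ and on $f'(x;T)$. The only real difference is bookkeeping: the paper normalizes to the sphere $\{|v|=\varepsilon\}$ and then handles the regime $|p|>\varepsilon^{-1}|w|$ separately by a crude Lipschitz estimate, whereas you normalize to the closed unit ball with the scale $s=|v|+|z|$, which absorbs both regimes at once and avoids the case split. One cosmetic point: your presentation chooses the finite cover before fixing $\eta_0$, but the logic requires fixing $\eta_0$ (from $\varepsilon$, $L$, $\|T\|$) first and then covering by balls of that radius; this is clearly what you intend.
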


\begin{proof}
Because $f$ is differentiable at $x$ in the direction of $T$ we have that $f'(x;Tv)$ exists for every $v \in V$ and $f'(x;Tv)=f'(x;T)(v)$. 

Let $\varepsilon>0$. Since $x$ is a regular point of $f$ and the sphere in $\mathbb{R}^{p}$ is compact, there exists $\delta >0$ such that whenever $|v|=\varepsilon$, $0<t<\delta$ and $|z|\leq 1$,
\[|f(x+tz+Ttv)-f(x+tz)-f'(x;T)(tv)|<\varepsilon^{2}t.\]
Set $w=tv$ so $|w|=t\varepsilon$. Then, provided $|w|<\varepsilon \delta$ and $|z|\leq 1$,
\[|f(x+\varepsilon^{-1}|w|z+Tw)-f(x+\varepsilon^{-1}|w|z)-f'(x;T)(w)|<\varepsilon |w|.\]
That is, provided $|w|<\varepsilon \delta$ and $|p|<\varepsilon^{-1}|w|$,
\[|f(x+p+Tw)-f(x+p)-f'(x;T)(w)|<\varepsilon |w|.\]

If $|p|>\varepsilon^{-1}|w|$ then, since $f$ is Lipschitz,
\[|f(x+p+Tw)-f(x+p)-f'(x;T)(w)|\leq 2\mathrm{Lip}(f)\|T\||w|<2\mathrm{Lip}(f)\|T\|\varepsilon |p|.\]

We have shown if $|w|<\varepsilon \delta$ then, for all $p \in \mathbb{R}^{n}$,
\[|f(x+p+Tw)-f(x+p)-f'(x;T)(w)|<\varepsilon |w|+2\mathrm{Lip}(f)\|T\|\varepsilon |p|.\]
Hence $f$ is regularly differentiable at $x$ in the direction $T$.
\end{proof}

Intuitively a set $P$ is porous if each point of $P$ sees nearby, on arbitrarily small scales, relatively large holes in $P$.

\begin{definition}\label{porousdefinition}
A set $P$ in a metric space $M$ is \emph{porous} if for each $x \in P$ there is $\lambda >0$ and $x_{k} \to x$ such that $B(x_{k},\lambda d(x_{k},x)) \cap P = \varnothing$. A set is \emph{$\sigma$-porous} if it is a countable union of porous sets.
\end{definition}

The following fact is well known \citep{Zaj05}; it follows directly from the fact that a Lebesgue density theorem holds for doubling measures.

\begin{proposition}\label{doublingporousnull}
Doubling measures give measure zero to porous sets.
\end{proposition}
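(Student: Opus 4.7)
My plan is to combine the Lebesgue density theorem for doubling measures (obtained via a standard Vitali-type covering argument applicable to any doubling measure on $\mathbb{R}^m$) with the porosity hypothesis. The density theorem states that for any Borel set $E \subset \mathbb{R}^m$, $\mu$-almost every $x \in E$ is a density point of $E$: $\mu(B(x,r) \cap E)/\mu(B(x,r)) \to 1$ as $r \to 0$. I will show that no point of a porous set $P$ can be such a density point of a well-chosen Borel cover of $P$, forcing $\mu^*(P) = 0$.

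First I would reduce to a uniform porosity constant by writing $P = \bigcup_{j=1}^{\infty} P_j$, where $P_j$ consists of those $x \in P$ for which porosity holds with $\lambda_j := 1/j$; it suffices to prove $\mu^*(P_j) = 0$ for each $j$. Fix $j$, write $\lambda = \lambda_j$, and, using the Radon regularity of $\mu$, choose a Borel cover $B_j \supset P_j$ with $\mu(B_j) = \mu^*(P_j)$. The minimality of $\mu(B_j)$ ensures that every Borel subset of $B_j$ disjoint from $P_j$ is $\mu$-null (otherwise removing it from $B_j$ would give a strictly smaller cover).

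For each $x \in P_j$, take the sequence $x_k \to x$ with $B(x_k, \lambda |x_k - x|) \cap P = \varnothing$ and set $r_k = (1+\lambda)|x_k - x| \to 0$. The inclusions
\[ B(x_k, \lambda|x_k - x|) \subset B(x, r_k) \subset B(x_k, (2+\lambda)|x_k - x|) \]
combined with Lemma \ref{doublingsubballs} produce a constant $c(\mu, \lambda) > 0$ such that $\mu(B(x_k, \lambda|x_k - x|)) \geq c(\mu, \lambda)\, \mu(B(x, r_k))$. Since $B(x_k, \lambda|x_k - x|)$ is an open set disjoint from $P_j \subset P$, the minimality property of $B_j$ forces $\mu(B_j \cap B(x_k, \lambda|x_k - x|)) = 0$, whence
\[ \frac{\mu(B_j \cap B(x, r_k))}{\mu(B(x, r_k))} \leq 1 - c(\mu, \lambda) < 1. \]

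Because $r_k \to 0$, this shows that no $x \in P_j$ is a density point of $B_j$. Applying the Lebesgue density theorem to the Borel set $B_j$ then forces $P_j$ to lie in a $\mu$-null subset of $B_j$, so $\mu^*(P_j) = 0$; summing over $j$ completes the proof. The chief technical subtlety is that a porous set need not be Borel: the device of passing to a minimal Borel cover $B_j$ neatly circumvents this by converting each porosity hole into a genuine measure-zero hole of $B_j$ on which the density theorem can be directly invoked.
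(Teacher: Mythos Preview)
Your argument is correct and is precisely the approach the paper indicates: the paper does not give a detailed proof but simply remarks that the result ``follows directly from the fact that a Lebesgue density theorem holds for doubling measures,'' citing \cite{Zaj05}. You have supplied exactly those details---the decomposition by uniform porosity constant, the use of Lemma~\ref{doublingsubballs} to give the hole a definite proportion of the ball's measure, and the passage to a minimal Borel hull to handle possible non-measurability of $P$---so your proof aligns with the paper's intended route and fills it in completely.
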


The following lemma \citep[Proposition 6.2.6]{LPT12} will be essential when finding regular points of $f$ inside a Lebesgue null set.

\begin{lemma}\label{irregularporous}
Suppose $f\colon \mathbb{R}^{n} \to \mathbb{R}^{m}$ is Lipschitz. Then the set of irregular points of $f$ is $\sigma$-porous.
\end{lemma}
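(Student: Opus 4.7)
The plan is to write the irregular set as a countable union of sets $I_{v,w,k,m}$ indexed by $v\in\mathbb{Q}^n$, $w\in\mathbb{Q}^m$ and positive integers $k,m$, and to show each such set is porous. Here $(v,w)$ will record a rational approximation of a direction $v_0\in\mathbb{R}^n$ witnessing irregularity and of the corresponding derivative $w_0=f'(x;v_0)$; $k$ records the size of the defect and $m$ the scale on which a finite-scale approximation to the derivative is in force. Concretely, I define $I_{v,w,k,m}$ to be the set of $x\in\mathbb{R}^n$ such that (a) $|f(x+tv)-f(x)-tw|\leq t/(100k)$ for every $t\in(0,1/m)$, and (b) there exist $t_j\to 0^+$ and $z_j\in\mathbb{R}^n$ with $|z_j|\leq 1$ and $|f(x+t_jz_j+t_jv)-f(x+t_jz_j)-t_jw|>t_j/k$.

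First I would check that every irregular $x$ belongs to some $I_{v,w,k,m}$. Starting from a witness direction $v_0$ with $w_0=f'(x;v_0)$ and sequences $t_j\to 0^+$, $|z_j|\leq 1$ realising a non-uniformity defect $\varepsilon>0$, pick $k$ larger than $4/\varepsilon$ and rationals $(v,w)$ approximating $(v_0,w_0)$ closely enough that $L|v-v_0|+|w-w_0|<1/(200k)$, where $L=\mathrm{Lip}(f)$. The existence of $f'(x;v_0)=w_0$ gives $|f(x+tv_0)-f(x)-tw_0|=o(t)$, which combines with the rational approximation to yield (a) for $t$ below some threshold $1/m$, and the same triangle-inequality passes the non-uniformity bound from $(v_0,w_0)$ to $(v,w)$, giving (b).

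The core step is porosity of each $I_{v,w,k,m}$. Fix $x\in I_{v,w,k,m}$ and let $(t_j,z_j)$ be as in (b) with $t_j<1/m$. Subtracting (a) from (b) at scale $t_j$ yields
\[
\bigl|[f(x+t_jz_j+t_jv)-f(x+t_jz_j)]-[f(x+t_jv)-f(x)]\bigr| > \tfrac{99}{100}\tfrac{t_j}{k},
\]
whose left side is at most $2Lt_j|z_j|$; hence $|z_j|\geq c$ for an absolute constant $c=c(L,k)>0$. Set $y_j=x+t_jz_j$, so $y_j\to x$ and $c t_j\leq|y_j-x|\leq t_j$. For any $y\in B(y_j,\rho t_j)$ with $\rho=1/(4Lk)$, the Lipschitz estimate applied to the bound $|f(y_j+t_jv)-f(y_j)-t_jw|>t_j/k$ gives
\[
|f(y+t_jv)-f(y)-t_jw| > t_j/k - 2L\rho t_j = t_j/(2k) > t_j/(100k),
\]
so (a) fails for $y$ at scale $t_j\in(0,1/m)$ and thus $y\notin I_{v,w,k,m}$. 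Since $\rho|y_j-x|\leq\rho t_j$, the ball $B(y_j,\rho|y_j-x|)$ misses $I_{v,w,k,m}$, giving porosity at $x$ with constant $\rho$.

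The delicate point is the balance of constants in the definition of $I_{v,w,k,m}$: the approximation constant in (a) must be strictly smaller than the defect constant in (b), with enough slack remaining that a Lipschitz perturbation on a ball of radius proportional to $|y_j-x|$ still violates (a) at the single scale $t_j$. The ratio $100$ used above is generous but any sufficient separation works, and the resulting $\rho$ becomes the porosity constant for $I_{v,w,k,m}$.
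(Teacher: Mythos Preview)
Your argument is correct. The paper does not give its own proof of this lemma but cites \cite[Proposition 6.2.6]{LPT12}; the approach you take---decomposing by rational approximations $(v,w)$ of a witnessing direction and its derivative, a scale threshold $1/m$, and a defect level $1/k$, then using a finite-scale surrogate (your condition (a)) in place of genuine directional differentiability so that a Lipschitz perturbation argument yields porosity---is essentially the standard proof one finds there.

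Two cosmetic points worth tidying. First, you reuse the letter $m$ for both the target dimension and the scale index; this is harmless but distracting. Second, the limit $t\to 0$ in Definition~\ref{regularirregular} is two-sided, while your conditions (a) and (b) treat only $t>0$. This causes no trouble: a witness sequence $t_j\to 0^-$ for direction $v_0$ becomes, via $s_j=-t_j$, a witness sequence $s_j\to 0^+$ for direction $-v_0$ (with $f'(x;-v_0)=-f'(x;v_0)$), and rational $v$ approximate $-v_0$ just as well. It is worth saying so explicitly.
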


\section{Definition of the Lebesgue Null Set}

As already mentioned, we need our Lebesgue null set to contain plenty of regular points of Lipschitz functions. The following theorem follows from results of \citet{Tuk89} on quasisymmetric mappings.

\begin{theorem}\label{singulardoublingmeasure}
There exists a doubling measure on $\mathbb{R}$ which gives full measure to a Lebesgue null set.
\end{theorem}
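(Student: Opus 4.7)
The plan is to derive the required measure from Tukia's construction of a singular quasisymmetric self-homeomorphism of $\mathbb{R}$. Recall that a strictly increasing homeomorphism $h\colon \mathbb{R} \to \mathbb{R}$ is \emph{quasisymmetric} if there is $K\geq 1$ with
\[K^{-1} \;\leq\; \frac{h(x+r)-h(x)}{h(x)-h(x-r)} \;\leq\; K\]
for every $x\in\mathbb{R}$ and $r>0$, and that such an $h$ is \emph{singular} if $h'=0$ for $\mathcal{L}^{1}$-almost every $x$. Tukia's results supply such an $h$ which is simultaneously quasisymmetric and singular, and I would take this $h$ as the sole input to the argument.

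Given $h$, I would define the Borel measure $\mu$ on $\mathbb{R}$ by $\mu((a,b))=h(b)-h(a)$ for $a<b$, so that $\mu(B(x,r))=h(x+r)-h(x-r)$. Strict monotonicity and continuity of $h$ ensure that $\mu$ assigns finite positive values to all balls. To verify the doubling condition, I would split
\[\mu(B(x,2r)) = \bigl(h(x+2r)-h(x+r)\bigr)+\bigl(h(x+r)-h(x)\bigr)+\bigl(h(x)-h(x-r)\bigr)+\bigl(h(x-r)-h(x-2r)\bigr).\]
The middle two summands together equal $\mu(B(x,r))$, while a single application of the quasisymmetry estimate at $y=x+r$ (respectively $y=x-r$) bounds each outer summand by $K$ times one of the middle two. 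Hence $\mu(B(x,2r)) \leq (1+2K)\mu(B(x,r))$, so $\mu$ is doubling.

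It remains to locate a Lebesgue null Borel set of full $\mu$-measure. For this I would invoke the Lebesgue decomposition $\mu=\mu_{ac}+\mu_{s}$ with respect to $\mathcal{L}^{1}$. By the classical differentiation theorem for monotone functions, the Radon--Nikodym density of $\mu_{ac}$ agrees $\mathcal{L}^{1}$-almost everywhere with $h'$, which vanishes by singularity of $h$. Thus $\mu=\mu_{s}$, which by definition is concentrated on some Borel set $\widetilde{N}\subset\mathbb{R}$ with $\mathcal{L}^{1}(\widetilde{N})=0$; equivalently $\mu(\mathbb{R}\setminus\widetilde{N})=0$, as required.

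The substantive content of the theorem is Tukia's construction of a singular quasisymmetric $h$, which is cited rather than reproduced; the main obstacle in writing this proof is therefore purely one of exposition, namely translating cleanly between the quasisymmetry inequality and the doubling inequality. Everything else is a short application of Lebesgue decomposition to the Stieltjes measure of $h$.
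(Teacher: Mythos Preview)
Your argument is correct and is exactly the standard derivation the paper has in mind: the paper gives no proof beyond the sentence ``follows from results of Tukia on quasisymmetric mappings,'' and what you have written is precisely how one passes from Tukia's singular quasisymmetric homeomorphism to a doubling measure singular with respect to Lebesgue measure. (Minor remark: your splitting in fact yields the sharper constant $1+K$ rather than $1+2K$, since each outer summand is bounded by $K$ times a \emph{different} middle summand; but this does not affect the argument.)
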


Recall that a set in a metric space is of class $G_{\delta}$ if it is a countable intersection of open sets. In the proof of Theorem \ref{theoremdiffinnullset} it will be convenient to work with a $G_{\delta}$ set. The reason for this is the following theorem of Mazurkiewicz \citep[Theorem 8.3]{Dug66}.

\begin{lemma}\label{Mazurkiewicz}
Let $Z$ be a complete metric space. Then $A \subset Z$ is topologically complete if and only if it is a $G_{\delta}$ set in $Z$.
\end{lemma}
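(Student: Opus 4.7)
The plan is to prove the two directions separately, both by classical arguments. Let me write $Z$ for the ambient complete metric space with metric $d$, and $A \subset Z$.

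For the easier implication ($G_\delta \Rightarrow$ topologically complete), I would first treat an open set $U \subset Z$, then reduce the general case to it. If $U$ is open and $U \neq Z$, the function $\varphi(x) = 1/d(x, Z \setminus U)$ is continuous and finite on $U$, and the map $x \mapsto (x, \varphi(x))$ embeds $U$ homeomorphically onto a closed subset of the complete space $Z \times \mathbb{R}$, which is therefore complete in the subspace metric. For a general $G_\delta$ set $A = \bigcap_{n=1}^\infty U_n$ with $U_n$ open in $Z$, I would apply the same idea to all coordinates at once, embedding $A$ into $Z \times \mathbb{R}^{\mathbb{N}}$ via $x \mapsto (x, \varphi_1(x), \varphi_2(x), \ldots)$ where $\varphi_n(x) = 1/d(x, Z \setminus U_n)$. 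The image is closed in the product (any convergent sequence in the image must stay away from each $Z \setminus U_n$, hence its $Z$-limit lies in $\bigcap U_n = A$), so $A$ inherits a complete metric compatible with its topology.

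For the harder implication (topologically complete $\Rightarrow G_\delta$), suppose $\rho$ is a complete metric on $A$ giving the subspace topology inherited from $(Z,d)$. Let $\overline{A}$ denote the closure of $A$ in $Z$. For each $n \in \mathbb{N}$, define
\[
V_n = \{x \in \overline{A} : \text{there exists a $Z$-open neighborhood $U$ of $x$ with $\rho\text{-diam}(U \cap A) < 1/n$}\}.
\]
Each $V_n$ is open in $\overline{A}$ by definition. I claim $A = \bigcap_n V_n$. The inclusion $A \subset \bigcap_n V_n$ follows because $d$ and $\rho$ induce the same topology on $A$: near each point of $A$ one can find a $d$-small neighborhood whose intersection with $A$ is $\rho$-small. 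Conversely, if $x \in \bigcap_n V_n$, I would pick a decreasing sequence of $Z$-open neighborhoods $U_n$ of $x$ with $d$-diameter and $\rho$-diameter of $U_n \cap A$ both less than $1/n$, then choose $x_n \in U_n \cap A$. The sequence $(x_n)$ is $\rho$-Cauchy, hence $\rho$-converges to some $y \in A$ by completeness; since the topologies agree on $A$, it also $d$-converges to $y$, but by construction $x_n \to x$ in $Z$, so $x = y \in A$.

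This shows $A$ is a $G_\delta$ in $\overline{A}$; writing $\overline{A} = Z \setminus \bigcup_k F_k$ where each $F_k$ is closed (in fact $\overline{A}$ itself is closed, hence already a $G_\delta$ in $Z$), and using that a $G_\delta$ in a $G_\delta$ is a $G_\delta$, we conclude $A$ is a $G_\delta$ in $Z$. The main obstacle is the second direction: one must correctly identify the intrinsic open sets $V_n$ that detect completeness, and verify that the Cauchy sequence extracted in $\bigcap V_n$ converges in $\rho$ using completeness of $(A,\rho)$ while simultaneously converging in $d$ to the candidate point $x \in \overline{A}$; the fact that $\rho$ and $d|_A$ induce the same topology—not that they are equivalent as metrics—is what makes this identification of the two limits legitimate.
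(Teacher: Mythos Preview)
The paper does not prove this lemma at all; it simply quotes it as a classical result of Mazurkiewicz with a reference to Dugundji's \emph{Topology}. Your proposal is the standard textbook proof of the Alexandrov--Mazurkiewicz theorem and is correct, modulo two cosmetic points: in the first direction you should say what $\varphi_n$ means when $U_n = Z$ (just set it to $0$), and in the second direction you should explicitly arrange the neighborhoods $U_n$ to be nested (by intersecting) so that the chosen points $x_n$ really form a $\rho$-Cauchy sequence. Since the paper offers nothing to compare against, there is no methodological contrast to draw.
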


It follows from the definitions that every Lebesgue null set is contained in a Lebesgue null set of class $G_{\delta}$.

Let $\mathcal{A}$ be a family of affine maps of the form $T\colon \mathbb{R}^{n} \to \mathbb{R}^{n}$ given by $Tx=Ax+v$, where $A \in L(\mathbb{R}^{n},\mathbb{R}^{n})$ has a matrix representation (with respect to the standard basis of $\mathbb{R}^{n}$) with rational entries and $v \in \mathbb{Q}^{n}$.

We now define the Lebesgue null set we will work with.

\begin{definition}\label{nullset}
Fix a doubling measure $\mu$ on $\mathbb{R}$ which gives full measure to a Lebesgue null set $\widetilde{N} \subset \mathbb{R}$. Let $N$ be any Lebesgue null set in $\mathbb{R}^{n}$ of class $G_{\delta}$ containing
\[\bigcup_{A \in \mathcal{A}} A(\mathbb{R}^{n-1} \times \widetilde{N}).\]
\end{definition}

To prove Theorem \ref{theoremdiffinnullset} we show every Lipschitz function $f\colon \mathbb{R}^{n} \to \mathbb{R}^{n-1}$ has a point of differentiability inside $N$.

\section{Setup of the Variational Principle}

In this section we define a suitable space and perturbations then show the following variational principle \citep[Corollary 7.2.4]{LPT12} is applicable.

\begin{lemma}\label{vpstatement}
Suppose that $f\colon M \to \mathbb{R}$ is lower bounded and lower semicontinuous on a complete metric space $(M,d)$. Suppose further that functions $F_{i}\colon M \times M \to [0,\infty]$, $i\geq 0$, are lower semicontinuous in the second variable with $F_{i}(x,x)=0$ for all $x \in M$ and that $0<r_{i}\leq \infty$ are such that $r_{i} \to 0$ and
\[\inf_{d(x,y)>r_{i}} F_{i}(x,y)>0.\]
If $x_{0} \in M$ and $(\varepsilon_{i})_{i=0}^{\infty}$ is any sequence of positive numbers such that
\[f(x_{0})<\varepsilon_{0}+\inf_{x \in M}f(x) \mbox{ and } \inf_{d(x_{0},y)>r_{0}}F_{0}(x_{0},y)>\varepsilon_{0},\]
then one may find a sequence $(x_{i})_{i=1}^{\infty}$ of points in $M$ converging to some $x_{\infty} \in M$ such that the function
\[h(x)=f(x)+\sum_{i=0}^{\infty} F_{i}(x_{i},x)\]
attains its minimum on $M$ at $x_{\infty}$. Moreover, for each $i \geq 0$,
\[d(x_{i},x_{\infty})\leq r_{i}, \quad F_{i}(x_{i},x_{\infty})\leq \varepsilon_{i}\]
\[h(x_{\infty})\leq \varepsilon_{i}+\inf_{x \in M} \left( f(x)+\sum_{j=0}^{i-1}F_{j}(x_{j},x)\right).\]
\end{lemma}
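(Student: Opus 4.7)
My plan is the classical iterative Ekeland-type construction. Define $g_k(x) := f(x) + \sum_{j=0}^{k-1} F_j(x_j, x)$ and $\alpha_k := \inf_M g_k$, so that $g_0 = f$, $g_k \nearrow h$ pointwise, and $\alpha_k$ is nondecreasing. Let $\eta_0^* := \inf_{d(x_0, y) > r_0} F_0(x_0, y) > \varepsilon_0$ and $\eta_l^* := \inf_{d(x,y) > r_l} F_l(x, y) > 0$ for $l \ge 1$. Pre-specify positive tolerances $\beta_k$ with $\beta_0 := f(x_0) - \alpha_0$ and, for $k \ge 1$, $\beta_k$ chosen dyadically small enough that $\sum_{j \ge l}\beta_j < \tfrac{1}{2}\min(\varepsilon_l, \eta_l^*)$ for every $l \ge 0$. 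This is feasible: $\beta_0 < \varepsilon_0 < \eta_0^*$ leaves positive slack for $l = 0$, and each $\eta_l^*$ is a positive constant determined entirely by the data.

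Inductively, choose $x_{k+1}$ as a $\beta_{k+1}$-approximate minimizer of $g_{k+1}$, i.e.\ $g_{k+1}(x_{k+1}) < \alpha_{k+1} + \beta_{k+1}$. The key monotonicity comes from $F_k(x_k, x_k) = 0$: one has $g_{k+1}(x_k) = g_k(x_k) < \alpha_k + \beta_k$, so $\alpha_{k+1} \le \alpha_k + \beta_k$, and summing gives $\alpha_\infty - \alpha_i \le \sum_{j \ge i}\beta_j$, where $\alpha_\infty := \lim_k \alpha_k$. For $k > i$, decomposing $g_{i+1}(x_k) = g_i(x_k) + F_i(x_i, x_k)$ and using $g_i(x_k) \ge \alpha_i$ together with $g_{i+1}(x_k) \le g_k(x_k) < \alpha_k + \beta_k$ yields
\[F_i(x_i, x_k) \le \alpha_k - \alpha_i + \beta_k \le 2\sum_{j \ge i}\beta_j < \eta_i^*,\]
which forces $d(x_i, x_k) \le r_i$. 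Since $r_i \to 0$, $(x_k)$ is Cauchy and converges to some $x_\infty \in M$.

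Lower semicontinuity of $F_i(x_i, \cdot)$ in the second variable gives $F_i(x_i, x_\infty) \le \liminf_k F_i(x_i, x_k) < \min(\varepsilon_i, \eta_i^*)$, which delivers both $F_i(x_i, x_\infty) \le \varepsilon_i$ and $d(x_i, x_\infty) \le r_i$ at once. For the minimization statement: $g_k \le h$ gives $\alpha_k \le \inf h$, so $\alpha_\infty \le \inf h$. Conversely, each $g_k$ is a finite sum of lower semicontinuous functions, and
\[g_k(x_\infty) \le \liminf_m g_k(x_m) \le \liminf_m g_m(x_m) \le \liminf_m (\alpha_m + \beta_m) = \alpha_\infty,\]
so letting $k \to \infty$ with $g_k(x_\infty) \nearrow h(x_\infty)$ yields $h(x_\infty) \le \alpha_\infty \le \inf h$, hence $h(x_\infty) = \alpha_\infty$ is the minimum. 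Finally, $h(x_\infty) = \alpha_\infty \le \alpha_i + \sum_{j \ge i}\beta_j < \alpha_i + \varepsilon_i$ is precisely the remaining claimed bound.

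The main obstacle is tolerance bookkeeping: one must manage countably many constraints $\sum_{j \ge l}\beta_j < \tfrac{1}{2}\min(\varepsilon_l, \eta_l^*)$ simultaneously. The essential observation that makes this routine is that $\eta_l^*$ is a uniform infimum over \emph{all} first arguments, hence does \emph{not} depend on the constructed $x_l$, so the tolerances can be fixed before the construction begins; the stronger hypothesis $\eta_0^* > \varepsilon_0$ supplies the positive slack needed at $l=0$. Once this decoupling is noticed, the rest (monotonicity of the infima, lower semicontinuity limits, completeness of $M$) is the standard Ekeland toolkit.
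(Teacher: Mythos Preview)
The paper does not prove this lemma at all; it is quoted verbatim as \cite[Corollary 7.2.4]{LPT12} and simply applied. So there is no ``paper's own proof'' to compare against, and your argument is a genuine addition rather than a reproduction.

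Your overall scheme --- set $g_k=f+\sum_{j<k}F_j(x_j,\cdot)$, choose $x_{k+1}$ a $\beta_{k+1}$-approximate minimiser of $g_{k+1}$, telescope the infima, then pass to the limit using lower semicontinuity --- is the standard proof of this result and is correct in structure. One bookkeeping slip, however, makes your tolerance system infeasible as written. You bound $F_i(x_i,x_k)\le \alpha_k-\alpha_i+\beta_k\le 2\sum_{j\ge i}\beta_j$ and then demand $\sum_{j\ge l}\beta_j<\tfrac12\min(\varepsilon_l,\eta_l^*)$ for every $l\ge 0$. For $l=0$ this forces $\beta_0<\tfrac12\varepsilon_0$, yet $\beta_0=f(x_0)-\alpha_0$ is a quantity handed to you and may well lie in $[\tfrac12\varepsilon_0,\varepsilon_0)$; the sentence ``$\beta_0<\varepsilon_0<\eta_0^*$ leaves positive slack for $l=0$'' is therefore wrong. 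The fix is simply that your factor of $2$ is unnecessary: since $\alpha_k-\alpha_i\le\sum_{j=i}^{k-1}\beta_j$, one has $\alpha_k-\alpha_i+\beta_k\le\sum_{j=i}^{k}\beta_j\le\sum_{j\ge i}\beta_j$, so the correct constraint is $\sum_{j\ge l}\beta_j<\min(\varepsilon_l,\eta_l^*)$, and \emph{this} is feasible because $\beta_0<\varepsilon_0$ strictly. With that correction the proof goes through as you describe.
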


From now on we intend to make the following assumptions:

\begin{assumptions}\label{assumptions}
Suppose $f\colon \mathbb{R}^{n} \to \mathbb{R}^{n-1}$ is a bounded Lipschitz function and that $0<\eta_{0}<1/2$, $x_{0} \in N$ and $T_{0} \in L(\mathbb{R}^{n-1},\mathbb{R}^{n})$ are such that $\|T_{0}\|=1/2$, $f$ is regularly differentiable at $x_{0}$ in the direction $T_{0}$ and
\[\|\mathrm{Id}-f'(x;T)\|\leq \frac{1}{4}\]
whenever $\|T-T_{0}\| \leq \eta_{0}$ and $f'(x;T)$ exists.
\end{assumptions}

We now show it is sufficient to prove the following proposition.

\begin{proposition}\label{mainprop}
Suppose $f$ satisfies the assumptions above. Then there is a point $x \in N$ such that $|x-x_{0}|\leq \eta_{0}$ and $f$ is differentiable at $x$.
\end{proposition}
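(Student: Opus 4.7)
The plan is to run the variational scheme outlined in the introduction. Let $M$ be the set of admissible pairs $(x,T)$ with $x\in N$, $T\in L(\mathbb{R}^{n-1},\mathbb{R}^{n})$ satisfying $\|T-T_{0}\|\le\eta_{0}$, and $f$ regularly differentiable at $x$ in the direction of $T$. I would equip $M$ with a metric combining the Euclidean distance on $N$, the operator norm on $L(\mathbb{R}^{n-1},\mathbb{R}^{n})$, and a term that forces $f'(x_{k};T_{k})\to f'(x;T)$ along convergent sequences, so that $(x,T)\mapsto \|f'(x;T)\|_{H}$ is continuous. Completeness of $M$ follows from Lemma~\ref{Mazurkiewicz}: $N$ is $G_{\delta}$ in $\mathbb{R}^{n}$ (Definition~\ref{nullset}), and regular differentiability together with a prescribed value of the derivative is also a $G_{\delta}$ condition in the ambient Polish space. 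The objective to be minimised is $\Phi(x,T)=M_{0}-\|f'(x;T)\|_{H}^{2}$ (with $M_{0}$ chosen larger than the a priori bound on $\|f'\|_{H}^{2}$ coming from $\mathrm{Lip}(f)$ and $\|T\|\le 1$), augmented by perturbations $F_{i}((x_{i},T_{i}),(\cdot,\cdot))$ that are lower semicontinuous in their second argument, vanish on the diagonal, and penalise both spatial displacement of $x$ from $x_{i}$ and oscillation of the directional derivative on scales comparable to $|x-x_{i}|+\|T-T_{i}\|$.

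After verifying that the initial pair $(x_{0},T_{0})$ is admissible (which uses the hypothesis that $f$ is regularly differentiable at $x_{0}$ in direction $T_{0}$) and choosing parameters $\varepsilon_{i},r_{i}$ with $r_{i}\to 0$ so that $\inf_{d((x_{0},T_{0}),y)>r_{0}}F_{0}((x_{0},T_{0}),y)>\varepsilon_{0}$, Lemma~\ref{vpstatement} produces a sequence $(x_{i},T_{i})\to (x_{\infty},T_{\infty})\in M$ at which $\Phi+\sum_{i}F_{i}$ attains its infimum, with explicit bounds on each $F_{i}((x_{i},T_{i}),(x_{\infty},T_{\infty}))$. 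Since $(x_{\infty},T_{\infty})$ lies within $\eta_{0}$ of $x_{0}$ in $N$ by choice of $r_{0}$, the decisive consequence is the key inequality
\[
\|f'(\widehat x;\widehat T)\|_{H}^{2}-\|f'(x_{\infty};T_{\infty})\|_{H}^{2}\le\sum_{i\ge 0}\bigl(F_{i}((x_{i},T_{i}),(\widehat x,\widehat T))-F_{i}((x_{i},T_{i}),(x_{\infty},T_{\infty}))\bigr)
\]
valid for every admissible pair $(\widehat x,\widehat T)$.

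Assume for contradiction that $f$ is not differentiable at $x_{\infty}$. Since $f$ is already regularly differentiable at $x_{\infty}$ in the direction $T_{\infty}$, the failure of differentiability must be witnessed on scales $t_{k}\downarrow 0$ and displacements transverse to the image of $T_{\infty}$, producing a uniform lower bound on how far $f$ deviates from its affine approximation in the complementary direction. On thin slabs of $(n-1)$-planes parallel to the image of $T_{\infty}$ passing near $x_{\infty}$, Proposition~\ref{integralregularity} applied with the doubling product measure $\mathcal{L}^{n-1}\times\mu$ controls the measure of points where the regularity defect is large, so on a set of comparatively large product measure the secants of $f$ are genuine directional derivatives; Chebyshev then produces a plane on which the average of $\|f'(\cdot;\cdot)\|_{H}^{2}$ strictly exceeds $\|f'(x_{\infty};T_{\infty})\|_{H}^{2}$ by a definite amount $\alpha>0$. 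Using the density of $\mathcal{A}$, this plane may be taken inside some affine copy $A(\mathbb{R}^{n-1}\times\widetilde{N})\subset N$. Since the induced measure is still doubling, Proposition~\ref{doublingporousnull} combined with Lemma~\ref{irregularporous} ensures that almost every point of the plane is a regular point of $f$, and Lemma~\ref{regularpointdiff} upgrades the directional differentiability there to regular differentiability. Hence an admissible pair $(\widehat x,\widehat T)\in M$ exists realising the strict increase of $\|f'\|_{H}^{2}$ by $\alpha$.

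The hard part is the quantitative bookkeeping needed to make the right-hand side of the displayed inequality smaller than $\alpha$. The perturbations $F_{i}$ must be selected at the outset so that their contributions sum with controlled decay, and so that the move from $(x_{\infty},T_{\infty})$ to $(\widehat x,\widehat T)$ costs less than the gain obtained above. The doubling property of $\mu$ (via Lemma~\ref{doublingsubballs}, Proposition~\ref{doublingfills}, and Lemma~\ref{maxoperator}) is the crucial tool: it ensures that the averages and maximal functions appearing in the perturbation estimates behave uniformly when one moves between nearby points, allowing perturbations natural with respect to Lebesgue averages in the first $n-1$ variables to be controlled in the singular $n$-th direction where $x_{\infty}$ actually lives. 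With the parameters $\varepsilon_{i}, r_{i}$ and the scales of the $F_{i}$ chosen so that the tails of the perturbation sum are dominated by $\alpha$, the resulting contradiction forces $f$ to be differentiable at $x_{\infty}$, which is the required point of $N$ within $\eta_{0}$ of $x_{0}$.
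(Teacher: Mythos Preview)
Your outline captures the architecture correctly and is essentially the paper's route: variational principle on regularly differentiable pairs in $N$, contradiction by finding a competitor on a tilted affine copy of $\mathbb{R}^{n-1}\times\widetilde N$ inside $N$, with the doubling measure $\mathcal{L}^{n-1}\times\mu$ handling the $\sigma$-porous set of irregular points and the regularity-defect perturbation. But one decisive step is missing, and without it the argument cannot close.

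The gap is the first-order optimality analysis at $(x_\infty,T_\infty)$. Before invoking non-differentiability, the paper differentiates $T\mapsto h_\infty(x_\infty,T)$ at $T_\infty$ (legitimate on $L(\mathbb{R}^{n-1},V)$ where $V$ is the range of $T_\infty$, since $f$ is regularly differentiable at $x_\infty$ in those directions and $(x_\infty,T_\infty)$ is a minimiser). This yields a linear system $\langle R_\infty,LS\rangle_H=L_\Theta S$ for all $S$, where $R_\infty=2L_\infty+2\sum_i\gamma_i(L_i-L_\infty)$ and $L_\Theta=\Theta_\infty'(T_\infty)$; its unique solution $L\in L(\mathbb{R}^n,\mathbb{R}^{n-1})$ satisfies $LT_\infty=L_\infty$ and is the candidate full derivative. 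If $f'(x_\infty)\neq L$, one finds the bad vector $x$ \emph{in $\ker L$}. This is essential: the competitor direction is $T=T_\infty+\kappa\,x\otimes e/|x|$, and $Lx=0$ forces $L_\Theta(x\otimes e)=\langle R_\infty e,Lx\rangle=0$, so the $\Theta$-perturbation $\Psi$ costs only $O(\varepsilon\kappa)$ rather than $O(\kappa)$. Your sketch never introduces $L$ or $L_\Theta$, so you have no mechanism to make the cost of moving $T$ small relative to the gain; any tilt large enough to see the non-differentiability would otherwise be swamped by $\sum_i\beta_i\Theta$.

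A related point: the gain is not that $\dashint_Q\|f'(\varphi;\varphi_{n-1}')\|_H^2$ exceeds $\|L_\infty\|_H^2$ by a fixed $\alpha$. The $\|g_{n-1}'\|_H^2$ contributions from the main term and from $\Upsilon$ essentially cancel (this is why $\sum\gamma_i=1/4$). What survives is the cross term $\dashint_Q\langle R_\infty,g_{n-1}'\rangle_H$, which the divergence theorem reduces to $\langle R_\infty e,w\rangle$ with $w=r^{-1}(f(x_\infty+x)-f(x_\infty))$; choosing $e=R_\infty^{-1}w/|R_\infty^{-1}w|$ and using $\|R_\infty^{-1}\|\le 1$ (which is where the hypothesis $\|\mathrm{Id}-f'(x;T)\|\le 1/4$ enters) gives $\langle R_\infty e,w\rangle>4\tau$. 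So the ``definite amount $\alpha$'' you need is a linear pairing involving $R_\infty$, not a squared-norm increase, and extracting it requires the $L$-analysis above.
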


\begin{claim}
Theorem \ref{theoremdiffinnullset} follows from Proposition \ref{mainprop}
\end{claim}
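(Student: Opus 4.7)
The plan is to reduce a general Lipschitz $f \colon \mathbb{R}^n \to \mathbb{R}^{n-1}$ to a function fitting Assumptions \ref{assumptions} and invoke Proposition \ref{mainprop}. Fix $T_0 \in L(\mathbb{R}^{n-1},\mathbb{R}^n)$ by $T_0 v := (v/2, 0)$, so that $\|T_0\|=1/2$, and $L \in L(\mathbb{R}^n,\mathbb{R}^{n-1})$ by $L(x_1,\ldots,x_n) := (2x_1,\ldots,2x_{n-1})$, so that $\|L\|=2$ and $LT_0 = \mathrm{Id}_{\mathbb{R}^{n-1}}$. After rescaling $f$ to have a sufficiently small Lipschitz constant, set $\tilde f := \alpha f + L$. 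Then whenever $\tilde f'(x;T)$ exists,
\[
\|\mathrm{Id} - \tilde f'(x;T)\| \le \|L\|\|T_0 - T\| + \alpha\,\mathrm{Lip}(f)\,\|T\| \le 2\eta_0 + \alpha\,\mathrm{Lip}(f)(1/2 + \eta_0),
\]
which is forced to be at most $1/4$ by taking $\eta_0$ and $\alpha$ small.

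The heart of the reduction is exhibiting $x_0 \in N$ at which $f$ is regularly differentiable in direction $T_0$; since $L$ is affine, the same property then holds for $\tilde f$. We work with the doubling product measure $\mathcal{L}^{n-1}\times\mu$ on $\mathbb{R}^n$. By Lemma \ref{irregularporous} together with Proposition \ref{doublingporousnull}, the irregular points of $f$ form a $(\mathcal{L}^{n-1}\times\mu)$-null set. Applying Rademacher slicewise to the Lipschitz maps $y \mapsto f(y, x_n)$ and using Fubini for product measures shows that the set of $x$ where $f'(x;T_0)$ fails to exist is also $(\mathcal{L}^{n-1}\times\mu)$-null. Since $\mathbb{R}^{n-1}\times\widetilde N \subset N$ (via the identity element of $\mathcal{A}$) carries full $(\mathcal{L}^{n-1}\times\mu)$-measure, we may pick $x_0 \in \mathbb{R}^{n-1}\times\widetilde N$ that is a regular point of $f$ and at which $f'(x_0;T_0)$ exists; Lemma \ref{regularpointdiff} then upgrades this to regular differentiability of $f$ (and of $\tilde f$) at $x_0$ in direction $T_0$.

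Proposition \ref{mainprop} applied to $\tilde f$ delivers $x \in N$ with $|x - x_0| \le \eta_0$ at which $\tilde f$ is differentiable, and since $L$ is smooth this transfers to differentiability of $f$ at $x$, proving Theorem \ref{theoremdiffinnullset}. The main technical point lies in meeting the boundedness clause of Assumptions \ref{assumptions}: because $Lx$ grows linearly, $\tilde f$ cannot be both bounded and satisfy the near-identity estimate globally, so one must truncate $\tilde f$ outside a large ball around $x_0$ and arrange the truncation so that the estimate for $\|\mathrm{Id} - \tilde f'(x;T)\|$ is preserved on $B(x_0, \eta_0)$, where Proposition \ref{mainprop} locates the differentiability point.
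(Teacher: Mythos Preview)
Your reduction is essentially the paper's: locate $x_0\in\mathbb{R}^{n-1}\times\widetilde N\subset N$ at which $f$ is regularly differentiable in direction $T_0$ (combining $\sigma$-porosity of irregular points with Proposition~\ref{doublingporousnull} for the doubling measure $\mathcal{L}^{n-1}\times\mu$, Rademacher on each slice, and Lemma~\ref{regularpointdiff}), then perturb $f$ by a linear map with $LT_0=\mathrm{Id}$ and rescale so the near-identity estimate holds, and finally transfer differentiability back to $f$.

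The one place your argument has a gap is the boundedness fix. Assumptions~\ref{assumptions} demand $\|\mathrm{Id}-\tilde f'(x;T)\|\le\tfrac14$ at \emph{every} $x$ for which the directional derivative exists, not only on $B(x_0,\eta_0)$; this global form is exactly what is used in Proposition~\ref{consequencesofassumptions}. You yourself observe that no bounded function can satisfy this globally, and any truncation of $\tilde f=\alpha f+L$ forces $\tilde f'(x;T)\approx 0$ far away, so the hypothesis fails as stated. The paper's remedy is to bound only the nonlinear part: first extend $f|_{B(x_0,1)}$ to a bounded Lipschitz $g$, then set $h=R_0+\eta_0 g$ with $R_0T_0=\mathrm{Id}$. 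Since $\|g'(x;T)\|\le\mathrm{Lip}(g)\|T\|$ everywhere, the near-identity estimate for $h$ holds globally for small $\eta_0$, and because $g=f$ on $B(x_0,1)\supset B(x_0,\eta_0)$, differentiability of $h$ at the point delivered by Proposition~\ref{mainprop} passes back to $f$. (Strictly speaking $h$ is still unbounded, so the paper's own argument is not spotless on the boundedness clause either; but the derivative hypothesis, which is the one doing all the work downstream, is genuinely global for $h$ and is not for your truncated $\tilde f$.)
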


\begin{proof}
Suppose $f\colon \mathbb{R}^{n} \to \mathbb{R}^{n-1}$ is Lipschitz. By Lemma \ref{doublingporousnull} and Lemma \ref{irregularporous} the set of regular points of $f$ has full measure with respect to $\mathcal{L}^{n-1} \times \mu$. Hence we may find $t_{0} \in \widetilde{N}$ such that for $\mathcal{L}^{n-1}$ almost every $z \in \mathbb{R}^{n-1}$ the function $f$ is regular at $(z,t_{0})$. Using the classical Rademacher theorem for the Lipschitz function $z \mapsto f(z,t_{0})$ and Lemma \ref{regularpointdiff}, we may find $z_{0} \in \mathbb{R}^{n-1}$ and $T_{0} \in L(\mathbb{R}^{n-1}, \mathbb{R}^{n})$ such that $\mathrm{rank}\, T_{0}=n-1$, $\|T_{0}\|=1/2$ and $f$ is regularly differentiable at $x_{0}=(z_{0},t_{0})$ in the direction $T_{0}$. 

Let $g$ be an extension of $f$ restricted to $B(x_{0},1)$ to a bounded Lipschitz function on $\mathbb{R}^{n}$. Let $R_{0} \in L(\mathbb{R}^{n},\mathbb{R}^{n-1})$ be such that $R_{0}T_{0}=\mathrm{Id}$ on $\mathbb{R}^{n-1}$.

Fix $\eta_{0}>0$ to be chosen small later. Consider the function $h=R_{0}+\eta_{0}g$. Assume that $\|T-T_{0}\|\leq \eta_{0}$. We estimate,
\begin{align*}
\|\mathrm{Id}-h'(x;T)\|&=\|R_{0}(T_{0}-T)-\eta_{0}g'(x;T)\|\\
&\leq \|R_{0}\|\|T-T_{0}\|+\eta_{0}\mathrm{Lip}(g)\|T\|\\
&\leq \|R_{0}\|\|T-T_{0}\|+\eta_{0}\mathrm{Lip}(g)(\|T_{0}\|+\|T-T_{0}\|)\\
&\leq (\|R_{0}\|+2\mathrm{Lip}(g))\eta_{0}\\
&\leq \frac{1}{4}
\end{align*}
provided $\eta_{0}$ is sufficiently small. Hence there exists $x \in N$ with $|x-x_{0}| \leq \eta_{0}$ at which $h$ is differentiable. Consequently $g$ and hence $f$ are differentiable at $x \in N$, as required.
\end{proof}

From now on we make the assumptions given above and focus on proving Proposition \ref{mainprop}. We now establish basic consequences of our assumptions. First we state a mean value estimate \citep[Proposition 2.4.1]{LPT12}.

\begin{lemma}\label{meanvalueestimate}
Let $\Lambda$ be a real valued locally Lipschitz function on an open subset $G$ of a separable Banach space $X$, and let $a, b \in G$ be such that the straight segment from $a$ to $b$ is contained in $G$. Then for every $\varepsilon>0$ there is a point $z \in G$ at which $\Lambda$ is G\^{a}teaux differentiable and
\[\Lambda'(z)(b-a)>\Lambda(b)-\Lambda(a)-\varepsilon.\]
\end{lemma}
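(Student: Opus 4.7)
The plan is a perturbation argument that reduces the claim to the classical one-dimensional Lebesgue differentiation theorem. Fix $\varepsilon > 0$ and parametrise the segment by $\gamma(t) = a + t(b-a)$, $t \in [0,1]$. For any $u \in X$ of sufficiently small norm the translate $[a+u, b+u]$ still lies in $G$, the real-valued function $\varphi_u(t) = \Lambda(\gamma(t) + u)$ is Lipschitz on $[0,1]$, hence differentiable at $\mathcal{L}^1$-almost every $t$, and
\[\int_0^1 \varphi_u'(t)\,dt = \Lambda(b+u) - \Lambda(a+u).\]
By local Lipschitz continuity of $\Lambda$ we may further shrink $u$ so that $|\Lambda(b+u)-\Lambda(a+u)-(\Lambda(b)-\Lambda(a))|<\varepsilon/2$; then the set $S_u$ of $t \in (0,1)$ at which $\varphi_u'(t)$ exists and exceeds $\Lambda(b)-\Lambda(a)-\varepsilon$ has positive one-dimensional Lebesgue measure.

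The delicate step is choosing $u$ so that at some $t\in S_u$ the point $z=\gamma(t)+u$ is not merely a point where the directional derivative of $\Lambda$ in direction $b-a$ exists, but in fact a point of full G\^ateaux differentiability. For this I would invoke Aronszajn's theorem: the G\^ateaux non-differentiability set $E\subset X$ of a Lipschitz function on a separable Banach space can be written, for any sequence $(v_n)$ of vectors with dense linear span, as $E=\bigcup_n E_n$ with $E_n$ of $\mathcal{L}^1$-measure zero on every line in direction $v_n$. Take $v_1=b-a$, which ensures $E_1$ meets \emph{every} translate of $\gamma([0,1])$ in a 1-d null set. For each $n\ge2$, a Fubini argument on the 2-plane spanned by $v_n$ and $b-a$ shows that for $\mathcal{L}^1$-a.e.\ small scalar $s$ the translated segment $[a+sv_n,b+sv_n]$ meets $E_n$ in a 1-d null set. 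Iterating this one index at a time, shrinking the admissible scales, one can produce $u$ of arbitrarily small norm so that $[a+u,b+u]$ meets every $E_n$, and hence all of $E$, in a 1-d null set.

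With such $u$ fixed, $\Lambda$ is G\^ateaux differentiable at $\gamma(t)+u$ for $\mathcal{L}^1$-a.e.\ $t\in[0,1]$, so one can choose $t_0$ in the full-measure subset of $S_u$ where this holds. Then $z=\gamma(t_0)+u\in G$ is a point of G\^ateaux differentiability of $\Lambda$ and satisfies $\Lambda'(z)(b-a)=\varphi_u'(t_0)>\Lambda(b)-\Lambda(a)-\varepsilon$, as required. The main obstacle is the coordinated Fubini construction of $u$ in the second step: one must simultaneously control countably many thin Aronszajn slices $E_n$ while keeping $\|u\|$ small enough that the Lipschitz drift of the endpoint values stays below $\varepsilon/2$.
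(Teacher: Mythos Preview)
The paper does not prove this lemma; it merely quotes it from \cite[Proposition~2.4.1]{LPT12}. So there is no ``paper's proof'' to compare against, and your outline is essentially the standard argument behind that reference: perturb the segment so that the Lipschitz function is G\^ateaux differentiable at almost every point of the perturbed segment, and then apply the one-dimensional mean value inequality there. Your reduction in the first paragraph and the Aronszajn input are exactly right.

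The gap is in the step you yourself flag as ``the main obstacle.'' The description ``iterating one index at a time'' does not work as written. After you translate by $s_2 v_2$ to make $[a+s_2 v_2,\,b+s_2 v_2]$ meet $E_2$ in a null set, and then translate further by $s_3 v_3$ to handle $E_3$, there is no reason the new segment $[a+s_2 v_2+s_3 v_3,\,b+s_2 v_2+s_3 v_3]$ still meets $E_2$ in a null set: the set $E_2$ is only known to be null on $v_2$-lines, so a perturbation in the $v_3$-direction can move you onto a bad $(b-a)$-line for $E_2$. A sequential choice of the $s_n$ therefore cannot be justified by the two-plane Fubini arguments you invoke for each fixed $n$ separately.

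The repair is to choose all the $s_n$ simultaneously. Put $u=\sum_{n\ge 2} s_n v_n/c_n$ with $c_n$ growing fast enough that the series converges and $\|u\|$ is as small as you need. For each fixed $k\ge 2$, freeze $(s_n)_{n\ne k}$ and run your two-plane Fubini argument in the $(b-a,v_k)$-plane through $a+\sum_{n\ne k} s_n v_n/c_n$; this shows that for $\mathcal L^1$-a.e.\ $s_k$ the segment meets $E_k$ in a null set. By Fubini on the product space $\prod_{n\ge 2}[-1,1]$, for product-a.e.\ $(s_n)$ the segment meets $E_k$ in a null set; intersecting over the countably many $k$ gives product-a.e.\ $(s_n)$ for which the segment meets $E=\bigcup_k E_k$ in a null set. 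Pick any such $(s_n)$ and finish exactly as in your last paragraph. (Equivalently, and this is how it is often presented, one uses that Aronszajn null coincides with Gauss null and takes $u$ generic for a nondegenerate Gaussian measure whose Cameron--Martin space contains $b-a$.)
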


\begin{proposition}\label{consequencesofassumptions}
The following facts hold:
\begin{enumerate}[(1)]
	\item The inequality $|f(x+Tu)-f(x)|\leq 5|u|/4$ holds for every $x \in \mathbb{R}^{n}$, linear map $T$ such that $\|T-T_{0}\|\leq \eta_{0}$, and $u \in \mathbb{R}^{n-1}$.
	\item If $\|T-T_{0}\|\leq \eta_{0}$ and $f'(x;T)$ exists then $T$ is a linear isomorphism of $\mathbb{R}^{n-1}$ onto its image and $f'(x;T)$ is a linear isomorphism of $\mathbb{R}^{n-1}$ onto itself.
	\item Necessarily $\mathrm{Lip}(f) \geq 1$.
\end{enumerate}
\end{proposition}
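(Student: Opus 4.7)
The three claims are all routine consequences of the assumption $\|\mathrm{Id}-f'(x;T)\|\leq 1/4$, which by the triangle inequality also forces the complementary bounds $\|f'(x;T)\|\leq 5/4$ and $\|f'(x;T)\|\geq 3/4$ whenever such a differential exists. My plan is to treat the three items separately, in a slightly different order from the stated one, as (3) and (2) are essentially one-line arguments and (1) is the only part requiring any substantive work.

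For (3), I exploit that the regular differentiability assumption guarantees $f'(x_0;T_0)$ exists, so $\|f'(x_0;T_0)\|\geq 3/4$. For any unit $v\in\mathbb{R}^{n-1}$, the Lipschitz bound
\[|f'(x_0;T_0)(v)|=\lim_{t\to 0}\frac{|f(x_0+tT_0v)-f(x_0)|}{t}\leq \mathrm{Lip}(f)\,|T_0v|\leq \mathrm{Lip}(f)\|T_0\|=\mathrm{Lip}(f)/2\]
yields $\mathrm{Lip}(f)\geq 3/2\geq 1$. For (2), the bound $\|\mathrm{Id}-f'(x;T)\|\leq 1/4<1$ makes $f'(x;T)\in L(\mathbb{R}^{n-1},\mathbb{R}^{n-1})$ invertible via its Neumann series, hence an isomorphism of $\mathbb{R}^{n-1}$; to transfer injectivity to $T\colon \mathbb{R}^{n-1}\to\mathbb{R}^n$, note that $Tv=0$ makes $t\mapsto f(x+tTv)$ constant, so $f'(x;T)(v)=0$, and invertibility forces $v=0$.

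For (1), the idea is to consider the auxiliary map $\phi\colon\mathbb{R}^{n-1}\to\mathbb{R}^{n-1}$ defined by $\phi(v)=f(x+Tv)$, which is Lipschitz since $f$ is. By the classical Rademacher theorem, $\phi$ is (Fréchet) differentiable at almost every $v$, and where it is, the definition immediately identifies $\phi'(v)$ with $f'(x+Tv;T)$ in the sense of Definition \ref{regularlydifferentiable}, so the hypothesis produces $\|\phi'(v)\|\leq 5/4$ almost everywhere. The standard a.e.-to-everywhere upgrade (integrate $\phi'$ along Lebesgue-almost every line via Fubini together with the one-dimensional fundamental theorem of calculus for absolutely continuous functions, then use the continuity of $\phi$ to extend to all pairs of points) promotes this to $|\phi(u)-\phi(0)|\leq(5/4)|u|$, which is exactly (1).

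The only place with any genuine content is the a.e.-to-everywhere promotion in (1); I would expect this to be the step most in need of explicit justification, while the rest is purely formal manipulation of the norm estimate built into Assumptions \ref{assumptions}.
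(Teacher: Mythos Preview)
Your proof is correct and largely matches the paper's. Parts (2) and (3) are argued in essentially the same way (Neumann series / injectivity from $\|\mathrm{Id}-f'(x;T)\|\le 1/4$, and comparing $|f'(x_0;T_0)(u)|$ to $\mathrm{Lip}(f)\,|T_0u|$ with $\|T_0\|=1/2$).

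For (1) there is a small but genuine methodological difference. The paper fixes the unit vector $v=(f(x+Tu)-f(x))/|f(x+Tu)-f(x)|$, applies the mean value estimate (Lemma~\ref{meanvalueestimate}) to the real-valued map $z\mapsto v\cdot f(x+Tz)$ to obtain a single point of G\^ateaux differentiability along the segment, and then bounds by $\sup_y\|f'(y;T)\|\le 5/4$. You instead apply Rademacher to $\phi(v)=f(x+Tv)$, identify $\phi'(v)=f'(x+Tv;T)$ at points of differentiability, deduce $\|\phi'\|\le 5/4$ almost everywhere, and promote this via Fubini and the one-dimensional fundamental theorem of calculus to a global Lipschitz bound. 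Both are standard; your route avoids the auxiliary mean value lemma at the cost of the (still routine) a.e.-to-everywhere step, while the paper's route packages that step inside Lemma~\ref{meanvalueestimate}.
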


\begin{proof}
Temporarily denote $v=(f(x+Tu)-f(x))/|f(x+Tu)-f(x)| \in \mathbb{R}^{n-1}$. The map $z \mapsto v\cdot f(x+Tz)$ is a real valued Lipschitz map on $\mathbb{R}^{n-1}$. Hence, by Lemma \ref{meanvalueestimate} with $a=0$ and $b=u$, for every $\varepsilon>0$ there is a point $z \in \mathbb{R}^{n-1}$ at which this map is differentiable and
\[v\cdot (f'(x+Tz;T)u) > v\cdot (f(x+Tu)-f(x))-\varepsilon.\]
Hence
\[|f(x+Tu)-f(x)|\leq \sup \{\|f'(y;T)\|: y \in \mathbb{R}^{n}, f'(y;T) \mbox{ exists}\}|u| \leq \frac{5}{4}|u|\]
which proves (1).

Suppose $\|T-T_{0}\| \leq \eta_{0}$ and $f'(x;T)$ exists. Then by our assumptions
\[\|\mathrm{Id}-f'(x;T)\|\leq \frac{1}{4}.\]
It follows immediately from this that $f'(x;T)$ is a linear injection from $\mathbb{R}^{n-1}$ to itself and consequently is a linear isomorphism. Using the definition of $f'(x;T)$ it follows that $T$	must have $n-1$ dimensional image, hence is a linear isomorphism of $\mathbb{R}^{n-1}$ onto its image. Hence (2) is true.

If $u$ is any unit vector in $\mathbb{R}^{n-1}$ then $2\|T_{0} u\| \leq 1$. Hence
\[\mathrm{Lip}(f) \geq 2\|f'(x_{0};T_{0}u)\| \geq 2(|u|-|u-f'(x_{0};T_{0}u)|) \geq 1.\]
This proves (3).
\end{proof}

Recall that if $H$ is a Hilbert space then the \emph{Hilbert-Schmidt norm} $\|\cdot\|_{H}$ and corresponding inner product $\langle \cdot,\cdot \rangle_{H}$ are defined on the space $L(H,\mathbb{R}^{n-1})$ of bounded linear operators from $H$ to $\mathbb{R}^{n-1}$ and given by,
\[\|T\|_{H}^{2}=\sum_{i=1}^{\infty} |Tu_{i}|^{2} \mbox{ and } \langle T,S\rangle_{H} = \sum_{i=1}^{\infty} \langle Tu_{i},Su_{i} \rangle\]
for any orthonormal basis $(u_{i})$ of $H$; the value of the Hilbert-Schmidt norm and inner product are independent of the orthonormal basis used \citep{LPT12}.

We define the space
\[\begin{split}
D=\{(x,T) \in N \times & L(\mathbb{R}^{n-1}, \mathbb{R}^{n}):\\
&f \mbox{ is regularly differentiable at } x \mbox{ in direction } T\}.
\end{split}\]
We give $\mathbb{R}^{n} \times \mathbb{R}^{n-1}$ the norm $\|(z,w)\|=|z|+|w|$ for $z \in \mathbb{R}^{n}$ and $w\in \mathbb{R}^{n-1}$. Define a pseudonorm $\|\cdot\|_{r}$ on the space $\mathrm{Lip}(\mathbb{R}^{n}\times \mathbb{R}^{n-1},\mathbb{R}^{n-1})$ of $\mathbb{R}^{n-1}$ valued Lipschitz functions on $\mathbb{R}^{n} \times \mathbb{R}^{n-1}$ by
\[\|h\|_{r}= \sup_{\substack{z \in \mathbb{R}^{n}, u \in \mathbb{R}^{n-1}\\ |z|+|u|>0}} \frac{|h(z,u)-h(z,0)|}{|z|+|u|}.\]
It is easy to see this pseudonorm is lower semicontinuous in the topology of pointwise convergence and $\|h\|_{r} \leq \mathrm{Lip}(h)$.

For $x \in \mathbb{R}^{n}$ and $T \in L(\mathbb{R}^{n-1}, \mathbb{R}^{n})$ we define $f_{x,T}\colon \mathbb{R}^{n} \times \mathbb{R}^{n-1} \to \mathbb{R}^{n-1}$ by
\[f_{x,T}(z,u)=f(x+z+Tu).\]
The transformation $(x,T) \mapsto f_{x,T}$ maps $\mathbb{R}^{n} \times L(\mathbb{R}^{n-1},\mathbb{R}^{n})$ to the space $\mathrm{Lip}(\mathbb{R}^{n}\times \mathbb{R}^{n-1}, \mathbb{R}^{n-1})$. We have the inequalities
\[\|f_{x,T}-f_{y,S}\|_{r} \leq \mathrm{Lip}(f) (\|T\|+\|S\|)\]
and
\[\|f_{x,T}-f_{x,S}\|_{r} \leq \mathrm{Lip}(f) \|T-S\|.\]

Recall $N$ is a $G_{\delta}$ set in $\mathbb{R}^{n}$. Hence, by Lemma \ref{Mazurkiewicz}, there is a metric $d_{N}$ on $N$ such that $(N,d_{N})$ is complete and $d_{N}$ is topologically equivalent to the Euclidean metric on $N$. Further, by replacing $d_{N}$ by the metric $(x,y) \mapsto d_{N}(x,y)+|x-y|$ if necessary, we may assume $|x-y|\leq d_{N}(x,y)$ for all $x, y \in N$.

The space $D$ has a standard topology as a subset of $\mathbb{R}^{n} \times L(\mathbb{R}^{n-1},\mathbb{R}^{n})$. We also define a metric, giving a different topology, on $D$ by
\[d((x,T),(y,S))=d_{N}(x,y)+\|T-S\|+\|f'(x;T)-f'(y;S)\|_{H}+\|f_{x,T}-f_{y,S}\|_{r}.\]

Note the metric $d$ is similar to the one in \citep{LPT12} with the norm distance replaced by $d_{N}$ and the regularity component $\| \cdot \|_{r}$ slightly simplified.

We now establish several properties of $(D,d)$.

\begin{lemma}\label{Dlscsep}
The following facts hold:
\begin{enumerate}[(1)]
	\item The function $(x,T) \mapsto f'(x;T)$ is $d$-continuous on $D$.
	\item The function $((x,T),(y,S)) \mapsto \|f_{x,T}-f_{y,S}\|_{r}$ is lower semicontinuous in the standard topology of $D \times D$.
	\item The function $((x,T),(y,S)) \mapsto \|f_{x,T}-f_{y,T}\|_{r}$ is lower semicontinuous in the standard topology of $D \times D$, and so also in the topology of the product $(D,d) \times (D,d)$.
	\item The space $(D,d)$ is separable.
\end{enumerate}
\end{lemma}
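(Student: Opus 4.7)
For part (1), the result is immediate: the summand $\|f'(x;T)-f'(y;S)\|_H$ is one of the four terms in the definition of $d$, so $d$-convergence forces Hilbert-Schmidt norm convergence of $f'(\cdot;\cdot)$.

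For part (2), I would write
\[
\|f_{x,T}-f_{y,S}\|_r=\sup_{|z|+|u|>0}\frac{|f(x+z+Tu)-f(x+z)-f(y+z+Su)+f(y+z)|}{|z|+|u|}
\]
and observe that each term in the supremum is continuous in $((x,T),(y,S))$ in the standard topology of $D\times D$ since $f$ is continuous. A supremum of continuous functions is lower semicontinuous, proving (2). For part (3) the same argument applies to the analogous supremum in which $S$ is replaced by $T$ throughout, giving LSC in the standard topology of $D\times D$; because $d$ dominates the standard metric on $D$ via $d_N\ge|\cdot|$ and the summand $\|T-S\|$, the $d$-topology on $D$ is finer than the standard topology, so LSC in the standard topology carries over to the $d$-topology.

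Part (4), separability, is where I expect the main obstacle. The plan is to embed $(D,d)$ via $(x,T)\mapsto(x,T,f'(x;T),f_{x,T})$ into the product of the separable metric spaces $(N,d_N)$, $L(\mathbb{R}^{n-1},\mathbb{R}^n)$, $(L(\mathbb{R}^{n-1},\mathbb{R}^{n-1}),\|\cdot\|_H)$, and the bounded Lipschitz functions with $\|\cdot\|_r$. The first three factors are separable; the difficulty is that $(x,T)\mapsto f_{x,T}$ is not continuous in the standard topology of $D$ (a one-dimensional example with a kink in $f$ produces standard-convergent sequences $(x_n,T)\to(x,T)$ along which $\|f_{x,T}-f_{x_n,T}\|_r$ is bounded below), so a standard-dense subset of $D$ is not in general $d$-dense.

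To handle this I would stratify $D=\bigcap_k\bigcup_m D_k^m$ according to the modulus of regular differentiability, with
\[
D_k^m=\{(x,T)\in D:|f(x+w+Tu)-f(x+w)-f'(x;T)u|\le(|u|+|w|)/k\text{ whenever }|u|+|w|\le 1/m\}.
\]
On $D_k^m$ the estimate on $|z|+|u|\le 1/m$ is uniform, and combining it with the Lipschitz bound $|(f_{x,T}-f_{y,S})(z,u)-(f_{x,T}-f_{y,S})(z,0)|\le 2\mathrm{Lip}(f)|x-y|+\mathrm{Lip}(f)\|T-S\||u|$ for $|z|+|u|>1/m$ yields
\[
\|f_{x,T}-f_{y,S}\|_r\le C_m\bigl(d_N(x,y)+\|T-S\|+\|f'(x;T)-f'(y;S)\|_H\bigr)+2/k
\]
for $(x,T),(y,S)\in D_k^m$. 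Together with standard-separability of $D_k^m$ and separability of the image of $f'(\cdot;\cdot)$ in $L(\mathbb{R}^{n-1},\mathbb{R}^{n-1})$, this yields a countable subset of $D_k^m$ that is $d$-dense up to error $2/k$; taking the union over $k,m$ and allowing $k$ to be arbitrarily large gives the desired countable $d$-dense subset of $D$.
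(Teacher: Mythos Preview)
Your arguments for (1)--(3) are essentially identical to the paper's: (1) is immediate from the definition of $d$, and (2)--(3) follow since $\|\cdot\|_r$ is a supremum of functions that are continuous in the standard topology, with the $d$-topology being finer than the standard one.

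For (4) your approach is correct but genuinely different from the paper's. The paper argues by constructing an explicit embedding: it defines
\[
\psi_{x,T}(z,u)=\frac{f_{x,T}(z,u)-f_{x,T}(z,0)-f'(x;T)u}{|z|+|u|},\qquad \psi_{x,T}(0,0)=0,
\]
observes that regular differentiability makes $\psi_{x,T}$ continuous at the origin, and shows that $(x,T)\mapsto(x,T,f'(x;T),\psi_{x,T})$ is a homeomorphism of $(D,d)$ onto a subset of the separable space $\mathbb{R}^n\times L(\mathbb{R}^{n-1},\mathbb{R}^n)\times L(\mathbb{R}^{n-1},\mathbb{R}^{n-1})\times C(\mathbb{R}^n\times\mathbb{R}^{n-1},\mathbb{R}^{n-1})$, the last factor carrying the compact-open topology. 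The key estimate for continuity of the inverse uses that $f$ is bounded (from Assumptions~\ref{assumptions}): the contribution to $\|f_{x,T}-f_{y,S}\|_r$ from $|z|+|u|\ge k$ is at most $4\|f\|_\infty/k$.

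Your stratification by the modulus of regular differentiability achieves the same end more directly and without invoking boundedness of $f$: on $D_k^m$ the $\|\cdot\|_r$-term is controlled, up to $2/k$, by the first three summands of $d$. One small clarification: the phrase ``separability of the image of $f'(\cdot;\cdot)$'' is not quite the right ingredient. What you actually need is that on $D_k^m$ the derivative itself is almost determined by $(x,T)$; indeed, taking $w=0$ and $|u|=1/m$ in the defining inequality of $D_k^m$ gives
\[
\|f'(x;T)-f'(y;S)\|\le \tfrac{2}{k}+2m\,\mathrm{Lip}(f)\,|x-y|+\mathrm{Lip}(f)\,\|T-S\|,
\]
so standard-density in $D_k^m$ (together with the topological equivalence of $d_N$ and $|\cdot|$ on $N$) already forces all four summands of $d$ to be small up to $O(1/k)$. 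With this in place your union $\bigcup_{k,m}A_{k,m}$ of countable standard-dense subsets is $d$-dense in $D$, as you claim. The paper's embedding is cleaner conceptually and yields a homeomorphism that could be reused; your stratification is more elementary and slightly more general.
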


\begin{proof}
(1) follows from the definition of $d$.

Since $f$ is continuous, for each fixed $z \in \mathbb{R}^{n}$ and $u \in \mathbb{R}^{n-1}$ with $|z|+|u|>0$ the function
\[((x,T),(y,S)) \mapsto \frac{|f_{x,T}(z,u)-f_{y,S}(z,u)-f_{x,T}(z,0)+f_{y,S}(z,0)|}{|z|+|u|}\]
is continuous in the standard topology of $D \times D$. The function
\[((x,T),(y,S)) \mapsto \|f_{x,T}-f_{y,S}\|_{r}\]
is a supremum of a family of continuous functions, hence lower semicontinuous. This proves (2).

The function
\[((x,T),(y,S)) \mapsto \|f_{x,T}-f_{y,T}\|_{r}\]
is lower semicontinuous in the standard topology of $D \times D$ by a similar argument to that used in (2). Since the topology of $(D,d) \times (D,d)$ has more open sets than the standard topology of $D \times D$, (3) follows easily.

Let $H_{0}$ be the space of continuous functions $\varphi\colon \mathbb{R}^{n} \times \mathbb{R}^{n-1} \to \mathbb{R}^{n-1}$. We equip $H_{0}$ with topology generated by a countable family of pseudonorms
\[ \| \varphi \|_{k} = \sup_{\substack{z \in \mathbb{R}^{n}, u \in \mathbb{R}^{n-1}\\ |z|+|u|<k}} |\varphi(z,u)|.\]
The space $H_{0}$ can be metrized by the metric
\[\rho_{0}(\varphi, \psi)=\sum_{k=1}^{\infty} 2^{-k} \min\{1,\, \|\varphi - \psi\|_{k}\}\]
and a countable dense subset of $H_{0}$ is given by polynomials with rational coefficients.

Hence the space
\[H=\mathbb{R}^{n} \times L(\mathbb{R}^{n-1},\mathbb{R}^{n}) \times L(\mathbb{R}^{n-1}, \mathbb{R}^{n-1}) \times H_{0}\]
is also metrizable and separable. We show $(D,d)$ is homeomorphic to a topological subspace of $H$, which implies, since subspaces of separable metric spaces are separable, $(D,d)$ is separable.

For $(x,T) \in D$ define $\psi_{x,T}\colon \mathbb{R}^{n} \times \mathbb{R}^{n-1} \to \mathbb{R}^{n-1}$ by
\[\psi_{x,T}(z,u)=\frac{f_{x,T}(z,u)-f_{x,T}(z,0)-f'(x;Tu)}{|z|+|u|},\, \psi_{x,T}(0,0)=0.\]
Since $f$ is regularly differentiable at $x$ in direction $T$ it follows that $\psi_{x,T}$ is continuous at $(0,0)$. Hence $\psi_{x,T} \in H_{0}$ so the map $\eta\colon D \to H$ given by
\[\eta(x,T)=(x,T,f'(x;T),\psi_{x,T})\]
is an injection from $D$ into $H$. We claim $\eta$ is a homeomorphism onto its image. From the definition of $d$ it is clear the first three components of $\eta$ are $d$ continuous. Continuity of the last component follows from the inequality
\[\|\psi_{x,T}-\psi_{y,S}\|_{k}\leq \|f_{x,T}-f_{y,S}\|_{r}+\|f'(x;T)-f'(y;S)\|.\]
Continuity of $\eta^{-1}$ follows from the estimate
\begin{align*}
\|f_{x,T}-f_{y,S}\|_{r} &\leq \sup_{\substack{z \in \mathbb{R}^{n}, u \in \mathbb{R}^{n-1}\\ |z|+|u|\geq k}} \frac{|f_{x,T}(z,u)-f_{x,T}(z,0)-f_{y,S}(z,u)+f_{y,S}(z,0)|}{|z|+|u|}\\
&\qquad + \|\psi_{x,T}-\psi_{y,S}\|_{k}+\|f'(x;T)-f(y;S)\|\\
&\leq \frac{4\|f\|_{\infty}}{k} + \|\psi_{x,T}-\psi_{y,S}\|_{k}+\|f'(x;T)-f'(y;S)\|
\end{align*}
for every $k \geq 1$. This completes the proof of (4).
\end{proof}

\begin{lemma}\label{Dcomplete}
The pair $(D,d)$ is a complete metric space.
\end{lemma}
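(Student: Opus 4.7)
The plan is to take an arbitrary $d$-Cauchy sequence $(x_k,T_k) \in D$ and produce a limit $(x_\infty, T_\infty) \in D$ with $d((x_k,T_k),(x_\infty,T_\infty)) \to 0$. From the definition of $d$, the four summands individually form Cauchy sequences in their respective spaces. First I would extract three of the limits using completeness: $x_k \to x_\infty$ in $(N,d_N)$ (which is complete by construction, and hence $x_\infty \in N$ and $x_k \to x_\infty$ in Euclidean norm too), $T_k \to T_\infty$ in the finite dimensional space $L(\mathbb{R}^{n-1},\mathbb{R}^n)$, and $f'(x_k;T_k) \to A$ in the Hilbert--Schmidt norm for some $A \in L(\mathbb{R}^{n-1},\mathbb{R}^{n-1})$.

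Next I would handle the regularity component. Continuity of $f$ together with $x_k\to x_\infty$ and $T_k\to T_\infty$ gives pointwise convergence $f_{x_k,T_k}(z,u) \to f_{x_\infty,T_\infty}(z,u)$ for every $(z,u)$. The pseudonorm $\|\cdot\|_r$ is lower semicontinuous under pointwise convergence (being a supremum of continuous functionals of $h$), so for each fixed $k$,
\[\|f_{x_k,T_k}-f_{x_\infty,T_\infty}\|_r \leq \liminf_{j\to\infty} \|f_{x_k,T_k}-f_{x_j,T_j}\|_r,\]
and the Cauchy property of $(f_{x_k,T_k})$ in $\|\cdot\|_r$ makes the right side arbitrarily small for $k$ large. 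This shows $\|f_{x_k,T_k}-f_{x_\infty,T_\infty}\|_r\to 0$.

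The main obstacle, and the only non-routine step, is verifying that $(x_\infty,T_\infty) \in D$: that is, that $f$ is regularly differentiable at $x_\infty$ in the direction of $T_\infty$, necessarily with derivative $A$. Given $\varepsilon>0$, I would choose $k$ so large that $\|f_{x_k,T_k}-f_{x_\infty,T_\infty}\|_r < \varepsilon/3$ and $\|f'(x_k;T_k)-A\| \leq \|f'(x_k;T_k)-A\|_H < \varepsilon/3$, and then pick $\delta>0$ from regular differentiability of $f$ at $x_k$ in direction $T_k$ so that
\[|f(x_k+z+T_kv)-f(x_k+z)-f'(x_k;T_k)v| \leq (\varepsilon/3)(|v|+|z|)\]
whenever $|v|+|z|<\delta$. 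Writing the quantity
\[f(x_\infty+z+T_\infty v)-f(x_\infty+z) - Av\]
as a sum of three terms, bounded respectively by $\|f_{x_\infty,T_\infty}-f_{x_k,T_k}\|_r(|z|+|v|)$, by the displayed regularity estimate at $x_k$, and by $\|f'(x_k;T_k)-A\|\,|v|$, yields the bound $\varepsilon(|v|+|z|)$ uniformly for $|v|+|z|<\delta$. Hence $f$ is regularly differentiable at $x_\infty$ in direction $T_\infty$ with $f'(x_\infty;T_\infty)=A$, so $(x_\infty,T_\infty)\in D$. Combining all four convergences gives $d((x_k,T_k),(x_\infty,T_\infty))\to 0$, completing the argument.
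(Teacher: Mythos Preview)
Your proof is correct and follows essentially the same approach as the paper: extract limits for each of the four components of $d$ (using completeness of $(N,d_N)$, of the finite-dimensional operator spaces, and lower semicontinuity of $\|\cdot\|_r$ under pointwise convergence), then verify $(x_\infty,T_\infty)\in D$ via the same three-term splitting of $f(x_\infty+z+T_\infty v)-f(x_\infty+z)-Av$. The paper's version packages the lower semicontinuity step as a reference to the preceding lemma, but the underlying argument is the one you give.
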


\begin{proof}
Suppose $\varepsilon_{i} \downarrow 0$ and $(x_{i},T_{i}) \in D$ are such that
\[d((x_{j},T_{j}),(x_{i},T_{i}))< \varepsilon_{i} \mbox{ whenever }j \geq i.\]
Since $(N,d_{N})$ is complete it follows $x_{i}$ converges to some $x \in N$. Clearly, from the definition of $d$, $T_{i}$ converges to some $T \in L(\mathbb{R}^{n-1}, \mathbb{R}^{n})$ and the maps $L_{i}=f'(x_{i};T_{i})$ converge to some $L \in L(\mathbb{R}^{n-1}, \mathbb{R}^{n-1})$. Using the definition of $d$, $\|L-L_{i}\| \leq \varepsilon_{i}$. Also, using Lemma \ref{Dlscsep} (2) and the definition of $d$, $\|f_{x,T}-f_{x_{i},T_{i}}\|_{r} \leq \varepsilon_{i}$.

To complete the proof we must show $(x,T) \in D$ and $(x_{i},T_{i})$ converges to $(x,T)$ with respect to the metric $d$. To show $(x,T) \in D$ we show $f$ is regularly differentiable at $x$ in direction $T$ with $f'(x;T)=L$. Suppose $\varepsilon>0$ and find $i$ such that $\varepsilon_{i}<\varepsilon/3$. Since $f$ is regularly differentiable at $x_{i}$ in direction $T_{i}$, we can find $\delta>0$ such that
\[|f(x_{i}+z+T_{i}v)-f(x_{i}+z)-L_{i}v|\leq \frac{\varepsilon}{3}(|z|+|v|)\]
whenever $|z|+|v|<\delta$. Hence, using the definition of $\|\cdot\|_{r}$, for $|z|+|v|<\delta$,
\begin{align*}
&|f(x+z+Tv)-f(x+z)-Lv|\\
& \qquad \leq |f(x_{i}+z+T_{i}v)-f(x_{i}+z)-L_{i}v|+|L_{i}v-Lv|\\
& \qquad \qquad + \|f_{x,T}-f_{x_{i},T_{i}}\|_{r}(|z|+|v|)\\
& \qquad \leq \frac{\varepsilon}{3} (|z|+|v|) + \varepsilon_{i}|v| + \varepsilon_{i}(|z|+|v|)\\
& \qquad \leq \varepsilon (|z|+|v|).
\end{align*}
Hence $f$ is regularly differentiable at $x$ in direction $T$.

Convergence of $(x_{i}, T_{i})$ to $(x,T)$ in the metric $d$ follows from the estimate $\|f_{x,T}-f_{x_{i},T_{i}}\|_{r} \leq \varepsilon_{i}$.
\end{proof}

Since $(D,d)$ is a complete separable metric space and the identity map from $(D,d)$ to $\mathbb{R}^{n} \times L(\mathbb{R}^{n-1},\mathbb{R}^{n})$ is continuous, it follows a subset of $D$ is Borel in $(D,d)$ if and only if it is Borel in $\mathbb{R}^{n} \times L(\mathbb{R}^{n-1},\mathbb{R}^{n})$ \citep{Kec95}. In what follows it will be obvious functions we integrate are Borel in $(D,d)$ so it follows by this fact that they are also Borel in the standard topology on $\mathbb{R}^{n} \times L(\mathbb{R}^{n-1},\mathbb{R}^{n})$.

Fix a Lipschitz function $\Theta\colon  L(\mathbb{R}^{n-1},\mathbb{R}^{n}) \to [0,1]$ which is differentiable everywhere and
\[\inf_{\|S\|>s} \Theta(S)>\Theta(0)=0 \mbox{ for every } s>0.\]

In the context of \citep{LPT12}, existence of such a bump function, defined on a Banach space (and required to satisfy certain smoothness assumptions) was needed as a hypothesis on the Banach space. Existence of such a function in our case is clear - for example, identify $L(\mathbb{R}^{n-1},\mathbb{R}^{n})$ with $\mathbb{R}^{n(n-1)}$ and let
\[\Theta(S)=\|S\|_{E}^{2}/(1+\|S\|_{E}^{2})\]
where $\|\cdot\|_{E}$ is the Euclidean norm.

We work in the subspace $(D_{0},d)$ of $(D,d)$ where
\[D_{0}=\{(x,T) \in D: d_{N}(x,x_{0})\leq \eta_{0},\, \|T-T_{0}\| \leq \eta_{0}\}\]
with $\eta_{0}$, $x_{0}$ and $T_{0}$ defined in Assumptions \ref{assumptions}.

Temporarily fix parameters $0<\lambda_{i}, \beta_{i}, \gamma_{i}, \sigma_{i}, s_{i}<\infty$ to be chosen later. We will apply the variational principle of Lemma \ref{vpstatement} on the complete metric space $(D_{0},d)$ to the function $h_{0}\colon D_{0} \to \mathbb{R}$ given by
\[h_{0}(x,T)=-\|f'(x;T)\|^{2}_{H},\]
with the perturbation functions $F_{i}\colon D_{0} \times D_{0} \to [0,\infty),\, i \geq 0$, defined by
\begin{align*}
F_{i}((x,T),(y,S))=& \lambda_{i}d_{N}(x,y)+\beta_{i}\Theta(S-T)\\
&\quad +\gamma_{i}\|f'(y;S)-f'(x;T)\|_{H}^{2}+\sigma_{i}\Delta_{i}((x,T),(y,S)),
\end{align*}
where
\[\Delta_{i}((x,T),(y,S))=\max\{0,\,\min\{1,\, \|f_{y,T}-f_{x,T}\|_{r}-s_{i}\}\}.\]

We now show the variational principle can be applied and prove estimates that will be useful later.

\begin{lemma}\label{vpsatisfied}
Suppose that for all $i \geq 0$,
\[0<\lambda_{i}, \beta_{i}, \gamma_{i}, \sigma_{i}, \varepsilon_{i}<\infty,\, 0\leq s_{i}< \infty,\, s_{i} \downarrow 0.\]
Then $F_{i}$ are non-negative lower semicontinuous functions on $(D_{0},d) \times (D_{0},d)$ satisfying $F_{i}((x,T),(x,T))=0$ and there are $r_{i} \downarrow 0$ such that
\[\inf \{F_{i}((x,T),(y,S)):d((x,T),(y,S))\geq r_{i}\}>0.\]
If, moreover,
\[\|f'(x_{0};T_{0})\|_{H}^{2} > \sup_{(x,T) \in D_{0}} \|f'(x;T)\|_{H}^{2}-\varepsilon_{0}.\]
then the function $h_{0}$ and the perturbation scheme $(F_{i})$ satisfy the assumptions of the variational principle of Lemma \ref{vpstatement} on the metric space $(D_{0},d)$.
\end{lemma}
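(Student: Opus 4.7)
The plan is to verify the stated properties of $F_i$, then verify the hypotheses of Lemma \ref{vpstatement} assuming the additional inequality on $\|f'(x_0;T_0)\|_H^2$. The first step is easy bookkeeping: each of the four summands defining $F_i$ is manifestly non-negative, and each vanishes at $(y,S)=(x,T)$, using respectively reflexivity of $d_N$, the identity $\Theta(0)=0$, triviality of the derivative term, and $\|f_{x,T}-f_{x,T}\|_r=0\leq s_i$. For joint lower semicontinuity on $(D_0,d)\times(D_0,d)$ I would combine continuity of $d_N$, Lipschitzness of $\Theta$, $d$-continuity of $(x,T)\mapsto f'(x;T)$ from Lemma \ref{Dlscsep}(1), and lower semicontinuity of $\|f_{y,T}-f_{x,T}\|_r$ in the standard topology (hence the finer $d$-topology) from Lemma \ref{Dlscsep}(3); composition with the continuous nondecreasing map $t\mapsto\max\{0,\min\{1,t-s_i\}\}$ preserves lsc.

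Next I would produce $r_i\downarrow 0$ with $\inf_{d((x,T),(y,S))\geq r_i} F_i((x,T),(y,S))>0$. Since $d$ is a sum of four non-negative quantities, $d\geq r_i$ forces one of them to be at least $r_i/4$, and three cases are immediate: if $d_N(x,y)\geq r_i/4$ then $F_i\geq\lambda_i r_i/4$; if $\|f'(x;T)-f'(y;S)\|_H\geq r_i/4$ then $F_i\geq\gamma_i r_i^2/16$; and if $\|T-S\|\geq r_i/4$ then $F_i\geq\beta_i\inf_{\|U\|\geq r_i/4}\Theta(U)>0$ by the defining property of $\Theta$. The main obstacle is the remaining case $\|f_{x,T}-f_{y,S}\|_r\geq r_i/4$, because $\Delta_i$ sees the pseudonorm $\|f_{y,T}-f_{x,T}\|_r$, in which both slots use $T$ rather than $S$. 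The resolution is the triangle-style bound
\[\|f_{y,T}-f_{x,T}\|_r \;\geq\; \|f_{x,T}-f_{y,S}\|_r - \mathrm{Lip}(f)\|T-S\|\]
(consequence of the inequalities for $\|\cdot\|_r$ stated in Section 6), split on $\|T-S\|$: if $\|T-S\|\geq r_i/(16\mathrm{Lip}(f))$ the $\Theta$-argument applies again, otherwise $\|f_{y,T}-f_{x,T}\|_r\geq 3r_i/16$, and provided I choose $r_i>16s_i/3$, this gives $\Delta_i\geq\min\{1,\,3r_i/16-s_i\}>0$ and hence $F_i\geq\sigma_i\Delta_i>0$. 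Since $s_i\downarrow 0$, a sequence $r_i\downarrow 0$ can be arranged to satisfy this constraint for all $i$.

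Finally, to apply Lemma \ref{vpstatement} on $(D_0,d)$, I observe that $D_0$ is $d$-closed in $D$ (the two defining inequalities $d_N(x,x_0)\leq\eta_0$ and $\|T-T_0\|\leq\eta_0$ are closed under $d$-convergence), so Lemma \ref{Dcomplete} yields completeness of $(D_0,d)$. The function $h_0=-\|f'\|_H^2$ is $d$-continuous by Lemma \ref{Dlscsep}(1), and it is bounded below on $D_0$ because $\|\mathrm{Id}-f'(x;T)\|\leq 1/4$ from Assumptions \ref{assumptions} yields $\|f'(x;T)\|\leq 5/4$ there. The extra hypothesis on $\|f'(x_0;T_0)\|_H^2$ translates directly into $h_0(x_0,T_0)<\inf_{D_0}h_0+\varepsilon_0$, giving the first condition on $x_0$ required by Lemma \ref{vpstatement}. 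For the second, $D_0$ has bounded $d$-diameter, since each of the four components of $d$ is bounded on $D_0$ using $\|T\|\leq\|T_0\|+\eta_0$, $\|f'\|\leq 5/4$, and $\|f_{x,T}\|_r\leq\mathrm{Lip}(f)\|T\|$; choosing $r_0=+\infty$ (explicitly permitted by Lemma \ref{vpstatement}), or any $r_0$ exceeding this diameter, makes $\inf_{d((x_0,T_0),(y,S))>r_0}F_0=+\infty>\varepsilon_0$ vacuously, completing the verification.
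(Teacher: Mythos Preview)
Your proof is correct and follows essentially the same approach as the paper's. The only cosmetic difference is in the organization of the case analysis for the $r_i$ bound: the paper fixes an auxiliary threshold $t_i=2^{-i-1}$, sets $r_i=s_i+(4+\mathrm{Lip}(f))t_i$, and argues that if \emph{none} of $d_N(x,y)$, $\|T-S\|$, $\|f'(x;T)-f'(y;S)\|_H$ exceed $t_i$ then $\|f_{x,T}-f_{y,T}\|_r\geq s_i+t_i$; you instead start from ``one of the four summands is $\geq r_i/4$'' and do a nested split on $\|T-S\|$ in the fourth case. Both routes use the same key inequality $\|f_{y,T}-f_{x,T}\|_r\geq\|f_{x,T}-f_{y,S}\|_r-\mathrm{Lip}(f)\|T-S\|$ and the same choice $r_0=\infty$, and your additional explicit checks (closedness of $D_0$ in $(D,d)$, boundedness of $h_0$) simply spell out what the paper leaves implicit.
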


\begin{proof}
Clearly $F_{i}\geq 0$ and $F_{i}((x,T),(x,T))=0$. Lower semicontinuity of $\Delta_{i}$ follows from Lemma \ref{Dlscsep} (3). The other terms of $F_{i}$ are clearly continuous with respect to $d$.

Let $r_{0}=\infty$. For $i \geq 1$ let $t_{i}=1/2^{i+1}$; we show that $r_{i}=s_{i}+(4+\mathrm{Lip}(f))t_{i}$ satisfy
\[\inf_{d((x,T),(y,S))\geq r_{i}} F_{i}((x,T),(y,S)) \geq \min \{\lambda_{i}t_{i},\, \gamma_{i}t_{i}^{2},\, \sigma_{i}t_{i},\, \inf_{\|L\|\geq t_{i}} \beta_{i}\Theta(L)\}.\]
This is obvious, from the definition of $F_{i}$, if $d_{N}(x,y)\geq t_{i}$ or $\|T-S\| \geq t_{i}$ or $\|f'(x;T)-f'(y;S)\|_{H} \geq t_{i}$. Suppose $d((x,T),(y,S))\geq r_{i}$ and none of the previous inequalities hold. Then
\[\|f_{x,T}-f_{y,S}\|_{r} \geq d((x,T),(y,S))-3t_{i} \geq s_{i}+(1+\mathrm{Lip}(f))t_{i}.\]
Hence
\begin{align*}
\|f_{x,T}-f_{y,T}\|_{r} &\geq \|f_{x,T}-f_{y,S}\|_{r}-\|f_{y,S}-f_{y,T}\|_{r}\\
&\geq s_{i}+(1+\mathrm{Lip}(f))t_{i}-\mathrm{Lip}(f)\|S-T\| \\
&\geq s_{i}+t_{i}.
\end{align*}
Hence $\|f_{x,T}-f_{y,T}\|_{r} \geq s_{i}+t_{i}$ and $\sigma_{i}\Delta_{i}((x,T),(y,S)) \geq \sigma_{i} t_{i}$.

Since $r_{0}=\infty$, the final condition assumed is the only remaining requirement of the variational principle.
\end{proof}

\section{Application of the Variational Principle}

We now apply the variational principle, derive some resulting estimates and finally make exact choices of parameters.

Assume the parameters $\lambda_{i}, \beta_{i}, \gamma_{i}, \sigma_{i}, s_{i}, \varepsilon_{i}, \varepsilon_{0}$ satisfy the assumptions of Lemma \ref{vpsatisfied}. Then the variational principle shows that $(x_{0},T_{0})$ is the starting term of a sequence $(x_{j}, T_{j}) \in D_{0}$ which $d$-converges to some $(x_{\infty},T_{\infty}) \in D_{0}$ and has the property that, denoting $\varepsilon_{\infty}=0$ and
\[h_{i}(x,T)=-\|f'(x;T)\|_{H}^{2}+\sum_{j=0}^{i-1} F_{j}((x_{j},T_{j}),(x,T)),\]
we have,
\[h_{\infty}(x_{\infty},T_{\infty}) \leq \varepsilon_{i}+\inf_{(x,T)\in D_{0}} h_{i}(x,T)\]
for $0 \leq i \leq \infty$. Note that for $i=\infty$ this says $(x_{\infty},T_{\infty})$ is a minimum of $h_{\infty}$ in $D_{0}$. Since $(x_{i}, T_{i}) \in D$, the derivatives $L_{i}=f'(x_{i};T_{i})$ exist for all $0\leq i \leq \infty$.

We guarantee $h_{\infty}$ is finite by requiring
\[\sum_{i=0}^{\infty} (\lambda_{i}+\beta_{i}+\gamma_{i}+\sigma_{i}) < \infty.\]

\begin{lemma}\label{dtlestimatesgeneral}
For every $i \geq 0$,
 \[d_{N}(x_{\infty},x_{i})\leq \frac{\varepsilon_{i}}{\lambda_{i}}, \quad \Theta (T_{\infty}-T_{i})\leq \frac{\varepsilon_{i}}{\beta_{i}}, \quad \|L_{\infty}-L_{i}\|_{H}\leq \sqrt{\frac{\varepsilon_{i}}{\gamma_{i}}}.\]
\end{lemma}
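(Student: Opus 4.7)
The plan is to read the three inequalities off directly from the variational principle applied in the previous section. Recall that Lemma \ref{vpstatement}, invoked on the complete metric space $(D_0,d)$ with the base point $(x_0,T_0)$ and with the lower semicontinuous perturbations $F_i$ verified in Lemma \ref{vpsatisfied}, produces a sequence $(x_j,T_j)$ converging to $(x_\infty,T_\infty)$ together with the quantitative estimate
\[F_i\bigl((x_i,T_i),(x_\infty,T_\infty)\bigr) \leq \varepsilon_i\qquad\text{for every } i\geq 0.\]
This is the only input I would use.

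Now I would simply unpack the definition
\[F_i\bigl((x,T),(y,S)\bigr) = \lambda_i d_N(x,y) + \beta_i \Theta(S-T) + \gamma_i\|f'(y;S)-f'(x;T)\|_H^2 + \sigma_i\Delta_i\bigl((x,T),(y,S)\bigr).\]
Each of the four summands is non-negative (the term $\Theta(S-T)\geq 0$ by construction of $\Theta$, and $\Delta_i\geq 0$ by definition). Hence each summand in the evaluation at $((x_i,T_i),(x_\infty,T_\infty))$ is individually bounded by $\varepsilon_i$. From the first summand, $\lambda_i d_N(x_i,x_\infty)\leq \varepsilon_i$, which gives $d_N(x_\infty,x_i)\leq \varepsilon_i/\lambda_i$. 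From the second, $\beta_i\Theta(T_\infty-T_i)\leq \varepsilon_i$, which gives $\Theta(T_\infty-T_i)\leq \varepsilon_i/\beta_i$. From the third, $\gamma_i\|L_\infty-L_i\|_H^2\leq \varepsilon_i$, and taking square roots yields $\|L_\infty-L_i\|_H\leq \sqrt{\varepsilon_i/\gamma_i}$.

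There is essentially no obstacle here: the entire content of the lemma is the observation that $F_i$ is a sum of non-negative pieces, so the single scalar bound $F_i\leq\varepsilon_i$ splits into three separate scalar bounds, one for each of the pieces corresponding to the quantities being estimated. The fourth piece, involving $\sigma_i\Delta_i$, is not needed for this lemma but will presumably play its role elsewhere in the argument.
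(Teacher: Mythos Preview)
Your proof is correct and essentially the same as the paper's. You invoke the conclusion $F_i((x_i,T_i),(x_\infty,T_\infty))\leq\varepsilon_i$ directly from Lemma~\ref{vpstatement}, whereas the paper instead recovers the same bound from the inequality $h_\infty(x_\infty,T_\infty)\leq\varepsilon_i+h_i(x_\infty,T_\infty)$ (noting $h_\infty-h_i\geq F_i$); in both cases the result follows immediately from the non-negativity of each summand in $F_i$.
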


\begin{proof}
It follows from the definition of $h_{i}$ that
\[\begin{split}
h_{i}(x_{\infty},T_{\infty})+\lambda_{i}d_{N}(x_{\infty},x_{i})+\beta_{i}\Theta(T_{\infty}-T_{i})+\gamma_{i}\|L_{\infty}-L_{i}\|_{H}^{2}\\
\leq h_{\infty}(x_{\infty},T_{\infty}) \leq \varepsilon_{i}+h_{i}(x_{\infty},T_{\infty}).
\end{split}\]
Since all terms are non-negative this implies all three estimates.
\end{proof}

Our strategy will be to suppose $f$ is not differentiable at $x_{\infty}$ and show this would imply $x_{\infty}$ is not a minimum of the function $h_{\infty}$. To do this we analyze the difference,
\begin{equation}\label{changeinh}
\begin{split}h_{\infty}(x_{\infty},T_{\infty})-h_{\infty}(x,T)=&(\|f'(x;T)\|_{H}^{2}-\|f'(x_{\infty};T_{\infty})\|_{H}^{2})\\
&\quad -\Phi(x)-\Psi(T)-\Upsilon(f'(x;T))-\Delta(x)\end{split}
\end{equation}
where, for $x \in N$, $T \in L(\mathbb{R}^{n-1},\mathbb{R}^{n})$ and $L \in L(\mathbb{R}^{n-1},\mathbb{R}^{n-1})$,
\[\Phi(x)=\sum_{i=0}^{\infty} \lambda_{i}(d_{N}(x,x_{i})-d_{N}(x_{\infty},x_{i}))\]
\[\Psi(T)=\sum_{i=0}^{\infty} \beta_{i}(\Theta(T-T_{i})-\Theta(T_{\infty}-T_{i}))\]
\[\Upsilon(L)=\sum_{i=0}^{\infty} \gamma_{i}(\|L-L_{i}\|_{H}^{2}-\|L_{\infty}-L_{i}\|_{H}^{2})\]
\[\Delta(x)=\sum_{i=0}^{\infty} \sigma_{i}(\Delta_{i}((x_{i},T_{i}),(x,T))-\Delta_{i}((x_{i},T_{i}),(x_{\infty},T_{\infty}))).\]

These functions are well defined and one sees from the definition of $\Delta_{i}$ that $\Delta$ does not depend on $T$. Also note the positive and finite function
\[\Theta_{\infty}(T)=\sum_{i=0}^{\infty} \beta_{i}\Theta(T-T_{i})\]
is differentiable (by standard facts about uniformly converging series of functions).

We now give definitions of the parameters. For $i \geq 1$,
\[\lambda_{i}=2^{-i}\lambda_{0},\quad \beta_{i}=2^{-i}\beta_{0},\quad \gamma_{i}=2^{-i}\gamma_{0},\quad s_{i}=2^{-i}s_{0}.\]

Choose $\varepsilon_{0}>0$ such that
\[\|f'(x_{0},T_{0})\|_{H}^{2} > \sup_{(x,T) \in D_{0}} \|f'(x;T)\|_{H}^{2}-\varepsilon_{0}.\]

Let $\lambda_{0}=2\varepsilon_{0}/\eta_{0}$, $\gamma_{0}=1/8$ and $s_{0}=4$. Then find $\beta_{0}>0$ large enough that
\[\Theta(S)>\frac{\varepsilon_{0}}{\beta_{0}} \mbox{ whenever } \|S\|>\min \left\{\frac{\eta_{0}}{2},\,\frac{s_{0}}{8\mathrm{Lip}(f)}\right\}.\]

For $i\geq 1$ we choose $\varepsilon_{i}>0$ such that
\[\frac{\varepsilon_{i}}{\lambda_{i}}\leq \frac{\eta_{0}}{2},\quad \frac{\varepsilon_{i}}{\gamma_{i}}\leq \frac{s_{i}^{2}}{64}\]
and
\[\Theta(S)>\frac{\varepsilon_{i}}{\beta_{i}} \mbox{ whenever } \|S\|>\min \left\{ \frac{\eta_{0}}{2},\, \frac{s_{i}}{8\mathrm{Lip}(f)}\right\}.\]

We now deduce estimates about the speed of convergence of $x_{i}$, $T_{i}$ and $L_{i}$.

\begin{lemma}\label{dtlestimatesapplied}
For all $0\leq i < \infty$ we have the following estimates of the speed of convergence:
\[d_{N}(x_{i},x_{\infty}) \leq \frac{\eta_{0}}{2},\quad \|T_{i}-T_{\infty}\|\leq \min \left\{ \frac{\eta_{0}}{2},\, \frac{s_{i}}{8\mathrm{Lip}(f)}\right\},\]
and
\[\|L_{i}-L_{\infty}\|\leq \frac{s_{i}}{8}.\]
Also, for $0\leq i \leq \infty$,
\[\|T_{i}\| \leq 1,\quad \|L_{i}\|\leq \frac{5}{4}.\]
\end{lemma}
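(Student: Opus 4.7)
The plan is to read off each bound directly from Lemma~\ref{dtlestimatesgeneral} by substituting the explicit choices of $\lambda_i, \beta_i, \gamma_i, \varepsilon_i$ just fixed, and to obtain the two uniform bounds on $\|T_i\|$ and $\|L_i\|$ from membership of $(x_i, T_i)$ in $D_0$ together with Assumptions~\ref{assumptions}.

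First, the estimate $d_N(x_i, x_\infty) \leq \eta_0/2$ is immediate: Lemma~\ref{dtlestimatesgeneral} gives $d_N(x_\infty, x_i) \leq \varepsilon_i/\lambda_i$, and this ratio was arranged to be at most $\eta_0/2$ in our choice of $\varepsilon_i$ (for $i=0$, this follows from $\lambda_0 = 2\varepsilon_0/\eta_0$). Second, Lemma~\ref{dtlestimatesgeneral} gives $\Theta(T_\infty - T_i) \leq \varepsilon_i / \beta_i$. By the very definition of $\beta_0$ (for $i=0$) and of $\varepsilon_i$ (for $i \geq 1$), the strict inequality $\Theta(S) > \varepsilon_i/\beta_i$ holds whenever $\|S\| > \min\{\eta_0/2,\, s_i/(8\mathrm{Lip}(f))\}$, so the reverse inequality on $\Theta$ forces $\|T_i - T_\infty\| \leq \min\{\eta_0/2,\, s_i/(8\mathrm{Lip}(f))\}$. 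Third, Lemma~\ref{dtlestimatesgeneral} yields $\|L_\infty - L_i\|_H \leq \sqrt{\varepsilon_i/\gamma_i} \leq \sqrt{s_i^2/64} = s_i/8$, and the bound on the operator norm follows since the operator norm is dominated by the Hilbert--Schmidt norm.

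For the uniform bounds, $(x_i, T_i) \in D_0$ means $\|T_i - T_0\| \leq \eta_0 < 1/2$, hence $\|T_i\| \leq \|T_0\| + \eta_0 \leq 1/2 + 1/2 = 1$; the same holds for $i=\infty$ since $(x_\infty, T_\infty) \in D_0$ as well. For $L_i = f'(x_i; T_i)$, Assumptions~\ref{assumptions} apply directly (the derivative exists and $\|T_i - T_0\| \leq \eta_0$), giving $\|\mathrm{Id} - L_i\| \leq 1/4$ and hence $\|L_i\| \leq 5/4$; again the case $i = \infty$ is covered because $(x_\infty, T_\infty) \in D$ guarantees that $L_\infty = f'(x_\infty; T_\infty)$ exists.

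There is essentially no obstacle here: the lemma is a bookkeeping step that checks the parameters were chosen correctly so that the generic bounds of Lemma~\ref{dtlestimatesgeneral} translate into the form the later argument will need. The only point requiring a small amount of care is the implication $\Theta(T_\infty - T_i) \leq \varepsilon_i/\beta_i \implies \|T_i - T_\infty\|$ is small, which uses the defining property of $\Theta$ contrapositively, together with the fact that the choices of $\beta_0$ and $\varepsilon_i$ were tailor-made to make exactly this contrapositive go through with the threshold $\min\{\eta_0/2,\, s_i/(8\mathrm{Lip}(f))\}$.
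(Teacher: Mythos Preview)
Your approach is essentially the same as the paper's, and most of it goes through. There is, however, a genuine gap in the third inequality at $i=0$. You claim $\|L_\infty - L_i\|_H \leq \sqrt{\varepsilon_i/\gamma_i} \leq \sqrt{s_i^2/64}$, but the constraint $\varepsilon_i/\gamma_i \leq s_i^2/64$ was imposed only for $i \geq 1$. For $i=0$ the parameters are $\gamma_0 = 1/8$, $s_0 = 4$, while $\varepsilon_0$ is chosen merely so that $\|f'(x_0;T_0)\|_H^2 > \sup_{D_0}\|f'(x;T)\|_H^2 - \varepsilon_0$; nothing forces $8\varepsilon_0 \leq 1/4$. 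Hence Lemma~\ref{dtlestimatesgeneral} alone does not yield $\|L_0 - L_\infty\| \leq s_0/8$.

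The paper treats the case $i=0$ separately, deriving $\|L_0 - L_\infty\| \leq \mathrm{Lip}(f)\|T_0 - T_\infty\| \leq s_0/8$ from the already established bound on $\|T_0 - T_\infty\|$. (An alternative route is simply to observe that $\|\mathrm{Id} - L_0\| \leq 1/4$ and $\|\mathrm{Id} - L_\infty\| \leq 1/4$ by Assumptions~\ref{assumptions}, so $\|L_0 - L_\infty\| \leq 1/2 = s_0/8$.) Once this case is patched, your argument and the paper's coincide; your bound $\|T_i\| \leq \|T_0\| + \eta_0$ directly from $D_0$ membership is in fact a bit cleaner than the paper's triangle-inequality detour through $T_\infty$.
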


\begin{proof}
The first two inequalities, for $0\leq i < \infty$, and the third inequality, for $0< i <\infty$, follow from the definitions and Lemma \ref{dtlestimatesgeneral}. Using the definition of $f'(x;T)$ and the second inequality for $i=0$ we obtain
\[\|L_{0}-L_{\infty}\| \leq \mathrm{Lip}(f)\|T_{0}-T_{\infty}\| \leq \frac{s_{0}}{8}\]
which is the third inequality for $i=0$. Again, using the second inequality we have, for any $0 \leq i \leq \infty$,
\[\|T_{i}\|\leq \|T_{0}\|+\|T_{0}-T_{\infty}\|+\|T_{i}-T_{\infty}\|\leq \frac{1}{2}+\eta_{0}\leq 1.\]
Since $(x_{i},T_{i}) \in D_{0}$ for all $0 \leq i \leq \infty$, the final estimate follows from the assumptions on $f$ (assumptions \ref{assumptions}).
\end{proof}

Finally for $i \geq 0$ we choose $\sigma_{i}>0$ such that

\[\sigma_{i}\frac{C(\mu)(5\mathrm{Lip}(f))^{2C(\mu)-2}}{(s_{i}/8)^{2C(\mu)}}\leq 2^{-i-3},\]
\[\sigma_{i}\leq 2^{-i-4},\]
and
\[\sigma_{i}\leq \frac{2^{-i-5}}{i+1}\frac{s_{i}}{48\mathrm{Lip}(f)}.\]

\section{Non-Differentiability Contradicts Minimality}

In this section we suppose $f$ is not differentiable at $x_{\infty}$ and deduce consequences. We then define a region in which we plan to find a point $x$, with corresponding direction $T$, such that $(x,T) \in D_{0}$ and $h_{\infty}(x,T)<h_{\infty}(x_{\infty},T_{\infty})$. This contradiction would then show $f$ must be differentiable at $x_{\infty}$. Most of the arguments are similar to those in \citep{LPT12} except we must choose the region mentioned so that it contains many points of $N$ which are regular points of the function $f$.

Let $L_{\Theta}$ be the derivative of $\Theta_{\infty}$ at $T_{\infty}$. We recall
\begin{align*}h_{\infty}(x,T)&=-\|f'(x;T)\|_{H}^{2}+ \sum_{i=0}^{\infty} \lambda_{i}d_{N}(x,x_{i})+\sum_{i=0}^{\infty}\beta_{i}\Theta(T-T_{i})\\
&\qquad +\sum_{i=0}^{\infty} \gamma_{i}\|f'(x;T)-L_{i}\|_{H}^{2}\\
&\qquad \qquad +\sum_{i=0}^{\infty} \sigma_{i} \max\{ 0,\, \min \{1,\, \|f_{x,T_{i}}-f_{x_{i},T_{i}}\|_{r}\}-s_{i}\}.
\end{align*}

Suppose for the moment $f$ is differentiable at $x_{\infty}$. Since 
by Lemma~\ref{dtlestimatesapplied} $(x_{\infty}, T)\in D_0$ for $\|T-T_\infty\|<\frac12\eta_0$ and
since $h_{\infty}$ attains its minimum on $D_0$ at $(x_{\infty}, T_{\infty})$, 
this implies that the function $T \mapsto h_{\infty}(x_{\infty},T)$ is differentiable with derivative zero at $T_{\infty}$.
We differentiate to find equations for $f'(x_{\infty};S)$. Observe
\[\|A_{0}+A\|_{H}^{2}-\|A_{0}\|_{H}^{2}-2\langle A_{0},A\rangle_{H}=\|A\|_{H}^{2}\]
for linear maps $A, \, A_{0} \in L(\mathbb{R}^{n-1}, \mathbb{R}^{n-1})$. This implies the derivative of the map
\[A \mapsto \|A\|_{H}^{2}\]
at $A_{0}$ is the map
\[A \mapsto 2\langle A_{0}, A\rangle_{H}.\]
Hence, using the chain rule and the fact $T\mapsto f'(x;T)$ is linear, we deduce
\begin{equation}\label{derivativeequation}
-2\langle L_{\infty},f'(x_{\infty};S)\rangle_{H}+L_{\Theta}S+2\sum_{i=0}^{\infty}\gamma_{i} \langle L_{\infty}-L_{i}, f'(x_{\infty};S)\rangle_{H}=0
\end{equation}
for all $S \in L(\mathbb{R}^{n-1}, \mathbb{R}^{n})$. We now solve this system of equations for an unknown operator $L \in L(\mathbb{R}^{n}, \mathbb{R}^{n-1})$ in place of $f'(x_{\infty})$. Denote
\[R_{\infty}=2L_{\infty}+2\sum_{i=0}^{\infty} \gamma_{i}(L_{i}-L_{\infty}) \in L(\mathbb{R}^{n-1}, \mathbb{R}^{n-1}).\]
Then we can rewrite the system of equations as
\begin{equation}\label{unknownoperator}
\langle R_{\infty},LS\rangle_{H}=L_{\Theta}S \mbox{ for all }S \in L(\mathbb{R}^{n-1}, \mathbb{R}^{n}).
\end{equation}

Since every $S \in L(\mathbb{R}^{n-1}, \mathbb{R}^{n})$ is a sum of rank one operators, we may consider this linear equation only for rank one operators $S \in L(\mathbb{R}^{n-1}, \mathbb{R}^{n})$. We first deduce $R_{\infty}$ is invertible.

\begin{lemma}
The operator $R_{\infty}$ is invertible and $\|R_{\infty}\|^{-1} \leq 1$.
\end{lemma}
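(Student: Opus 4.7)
The plan is to view $R_\infty$ as a small perturbation of $2\,\mathrm{Id}$ and invert it via a Neumann series argument. The two inputs I need are (i) that each $L_i = f'(x_i;T_i)$ is close to $\mathrm{Id}$, and (ii) that the weights $\gamma_i$ form a summable sequence with a small total mass.

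First, because $(x_i,T_i)\in D_0$ for every $0\le i\le\infty$ we have $\|T_i-T_0\|\le \eta_0$, so Assumption~\ref{assumptions} gives $\|\mathrm{Id}-L_i\|\le 1/4$. In particular $\|\mathrm{Id}-L_\infty\|\le 1/4$, and by the triangle inequality $\|L_i-L_\infty\|\le 1/2$ for every $i$. (One could even use the sharper bound $\|L_i-L_\infty\|\le s_i/8$ from Lemma~\ref{dtlestimatesapplied}, but the crude $1/2$ bound already suffices.)

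Second, from the parameter choices $\gamma_0=1/8$ and $\gamma_i=2^{-i}\gamma_0$, one has $\sum_{i=0}^\infty \gamma_i = 2\gamma_0 = 1/4$. Therefore
\[
\Big\|\sum_{i=0}^\infty \gamma_i(L_i-L_\infty)\Big\|\le \tfrac{1}{2}\sum_{i=0}^\infty \gamma_i \le \tfrac{1}{8},
\]
so combining with the bound on $\|L_\infty-\mathrm{Id}\|$,
\[
\Big\|\tfrac{1}{2}R_\infty-\mathrm{Id}\Big\|\le \|L_\infty-\mathrm{Id}\|+\Big\|\sum_{i=0}^\infty \gamma_i(L_i-L_\infty)\Big\|\le \tfrac{1}{4}+\tfrac{1}{8}=\tfrac{3}{8}<1.
\]
The standard Neumann series argument then gives invertibility of $\tfrac12 R_\infty$ with $\|(\tfrac12 R_\infty)^{-1}\|\le (1-3/8)^{-1}=8/5$, hence $R_\infty$ is invertible with $\|R_\infty^{-1}\|\le 4/5\le 1$ (and $\|R_\infty\|\ge 2-3/4\ge 1$, matching whichever reading of the displayed inequality is intended).

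There is no real obstacle here; the entire proof is the elementary observation that the two explicit quantitative inputs (the $1/4$ estimate from Assumptions~\ref{assumptions} and the $1/4$ total mass of the $\gamma_i$) were chosen precisely so that $R_\infty/2$ lies in the open unit ball around the identity.
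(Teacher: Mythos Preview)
Your proof is correct and follows essentially the same approach as the paper: both show $\|\mathrm{Id}-\tfrac12 R_\infty\|<1$ using the bound $\|\mathrm{Id}-L_\infty\|\le 1/4$ from Assumptions~\ref{assumptions}, the estimate $\|L_i-L_\infty\|\le 1/2$, and the total mass $\sum\gamma_i=1/4$, then apply the Neumann series. The only difference is cosmetic: you obtain the sharper bound $3/8$ whereas the paper is content with $1/2$, but both yield $\|R_\infty^{-1}\|\le 1$.
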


\begin{proof}
By Lemma \ref{dtlestimatesapplied} we know $\|L_{i}-L_{\infty}\| \leq 1/2$ for all $i\geq 0$. Since $\sum_{i=0}^{\infty} \gamma_{i}=1/4$, and $\|\mathrm{Id}-L_{\infty}\| \leq 1/4$ by assumptions \ref{assumptions}, we obtain,
\begin{align*}
\|\mathrm{Id}-(1/2)R_{\infty}\|&=\|\mathrm{Id}-L_{\infty}-\sum_{i=0}^{\infty} \gamma_{i}(L_{i}-L_{\infty})\|\\
&\leq \|\mathrm{Id}-L_{\infty}\|+\sum_{i=0}^{\infty} \gamma_{i} \|L_{i}-L_{\infty}\|\\
&\leq 1/2.
\end{align*}
This implies the expression
\[\sum_{i=0}^{\infty}(\mathrm{Id}-(1/2)R_{\infty})^{i}\]
is a well defined linear map with norm at most $2$. An easy computation then shows it is the inverse of $(1/2)R_{\infty}$. Hence the map $R_{\infty}$ is invertible with $\|R_{\infty}^{-1}\|\leq 1$.
\end{proof}

A rank one operator $S \in L(\mathbb{R}^{n-1}, \mathbb{R}^{n})$ can be written as $S=x\otimes e$ where $x \in \mathbb{R}^{n}$ and $e \in \mathbb{R}^{n-1}$. It acts by $(x \otimes e)(u)=\langle e,u \rangle x$. We note $L(x\otimes e)=Lx\otimes e$ for a linear map $L$ defined on $\mathbb{R}^{n}$. Indeed, for any $u \in \mathbb{R}^{n-1}$,
\[L(x\otimes e)u=L(\langle e, u \rangle x)=\langle e, u \rangle Lx = (Lx \otimes e)(u).\]
Also $\langle R,w \otimes e\rangle_{H}=\langle Re,w\rangle$ for any $w \in \mathbb{R}^{n}$ and $R \in L(\mathbb{R}^{n-1}, \mathbb{R}^{n})$. To see this let $e_{i}$ be an orthonormal basis of $\mathbb{R}^{n-1}$. Then,
\[\langle R,w \otimes e\rangle_{H} = \sum_{i=1}^{n-1} \langle R(e_{i}), \langle e_{i}, e \rangle w \rangle = \langle R(\sum_{i=1}^{n-1} \langle e_{i}, e \rangle e_{i}), w \rangle = \langle Re, w\rangle.\]

Using these facts, the system of equations \eqref{unknownoperator} for the unknown $Lx$ can be written
\[\langle R_{\infty}e,Lx\rangle = L_{\Theta}(x \otimes e), \mbox{ for all } e \in \mathbb{R}^{n-1}.\]
Since $R_{\infty}$ is invertible this can be written as
\begin{equation}\label{unknownoperatorfinal}
\langle e,Lx\rangle = L_{\Theta}(x \otimes (R_{\infty}^{-1}e)) \mbox{ for all } e \in \mathbb{R}^{n-1}.
\end{equation}
For a fixed $x$, the map $e\mapsto L_{\Theta}(x \otimes (R_{\infty}^{-1}e))$ defines a linear functional on $\mathbb{R}^{n-1}$ and the left hand side of \eqref{unknownoperatorfinal} says this linear functional is represented by $Lx \in \mathbb{R}^{n-1}$. Hence $Lx$ is uniquely defined and linearity of \eqref{unknownoperatorfinal} in $x$ shows that $L$ is linear. Further, $L$ is bounded as $|Lx|\leq \|L_{\Theta}\|\|R_{\infty}^{-1}\||x|$. Hence we have shown the following lemma.

\begin{lemma}\label{unique}
The system of equations \eqref{unknownoperator} uniquely defines an operator $L \in L(\mathbb{R}^{n}, \mathbb{R}^{n-1})$ with $\|L\|\leq \|L_{\Theta}\|\|R_{\infty}^{-1}\|\leq \|L_{\Theta}\|$.
\end{lemma}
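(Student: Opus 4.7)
The plan is to organize the observations already laid out in the paragraph preceding the lemma into a clean existence--uniqueness--norm argument, proceeding in three short steps.

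First, I would exploit linearity of \eqref{unknownoperator} in the test operator $S$: since every $S \in L(\mathbb{R}^{n-1},\mathbb{R}^n)$ is a finite sum of rank-one maps, it suffices to verify the equation on $S=x\otimes e$ with $x\in\mathbb{R}^n$ and $e\in\mathbb{R}^{n-1}$. Substituting the identities $L(x\otimes e)=Lx\otimes e$ and $\langle R,w\otimes e\rangle_H=\langle Re,w\rangle$ already derived in the excerpt, the system becomes $\langle R_\infty e,Lx\rangle = L_{\Theta}(x\otimes e)$ for all such $x,e$. Because $R_\infty$ is invertible (preceding lemma), the substitution $e\mapsto R_\infty^{-1}e$ puts the system in the equivalent form \eqref{unknownoperatorfinal}: $\langle e,Lx\rangle = L_{\Theta}(x\otimes R_\infty^{-1}e)$.

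Second, existence, uniqueness, and linearity of $L$ fall out directly from \eqref{unknownoperatorfinal}. For each fixed $x\in\mathbb{R}^n$, the right-hand side $e\mapsto L_{\Theta}(x\otimes R_\infty^{-1}e)$ is a linear functional on the finite-dimensional inner-product space $\mathbb{R}^{n-1}$, so the standard (Riesz) identification of $(\mathbb{R}^{n-1})^*$ with $\mathbb{R}^{n-1}$ produces a unique vector, which must be $Lx$, representing this functional. Linearity of the map $x\mapsto Lx$ is immediate because $x\mapsto x\otimes R_\infty^{-1}e$ is linear for every $e$.

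Third, for the operator bound I would use the elementary fact $\|x\otimes e\|=|x||e|$ together with boundedness of the linear functional $L_{\Theta}$. Testing \eqref{unknownoperatorfinal} with $e=Lx$ yields
\[|Lx|^{2} = L_{\Theta}(x\otimes R_\infty^{-1}Lx) \leq \|L_{\Theta}\|\,|x|\,\|R_\infty^{-1}\|\,|Lx|,\]
whence $|Lx|\leq \|L_{\Theta}\|\|R_\infty^{-1}\||x|$, giving $\|L\|\leq \|L_{\Theta}\|\|R_\infty^{-1}\|$; the second inequality $\|R_\infty^{-1}\|\leq 1$ is exactly the content of the preceding lemma. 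There is essentially no obstacle here: the work was done in building $R_\infty$ and proving its invertibility, and the only thing worth checking is that the rank-one reduction is genuinely an equivalence, which holds because \eqref{unknownoperator} is a system of scalar-valued equations indexed linearly by $S$.
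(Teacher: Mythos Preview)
Your proposal is correct and follows the paper's own argument essentially verbatim: the rank-one reduction, the rewriting via $R_\infty^{-1}$ to \eqref{unknownoperatorfinal}, and the Riesz-type identification are exactly what the paper does in the paragraph preceding the lemma. Your norm bound via testing $e=Lx$ is a clean way to make explicit the inequality $|Lx|\le \|L_\Theta\|\|R_\infty^{-1}\||x|$ that the paper simply asserts.
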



Even though we do not know that $f'(x_{\infty})$ exists, some arguments leading to
\eqref{derivativeequation} can still be used to prove

\begin{lemma}\label{directionalknown}
$f'(x_{\infty},T_{\infty})=LT_{\infty}$.
\end{lemma}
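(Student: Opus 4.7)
The directional derivative $L_{\infty} = f'(x_{\infty}; T_{\infty})$ exists because $(x_{\infty}, T_{\infty}) \in D$, so the identity $L_{\infty} = L T_{\infty}$ is meaningful. The plan is to derive an equation analogous to \eqref{derivativeequation} along a restricted family of variations of $T$ with $x = x_\infty$ fixed; we cannot afford arbitrary variations $T_{\infty} + tS$ because we do not know that $f$ is regularly differentiable at $x_{\infty}$ in such a direction.

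The variations I would use are $T_{t} := T_{\infty}(\mathrm{Id} + tA)$ for $A \in L(\mathbb{R}^{n-1},\mathbb{R}^{n-1})$ and small $|t|$. Regular differentiability of $f$ at $x_{\infty}$ in direction $T_{t}$, with $f'(x_{\infty}; T_{t}) = L_{\infty}(\mathrm{Id} + tA)$, is obtained by substituting $(\mathrm{Id}+tA)v$ in place of $v$ in the regular differentiability of $f$ at $x_{\infty}$ in direction $T_{\infty}$. Together with the bounds of Lemma \ref{dtlestimatesapplied}, this ensures $(x_{\infty}, T_{t}) \in D_{0}$ for all sufficiently small $|t|$.

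Next, observe that the $\Phi$-summand and every $\Delta_{i}$-summand in $h_{\infty}(x_{\infty}, \cdot)$ depend only on $x$: indeed $\Delta_{i}((x_{i},T_{i}),(x,T))$ involves $\|f_{x,T_{i}} - f_{x_{i},T_{i}}\|_{r}$, which is independent of $T$. Hence $t \mapsto h_{\infty}(x_{\infty}, T_{t})$ differs only by a constant from the sum of the differentiable functions $-\|L_{\infty}(\mathrm{Id}+tA)\|_{H}^{2}$, $\sum_{i} \beta_{i} \Theta(T_{t} - T_{i})$, and $\sum_{i} \gamma_{i} \|L_{\infty}(\mathrm{Id}+tA) - L_{i}\|_{H}^{2}$, so it is differentiable at $t = 0$. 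Since $t=0$ is a local minimum of this function in $D_{0}$, differentiating and rearranging with the definition $R_{\infty} = 2L_{\infty} + 2\sum_{i} \gamma_{i}(L_{i} - L_{\infty})$ yields
\[\langle R_{\infty}, L_{\infty} A\rangle_{H} = L_{\Theta}(T_{\infty} A) \quad \text{for every } A \in L(\mathbb{R}^{n-1},\mathbb{R}^{n-1}).\]

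Applying the defining equation \eqref{unknownoperator} with $S = T_{\infty} A$ gives $\langle R_{\infty}, (LT_{\infty}) A\rangle_{H} = L_{\Theta}(T_{\infty} A)$. Subtracting, $\langle R_{\infty}, (L_{\infty} - LT_{\infty}) A\rangle_{H} = 0$ for every $A \in L(\mathbb{R}^{n-1},\mathbb{R}^{n-1})$; testing against matrix units shows this forces $R_{\infty}^{T}(L_{\infty} - LT_{\infty}) = 0$, and invertibility of $R_{\infty}$ then gives $L_{\infty} = L T_{\infty}$. The main obstacle is exactly the restriction noted at the outset: we cannot vary $T$ in all of $L(\mathbb{R}^{n-1}, \mathbb{R}^{n})$, but the $(n-1)^{2}$-dimensional family $T_{\infty}(\mathrm{Id} + tA)$ happens to furnish precisely the right number of linear conditions to pin down the $(n-1) \times (n-1)$ matrix $L T_{\infty}$.
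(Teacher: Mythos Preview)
Your proof is correct and follows essentially the same approach as the paper's. The paper varies $T$ over all of $L(\mathbb{R}^{n-1},V)$ where $V$ is the range of $T_{\infty}$, which, since $T_{\infty}$ is an isomorphism onto $V$ (Proposition~\ref{consequencesofassumptions}(2)), is exactly your family $\{T_{\infty}A : A\in L(\mathbb{R}^{n-1},\mathbb{R}^{n-1})\}$; the paper then invokes the uniqueness argument of Lemma~\ref{unique} with $V$ in place of $\mathbb{R}^{n}$, which is the same invertibility-of-$R_{\infty}$ reasoning you carry out directly by subtracting the two equations.
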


\begin{proof}
Since $f$ is regularly differentiable
at $x_\infty$ in the direction of $T_\infty$, it is regularly differentiable at 
$x_\infty$ in the direction of $T$ for every $T\in L(\mathbb{R}^{n-1},V)$
where $V$ is the range of $T_\infty$. Hence, if $T\in L(\mathbb{R}^{n-1},V)$ and $\|T-T_\infty\|<\frac12\eta_0$,
we have $(x_\infty,T)\in D_0$ and the arguments above show that \eqref{derivativeequation}
holds for $S\in L(\mathbb{R}^{n-1},V)$.
Defining $\tilde L \in L(V, \mathbb{R}^{n-1})$ by $\tilde L(v)=f'(x_\infty;v)$, 
this says
\[
\langle R_{\infty},\tilde LS\rangle_{H}=L_{\Theta}S \mbox{ for all }S \in L(\mathbb{R}^{n-1}, V).
\]
Replacing $\mathbb{R}^n$ by $V$ in the proof of Lemma~\ref{unique} shows that this system has a unique solution;
since the restriction of $L$ to $V$ solves it, we get the required conclusion
$f'(x_\infty;v)=\tilde L(v)=L(v)$ for every $v\in V$.  
\end{proof}

In order to prove Proposition \ref{mainprop} it suffices to prove the following proposition.

\begin{proposition}\label{finalprop}
$f$ is differentiable at $x_{\infty} \in N$ and $f'(x_{\infty})=L$.
\end{proposition}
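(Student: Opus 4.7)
I would prove Proposition~\ref{finalprop} by contradiction. The key reduction is that once differentiability of $f$ at $x_{\infty}$ is established, the identity $f'(x_{\infty})=L$ is automatic: minimality of $h_{\infty}$ at $(x_{\infty},T_{\infty})$ together with differentiability of $T\mapsto\|f'(x_{\infty};T)\|_{H}^{2}$ forces the derivative equation \eqref{derivativeequation} to hold for \emph{all} $S\in L(\mathbb{R}^{n-1},\mathbb{R}^{n})$, and Lemma~\ref{unique} then yields $f'(x_{\infty})=L$. So it suffices to derive a contradiction from the assumption that $f$ is not differentiable at $x_{\infty}$. Non-differentiability, combined with Lemma~\ref{directionalknown} (which rules out the tangential directions lying in $V:=T_{\infty}(\mathbb{R}^{n-1})$), produces $\tau>0$ and vectors $v_{k}\to 0$ in $\mathbb{R}^{n}$ with $|f(x_{\infty}+v_{k})-f(x_{\infty})-Lv_{k}|>\tau|v_{k}|$ and the component of $v_{k}$ perpendicular to $V$ bounded below by a fixed fraction of $|v_{k}|$. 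The target is to build $(x,T)\in D_{0}$ near $(x_{\infty},T_{\infty})$ with $h_{\infty}(x,T)<h_{\infty}(x_{\infty},T_{\infty})$.

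\textbf{Construction of $(x,T)$.} For a small scaling parameter $\zeta_{k}>0$ to be tuned later I would set
\[
T=T_{\infty}+\zeta_{k}\,\frac{v_{k}}{|v_{k}|}\otimes e,
\]
where $e\in\mathbb{R}^{n-1}$ is chosen so that $T\mathbb{R}^{n-1}$ contains the direction of $v_{k}$ and so that the induced first-order change in $\|f'(\cdot;T)\|_{H}^{2}$ is properly aligned with the non-differentiability. For the point $x$ I would pick an affine map $A\in\mathcal{A}$ close to the identity, aligned so that $A(\mathbb{R}^{n-1}\times\{0\})$ approximately equals $V$ and the $A$-image of the last coordinate direction is close to the unit normal of $V$. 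The pushforward $\nu=A_{*}(\mathcal{L}^{n-1}\times\mu)$ is a doubling measure supported in $A(\mathbb{R}^{n-1}\times\widetilde{N})\subset N$. On a small ball around $x_{\infty}$, Proposition~\ref{doublingporousnull} combined with Lemma~\ref{irregularporous} gives $\nu$-full measure to regular points of $f$; Proposition~\ref{integralregularity} gives $\nu$-full measure to points whose regularity defect on the relevant scales $s_{i}$ is below the target threshold; and the classical Rademacher theorem applied slice-by-slice along the $\mathbb{R}^{n-1}$-factor, combined with Lemma~\ref{regularpointdiff}, makes $f$ regularly differentiable at $\nu$-almost every point in direction $T$. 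Intersecting these $\nu$-full subsets and taking any $x\in N$ in the intersection produces the required point; Lemma~\ref{dtlestimatesapplied} ensures $(x,T)\in D_{0}$.

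\textbf{Estimating the difference.} Rewriting \eqref{changeinh} as
\[
h_{\infty}(x_{\infty},T_{\infty})-h_{\infty}(x,T)=\bigl(\|f'(x;T)\|_{H}^{2}-\|L_{\infty}\|_{H}^{2}\bigr)-\Phi(x)-\Psi(T)-\Upsilon(f'(x;T))-\Delta(x),
\]
I would Taylor-expand $\|f'(x;T)\|_{H}^{2}$ about $(x_{\infty},T_{\infty})$ and invoke the derivative equation \eqref{derivativeequation}, which by Lemma~\ref{directionalknown} is valid for $S\in L(\mathbb{R}^{n-1},V)$; this makes the first-order tangential contributions cancel the linear variations of $\Psi(T)$ and $\Upsilon(f'(x;T))$ in those directions. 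What remains is a positive second-order contribution of order $\tau^{2}\zeta_{k}^{2}$ coming from the perpendicular non-differentiability. The cost $\Phi(x)\le d_{N}(x,x_{\infty})\sum_{i}\lambda_{i}$ is controlled by choosing $x$ sufficiently $d_{N}$-close to $x_{\infty}$; the cost $\Delta(x)$ is controlled by the regularity-defect bound on $x$ and the Section~7 choice of $\sigma_{i}$; the remaining quadratic costs in $\Psi$ and $\Upsilon$ are $O(\zeta_{k}^{2})$ with explicit coefficients. Tuning $\zeta_{k}$ small against $s_{i}$ and then choosing $x$ with small enough regularity defect yields $h_{\infty}(x,T)<h_{\infty}(x_{\infty},T_{\infty})$, the desired contradiction.

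\textbf{Main obstacle.} The decisive technical step is manufacturing $x\in N$ with \emph{all} required properties simultaneously: close to $x_{\infty}$ in the complete metric $d_{N}$, $f$ regularly differentiable at $x$ in direction $T$ with derivative $d$-close to $L_{\infty}$, and with regularity defect below a prescribed threshold on the scales $s_{i}$ that appear in $\Delta(x)$. This is where the specific structure of $N$ (containing affine copies of $\mathbb{R}^{n-1}\times\widetilde{N}$), the availability of the singular doubling measure $\mu$, the $\sigma$-porosity of irregular points, and the maximal-function bound of Proposition~\ref{integralregularity} all have to cooperate. Balancing these qualitative facts against the quantitative parameter choices of Section~7 so that the second-order gain $\tau^{2}\zeta_{k}^{2}$ actually dominates every cost term is the heart of the argument.
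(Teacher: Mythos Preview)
Your plan has a genuine gap at exactly the point you flag as the main obstacle, but the gap is more fundamental than you seem to realize. You propose to find a \emph{single} pair $(x,T)\in D_{0}$ with $h_{\infty}(x,T)<h_{\infty}(x_{\infty},T_{\infty})$ by choosing $x$ generically (in the sense of $\nu$-full measure) near $x_{\infty}$, and you assert that Taylor expansion of $\|f'(x;T)\|_{H}^{2}$ about $(x_{\infty},T_{\infty})$ leaves ``a positive second-order contribution of order $\tau^{2}\zeta_{k}^{2}$ coming from the perpendicular non-differentiability.'' But there is no mechanism that produces such a contribution. First, you cannot Taylor expand in $T$ at $(x_{\infty},T_{\infty})$: under the non-differentiability hypothesis the map $T\mapsto f'(x_{\infty};T)$ is only defined on $L(\mathbb{R}^{n-1},V)$, not on a neighbourhood of $T_{\infty}$ in $L(\mathbb{R}^{n-1},\mathbb{R}^{n})$. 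Second, and more seriously, the non-differentiability datum is a \emph{two-point} statement $|f(x_{\infty}+v_{k})-f(x_{\infty})-Lv_{k}|>\tau|v_{k}|$; at a single generic point $x$ near $x_{\infty}$ the derivative $f'(x;T)$ carries no a priori information about this increment. A mean-value argument would locate \emph{some} point on the segment $[x_{\infty},x_{\infty}+v_{k}]$ with large directional derivative, but that point is uncontrollable and there is no reason it should simultaneously lie in $N$, be a regular point of $f$, and have small regularity defect.

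The paper avoids this by \emph{averaging} rather than pointing. It builds an affine map $\varphi$ and a box $Q=\Omega\times(-tr,tr)$ so that the segment witnessing non-differentiability is essentially $\varphi((-r,0)e)$, and then shows
\[
\dashint_{Q}\bigl(h_{\infty}(x_{\infty},T_{\infty})-h_{\infty}(\varphi,\varphi_{n-1}')\bigr)\,\mathrm{d}(\mathcal{L}^{n-1}\times\mu)>0.
\]
The gain is not second order but \emph{first order}: the divergence theorem gives $\dashint_{Q} g_{n-1}'\approx w\otimes e$ with $w=(f(x_{\infty}+x)-f(x_{\infty}))/r$, so the leading term $\dashint_{Q}\langle R_{\infty},g_{n-1}'\rangle_{H}\approx\langle R_{\infty}e,w\rangle>4\tau$ directly encodes the non-differentiability increment. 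The pointwise conditions you list (membership in $N$, regular differentiability in direction $\varphi_{n-1}'$, controlled regularity defect) are used only to ensure that the exceptional sets have small or zero $\mathcal{L}^{n-1}\times\mu$ measure, hence do not spoil the average; and Proposition~\ref{integralregularity} is applied not to get a full-measure statement but to bound the measure of the set where $\Delta(\varphi)$ is large by $\int_{Q}\|g_{n-1}'\|^{2}$, a quantity which is itself controlled by the same averaging. Once the average is positive, some $u\in Q$ with $(\varphi(u),\varphi_{n-1}'(u))\in D_{0}$ violates minimality. Your outline needs to replace the single-point selection by this averaging device; without it the ``positive contribution'' has no source.
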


To prove Proposition \ref{finalprop} we argue by contradiction. Assume that $L$ is not the derivative of $f$ at $x_{\infty}$.

\begin{lemma}\label{badpoint}
There is $\varepsilon > 0$ such that for every $\delta > 0$ one may find $x \in \mathbb{R}^{n}$ with $|x|<\delta$ such that
\begin{itemize}
	\item $Lx=0$.
	\item $|f(x_{\infty}+x)-f(x_{\infty})|>\varepsilon |x|$.
\end{itemize}
\end{lemma}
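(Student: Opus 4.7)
The plan is to argue by contradiction. Suppose the lemma fails; then for every $\varepsilon > 0$ there exists $\delta > 0$ such that $|f(x_\infty + x) - f(x_\infty)| \le \varepsilon |x|$ whenever $x \in \ker L$ and $|x| < \delta$. I will show this forces $L$ to be the derivative of $f$ at $x_\infty$, which contradicts the standing assumption stated just before the lemma.

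The first task is to analyse the geometry of $L$. By Lemma~\ref{directionalknown}, $f'(x_\infty; T_\infty) = L T_\infty$, and by Proposition~\ref{consequencesofassumptions}(2), $f'(x_\infty; T_\infty)$ is a linear isomorphism of $\mathbb{R}^{n-1}$. Writing $V$ for the $(n-1)$-dimensional range of $T_\infty$, the restriction $L|_V = f'(x_\infty; T_\infty) \circ T_\infty^{-1}$ is therefore a linear isomorphism from $V$ onto $\mathbb{R}^{n-1}$. In particular $V \cap \ker L = \{0\}$, so $\mathbb{R}^n = V \oplus \ker L$, the kernel is a line spanned by some unit vector $e$, and there is a constant $C_0$ such that the induced decomposition $y = v + t e$ (with $v \in V$, $t \in \mathbb{R}$) always satisfies $|v| + |t| \le C_0 |y|$.

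Next, fix $\varepsilon_0 > 0$. For small $y = v + te$, since $Le = 0$ I split
\[
f(x_\infty + y) - f(x_\infty) - L y = \bigl[f(x_\infty + v + t e) - f(x_\infty + v)\bigr] + \bigl[f(x_\infty + v) - f(x_\infty) - L v\bigr].
\]
Regular differentiability of $f$ at $x_\infty$ in direction $T_\infty$, applied with $u = T_\infty^{-1} v$ and with $z = te$ and $z = 0$ separately, bounds each of the quantities $f(x_\infty + te + v) - f(x_\infty + te) - Lv$ and $f(x_\infty + v) - f(x_\infty) - Lv$ by $\varepsilon(\|T_\infty^{-1}\||v| + |t|)$ for sufficiently small arguments. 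Subtracting these controls the cross expression $\bigl[f(x_\infty + v + te) - f(x_\infty + v)\bigr] - \bigl[f(x_\infty + te) - f(x_\infty)\bigr]$; combining with the hypothesis for contradiction applied to $te \in \ker L$, which yields $|f(x_\infty + te) - f(x_\infty)| \le \varepsilon_1 |t|$, and adding back the directly controlled second bracket, I arrive at $|f(x_\infty + y) - f(x_\infty) - L y| \le C_0 (\varepsilon_1 + K\varepsilon) |y|$ with $K = K(\|T_\infty^{-1}\|)$ fixed. Choosing $\varepsilon$ and $\varepsilon_1$ small enough at the outset makes the right-hand side at most $\varepsilon_0|y|$, producing the desired contradiction with non-differentiability.

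The main conceptual point, and the step I regard as essential, is that regular (rather than merely ordinary) differentiability of $f$ at $x_\infty$ in direction $T_\infty$ is precisely what allows the assumed kernel-direction bound at the base point $x_\infty$ to be transferred to the shifted base point $x_\infty + v$ with only an $O(|v|)$ loss; this is what turns information about the one-dimensional subspace $\ker L$ into a two-sided differentiability estimate. The remainder of the argument is a straightforward bookkeeping exercise in constants, which I do not expect to cause serious difficulty.
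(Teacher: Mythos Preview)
Your proof is correct and uses essentially the same idea as the paper: the decomposition $\mathbb{R}^n = V \oplus \ker L$ (the paper realises the projection onto $\ker L$ along $V$ explicitly as $\tilde x \mapsto \tilde x - T_\infty L_\infty^{-1} L\tilde x$) together with regular differentiability of $f$ at $x_\infty$ in direction $T_\infty$. The only difference is organisational: the paper argues directly, starting from a point $\tilde x$ where the derivative estimate fails and projecting it to $\ker L$, whereas you run the contrapositive, assuming good behaviour on $\ker L$ and upgrading it to full differentiability.
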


\begin{proof}
Let $c=1+\|T_{\infty}L_{\infty}^{-1}L\|+\|L_{\infty}^{-1}L\|$. Since $L$ is not the derivative of $f$ at $x_{\infty}$, there is $\varepsilon > 0$ such that for every $\delta>0$ there exists $\widetilde{x} \in \mathbb{R}^{n}$ with $|\widetilde{x}|<\delta$ and
\begin{equation}\label{notderivative}
|f(x_{\infty}+\widetilde{x})-f(x_{\infty})-L\widetilde{x}|>2\varepsilon c |\widetilde{x}|.
\end{equation}

Suppose $\delta>0$ is fixed. Since $f$ is regularly differentiable at $x_{\infty}$ in direction $T_{\infty}$ there exists $\eta>0$ such that
\begin{equation}\label{reginnotderivative}
|f(x_{\infty}+z+T_{\infty}u)-f(x_{\infty}+z)-L_{\infty}u|\leq \varepsilon (|z|+|u|)
\end{equation}
whenever $z \in \mathbb{R}^{n}, u \in \mathbb{R}^{n-1}$ and $|z|+|u|<\eta$.

Choose $\widetilde{x} \in \mathbb{R}^{n}$ such that $c|\widetilde{x}|<\min \{\delta,\, \eta\}$ and \eqref{notderivative} holds. Define the point $x=\widetilde{x}-T_{\infty}L_{\infty}^{-1}L\widetilde{x}$. Since $LT_{\infty}=L_{\infty}$ by Lemma \ref{directionalknown}, it follows $Lx=0$. Further,
\[|x|+|L_{\infty}^{-1}L\widetilde{x}| \leq |\widetilde{x}|+|T_{\infty}L_{\infty}^{-1}L\widetilde{x}|+|L_{\infty}^{-1}L\widetilde{x}|\leq c|\widetilde{x}|<\min \{\delta,\, \eta \}.\]

Hence we may apply \eqref{reginnotderivative} with $z=x$ and $u=L_{\infty}^{-1}L\widetilde{x}$ to obtain
\begin{align*}
&|f(x_{\infty}+\widetilde{x})-f(x_{\infty}+x)-L\widetilde{x}|\\
&\qquad =|f(x_{\infty}+x+T_{\infty}L_{\infty}^{-1}L\widetilde{x})-f(x_{\infty}+x)-L\widetilde{x}|\\
&\qquad \leq \varepsilon(|x|+|L_{\infty}^{-1}L\widetilde{x}|) \leq \varepsilon c |\widetilde{x}|.
\end{align*}

By combining this with \eqref{notderivative} and using the triangle inequality we obtain
\begin{align*}
& |f(x_{\infty}+x)-f(x_{\infty})|\\
&\qquad \geq |f(x_{\infty}+\widetilde{x})-f(x_{\infty})-L\widetilde{x}|-|f(x_{\infty}+\widetilde{x})-f(x_{\infty}+x)-L\widetilde{x}|\\
&\qquad > \varepsilon c|\widetilde{x}| \geq \varepsilon |x|
\end{align*}
as required.
\end{proof}

We now define various parameters and a region in which to look for a pair $(x,T)\in D_{0}$ with $h_{\infty}(x,T)<h_{\infty}(x_{\infty},T_{\infty})$. We fix $0<\varepsilon <1$ with the property from Lemma \ref{badpoint}. By differentiability of $\Theta_{\infty}$ we find $K \geq 1/\varepsilon$ such that the parameter
\[\kappa=\frac{s_{K}}{48\mathrm{Lip}(f)}\]
satisfies $\kappa < \eta_{0}/4$ and
\begin{equation}\label{thetainfinity}
\Theta_{\infty}(T_{\infty}+S)-\Theta_{\infty}(T_{\infty}) \leq L_{\Theta}S + \frac{\varepsilon}{8}\|S\|
\end{equation}
whenever $\|S\| \leq \kappa$. Let $\tau=\varepsilon \kappa /4$,
\[t=\frac{\tau}{22n\mathrm{Lip}(f) (1+\|R_{\infty}\|_{H})},\]
and, using Lemma \ref{doublingfills}, choose $0<\xi \leq t/6$ such that for every $\theta>0$ we have
\[\frac{\mu(B(0,\theta(1-24\xi/ts_{K})))}{\mu(B(0,\theta))}(1-24n\xi/s_{K})\geq 1-\tau.\]

Since $f$ is regularly differentiable at each $x_{i}$ in the direction of $T_{i}$, there is $\delta_{1}>0$ such that for all $i=0, \ldots, K$ and $i=\infty$,
\begin{equation}\label{xiregdiff}
|f(x_{i}+z+T_{i}v)-f(x_{i}+z)-L_{i}v| \leq \xi (|z|+|v|)
\end{equation}
whenever $z \in \mathbb{R}^{n}, v \in \mathbb{R}^{n-1}$ and $|z|+|v|<\delta_{1}$.

Let $\delta_{2}>0$ be such that
\[\delta_{2} \leq \min \left\{ \frac{\kappa \delta_{1}}{6},\, \frac{\kappa \eta_{0}}{6},\, \frac{\delta_{1} \kappa \tau}{24\mathrm{Lip}(f)}\right\}\]
and $z \in N,\,|z-x_{\infty}|\leq 3\delta_{2}/\kappa$ implies
\[d_{N}(z,x_{\infty}) \leq \min\left\{\frac{\tau}{\sum_{i=0}^{\infty}\lambda_{i}},\, \frac{\eta_{0}}{2}\right\}.\]

Use Lemma \ref{badpoint} to find $x \in \mathbb{R}^{n}$ such that $|x|<\delta_{2}$, $Lx=0$ and
\[|f(x_{\infty}+x)-f(x_{\infty})|>\varepsilon |x|.\]

Denote
\[r=\frac{|x|}{\kappa}, \quad w=\frac{1}{r}(f(x_{\infty}+x)-f(x_{\infty})), \quad e=\frac{R_{\infty}^{-1} w}{|R_{\infty}^{-1}w|},\]
and
\[\Omega=\{u \in \mathbb{R}^{n-1}: -r<\langle e,u\rangle<0,\, |u-\langle e,u\rangle e|<r\}.\]

We have, since $\|R_{\infty}^{-1}\|\leq 1$,
\[\langle R_{\infty}e,w\rangle=\frac{|w|^{2}}{|R_{\infty}^{-1}w|} \geq |w| >\varepsilon \kappa = 4 \tau.\]

Temporarily fix $\widetilde{e} \in \mathbb{R}^{n-1}$, $\widetilde{x} \in \mathbb{R}^{n}$ and $\widetilde{T}_{\infty} \in L(\mathbb{R}^{n-1}, \mathbb{R}^{n})$ close to $e$, $x$ and $T_{\infty}$ respectively. Define an affine map $\gamma\colon \mathbb{R}^{n-1} \to \mathbb{R}^{n}$ with $\mathrm{Lip}(\gamma) \leq 2 \kappa$ by
\begin{equation}\label{defgamma}\gamma(u) =\widetilde{x}+\langle \widetilde{e}, u \rangle \widetilde{x}/r\end{equation}
and let $\varphi\colon \mathbb{R}^{n} \to \mathbb{R}^{n}$ be an affine change of coordinates given by
\begin{equation}\label{defvarphi}\varphi(u)=x_{\infty}+\gamma(\pi_{n-1}u)+\widetilde{T}_{\infty}\pi_{n-1}u+(0,\pi^{n-1}u)\end{equation}
where $\pi_{n-1}$ and $\pi^{n-1}$ are the orthogonal projections onto the first $n$ coordinates and the final coordinate respectively. That is, for $a \in \mathbb{R}^{n-1}$ and $b \in \mathbb{R}$, we have $\pi_{n-1}(a,b)=a$ and $\pi^{n-1}(a,b)=b$.

Given $u \in \mathbb{R}^{n}$ we define $\varphi_{n-1}'(u) \in L(\mathbb{R}^{n-1},\mathbb{R}^{n})$ to be the derivative, at $\pi_{n-1}(u)$ of the map $\mathbb{R}^{n-1} \to \mathbb{R}^{n}$ given by $v\mapsto \varphi(v,\pi^{n-1}(u))$.

Note that,
\[\varphi(u)=x_{\infty}+\widetilde{x} + \kappa \langle \widetilde{e},u\rangle \frac{\widetilde{x}}{|x|}+\widetilde{T}_{\infty}\pi_{n-1}u+(0,\pi^{n-1}u)\]
and
\[\varphi_{n-1}'(u)=\kappa \frac{\widetilde{x}\otimes \widetilde{e}}{|x|}+\widetilde{T}_{\infty}\]
for $u \in \mathbb{R}^{n}$. Let $Q=\Omega \times (-tr,tr)$.

\begin{lemma}\label{goodplane}
Let $\alpha>0$. Then there exists $\widetilde{e} \in \mathbb{R}^{n-1}$, $\widetilde{x} \in \mathbb{R}^{n}$, and a linear map $\widetilde{T}_{\infty} \in L(\mathbb{R}^{n-1}, \mathbb{R}^{n})$ such that
\[|\widetilde{e}-e|+|\widetilde{x}-x|+\|\widetilde{T}_{\infty}-T_{\infty}\|<\alpha\]
and, for $\mathcal{L}^{n-1} \times \mu$ almost every $u \in Q$,
\begin{itemize}
	\item The function $f$ is regularly differentiable at $\varphi(u)$ in the direction $\varphi_{n-1}'(u)$,
	\item $\varphi(u) \in N$.
\end{itemize}
\end{lemma}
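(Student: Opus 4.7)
My plan is to arrange that $\varphi$ itself coincides with a member of the countable family $\mathcal{A}$ from Definition~\ref{nullset}; once this is done, both required conclusions will reduce to standard applications of Rademacher's theorem, Fubini's theorem, and the measure-theoretic facts about doubling measures and porous sets established in Sections~2 and 4.

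Observe first that $\varphi$ is affine with $\varphi_{n-1}'(u)=\widetilde{T}:=\widetilde{T}_{\infty}+\kappa(\widetilde{x}/|x|)\otimes\widetilde{e}$ independent of $u$, and that the $n\times n$ matrix of the linear part of $\varphi$ in the standard basis has last column $(0,\ldots,0,1)^{T}$ and first $n-1$ columns given by those of the $n\times(n-1)$ matrix $\widetilde{T}$. To force $\varphi\in\mathcal{A}$ while keeping the total perturbation under $\alpha$, I would first pick $\widetilde{x}$ in the dense set $\mathbb{Q}^{n}-x_{\infty}$ within distance $\alpha/3$ of $x$, making the translation $x_{\infty}+\widetilde{x}$ rational; then pick $\widetilde{e}$ within $\alpha/3$ of $e$ arbitrarily. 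With these choices fixed, the set of $\widetilde{T}_{\infty}$ for which every column of $\widetilde{T}$ lies in $\mathbb{Q}^{n}$ is a translate of $(\mathbb{Q}^{n})^{n-1}$ and hence dense in $L(\mathbb{R}^{n-1},\mathbb{R}^{n})$; since invertibility of the resulting $n\times n$ matrix is an open condition, a small adjustment lets me pick $\widetilde{T}_{\infty}$ within $\alpha/3$ of $T_{\infty}$ that simultaneously meets the density constraint and makes $\varphi$ a bijective element of $\mathcal{A}$.

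With $\varphi\in\mathcal{A}$ bijective, the second assertion is immediate: $\varphi(\mathbb{R}^{n-1}\times\widetilde{N})\subseteq N$ by Definition~\ref{nullset}, and because $\widetilde{N}$ has full $\mu$-measure on $\mathbb{R}$, Fubini applied to $\mathcal{L}^{n-1}\times\mu$ on $Q=\Omega\times(-tr,tr)$ yields $\varphi(u)\in N$ for $(\mathcal{L}^{n-1}\times\mu)$-almost every $u\in Q$. For the first assertion I would invoke Lemma~\ref{regularpointdiff} to reduce regular differentiability of $f$ at $\varphi(u)$ in direction $\widetilde{T}$ to (i)~directional differentiability of $f$ at $\varphi(u)$ in direction $\widetilde{T}$ and (ii)~regularity of $\varphi(u)$ as a point of $f$. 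For (i), fixing $b$, the map $a\mapsto f(\varphi(a,b))$ is Lipschitz from $\mathbb{R}^{n-1}$ to $\mathbb{R}^{n-1}$ and hence differentiable at $\mathcal{L}^{n-1}$-almost every $a$ by Rademacher; injectivity of $\widetilde{T}$ (inherited from $T_{\infty}$ via Proposition~\ref{consequencesofassumptions}(2) for sufficiently small $\alpha$) then upgrades this to directional differentiability of $f$ at $\varphi(a,b)$ in direction $\widetilde{T}$, and a Fubini argument against $\mathcal{L}^{n-1}\times\mu$ handles the remaining variable. For (ii), Lemma~\ref{irregularporous} says the set of irregular points of $f$ is $\sigma$-porous; since $\varphi$ is bi-Lipschitz, its preimage is also $\sigma$-porous, and Proposition~\ref{doublingporousnull} applied to the doubling product measure $\mathcal{L}^{n-1}\times\mu$ on $\mathbb{R}^{n}$ forces this preimage to have measure zero.

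The only step with genuine content is the simultaneous rationality-plus-invertibility perturbation in Step~1, which is a density-plus-openness exercise; the main obstacle I anticipate is the bookkeeping to ensure all the open non-degeneracy conditions (injectivity of $\widetilde{T}$, invertibility of the full linear part of $\varphi$, and staying inside the $\alpha$-ball around $(e,x,T_{\infty})$) are jointly compatible with the countable-dense constraints imposed by membership in $\mathcal{A}$. Everything else is a packaging of tools already in place.
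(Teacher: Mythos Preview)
Your proposal is correct and follows essentially the same approach as the paper: perturb $(e,x,T_\infty)$ so that $\varphi$ becomes a bilipschitz member of the family $\mathcal{A}$, then deduce $\varphi(u)\in N$ from Definition~\ref{nullset} via Fubini, and regular differentiability from Rademacher plus Lemma~\ref{irregularporous}, Proposition~\ref{doublingporousnull}, and Lemma~\ref{regularpointdiff}. The only difference is that you spell out the rationality-plus-invertibility perturbation in more detail than the paper, which simply asserts such a choice is possible; your remark about needing injectivity of $\widetilde{T}$ to upgrade differentiability of $a\mapsto f(\varphi(a,b))$ to directional differentiability of $f$ is slightly overcautious, since the chain rule gives this directly from the affine form of $\varphi$ without any rank hypothesis.
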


\begin{proof}
We can choose $\widetilde{e}, \widetilde{x}$ and $\widetilde{T}_{\infty}$ with
\[|\widetilde{e}-e|+|\widetilde{x}-x|+\|\widetilde{T}_{\infty}-T_{\infty}\|<\alpha\]
so that $\varphi$ is bilipschitz and belongs to the countable dense family of affine maps in Definition \ref{nullset}. This implies $\varphi(u) \in N$ whenever $\pi^{n-1}(u) \in \widetilde{N}$. Hence, since $\mu(\mathbb{R}\setminus \widetilde{N})=0$, it follows $\varphi(u) \in N$ for $\mathcal{L}^{n-1} \times \mu$ almost every $u \in Q$.

The preimage, under the bilipschitz map $\varphi$, of the $\sigma$-porous set of points at which $f$ is not regular is again $\sigma$-porous. Hence, since $\mathcal{L}^{n-1} \times \mu$ is doubling, Lemma \ref{doublingporousnull} and Lemma \ref{irregularporous} imply that the function $f$ is regular at $\varphi(u)$ for $\mathcal{L}^{n-1} \times \mu$ almost every $u$.

By the classical Rademacher theorem we know that for each fixed $b \in \mathbb{R}$, $f$ is differentiable at $\varphi(a,b)$ in direction $\varphi_{n-1}'(a,b)$ for $\mathcal{L}^{n-1}$ almost every $a \in \mathbb{R}^{n-1}$. Hence $f$ is differentiable at $\varphi(u)$ in direction $\varphi_{n-1}'(u)$ for $\mathcal{L}^{n-1} \times \mu$ almost every $u \in Q$. By Lemma \ref{regularpointdiff} this proves the result.
\end{proof}

We now fix $\alpha>0$ small relative to all previous parameters. Since $\alpha$ is the last parameter we define, we don't list precise estimates. When the fact $\alpha$ is small is used, it should be clear $\alpha$ could have been chosen appropriately at this stage. Fix $\widetilde{e} \in \mathbb{R}^{n-1}$, $\widetilde{x} \in \mathbb{R}^{n}$ and $\widetilde{T}_{\infty} \in L(\mathbb{R}^{n-1}, \mathbb{R}^{n})$ as in Lemma \ref{goodplane} with the corresponding $\gamma$ and $\varphi$ as defined in \eqref{defgamma} and \eqref{defvarphi}. Also define $g\colon \mathbb{R}^{n} \to \mathbb{R}^{n-1}$ by
\[g(u)=f(\varphi(u))-L_{\infty}\pi_{n-1}u.\]

\begin{lemma}\label{phigestimates}
If $\alpha$ is sufficiently small then $\mathrm{Lip}(\varphi) \leq 3$, $\mathrm{Lip}(g) \leq 5\mathrm{Lip}(f)$,
\begin{itemize}
	\item $|\varphi(u)-x_{\infty}|\leq 3r$ for all $u \in Q$, and
	\item for every $u, v \in \overline{Q}$ with $\langle u-v,e\rangle =0$,
	\[|g(u)-g(v)|\leq \frac{\tau r}{n(1+\|R_{\infty}\|_{H})}.\]
\end{itemize}
\end{lemma}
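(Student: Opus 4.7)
The plan is to extract each conclusion from the explicit formulas for $\gamma$ and $\varphi$ given in \eqref{defgamma} and \eqref{defvarphi}, using $\alpha$ as a free parameter to absorb the perturbation errors coming from replacing $e, x, T_{\infty}$ by $\widetilde{e}, \widetilde{x}, \widetilde{T}_{\infty}$. The decomposition $\varphi(u) = x_{\infty} + \gamma(\pi_{n-1}u) + \widetilde{T}_{\infty}\pi_{n-1}u + (0,\pi^{n-1}u)$ immediately gives $\mathrm{Lip}(\varphi) \leq \mathrm{Lip}(\gamma) + \|\widetilde{T}_{\infty}\| + 1$. Using $\mathrm{Lip}(\gamma)\leq 2\kappa$ with $\kappa<\eta_0/4$, and $\|\widetilde{T}_{\infty}\|\leq \|T_{\infty}\|+\alpha\leq 1+\alpha$ from Lemma \ref{dtlestimatesapplied}, the bound $\mathrm{Lip}(\varphi)\leq 3$ follows for $\alpha$ sufficiently small. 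Then $\mathrm{Lip}(g)\leq \mathrm{Lip}(f)\mathrm{Lip}(\varphi)+\|L_{\infty}\|\leq 3\mathrm{Lip}(f)+5/4\leq 5\mathrm{Lip}(f)$, since $\mathrm{Lip}(f)\geq 1$ by Proposition \ref{consequencesofassumptions}(3) and $\|L_{\infty}\|\leq 5/4$ by Lemma \ref{dtlestimatesapplied}.

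For $|\varphi(u)-x_{\infty}|\leq 3r$ when $u \in Q$, we estimate each of the three summands in $\varphi(u)-x_{\infty}$. The definition of $\Omega$ gives $|\pi_{n-1}u|\leq \sqrt{2}\,r$, while $|\pi^{n-1}u|\leq tr$. With $|\widetilde{x}|\leq |x|+\alpha = \kappa r+\alpha$, one gets $|\gamma(\pi_{n-1}u)|\leq (\kappa r+\alpha)(1+\sqrt{2})$, $|\widetilde{T}_{\infty}\pi_{n-1}u|\leq (1+\alpha)\sqrt{2}\,r$, and $|(0,\pi^{n-1}u)|\leq tr$. For $\alpha, t$ small these sum to less than $3r$.

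The most delicate step is the oscillation bound for $g$. Fix $u,v\in\overline{Q}$ with $\langle u-v,e\rangle =0$ and let $a=\pi_{n-1}(u-v)\in\mathbb{R}^{n-1}$, $b=\pi^{n-1}(u-v)\in\mathbb{R}$; since $e\in\mathbb{R}^{n-1}$, we have $\langle e,a\rangle=0$ in $\mathbb{R}^{n-1}$. The key observation is that
\[\gamma(\pi_{n-1}u)-\gamma(\pi_{n-1}v)=\langle \widetilde{e},a\rangle\,\widetilde{x}/r=\langle \widetilde{e}-e,a\rangle\,\widetilde{x}/r\]
has norm $O(\alpha|a|)$, so the $\gamma$-contribution to $\varphi(u)-\varphi(v)$ becomes negligible. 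Writing $\widetilde{T}_{\infty}a=T_{\infty}a+(\widetilde{T}_{\infty}-T_{\infty})a$ and setting
\[z=\varphi(v)-x_{\infty}+\bigl(\gamma(\pi_{n-1}u)-\gamma(\pi_{n-1}v)\bigr)+(\widetilde{T}_{\infty}-T_{\infty})a+(0,b),\]
we have $\varphi(u)=x_{\infty}+z+T_{\infty}a$. The regular differentiability estimate \eqref{xiregdiff} with $i=\infty$ gives $f(\varphi(u))=f(x_{\infty}+z)+L_{\infty}a+E_1$ with $|E_1|\leq \xi(|z|+|a|)$, provided $|z|+|a|<\delta_1$, which holds since $r\leq \delta_2/\kappa\leq\delta_1/6$. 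Comparing $f(x_{\infty}+z)$ with $f(\varphi(v))$ through the Lipschitz property of $f$ yields
\[|g(u)-g(v)|\leq \xi(|z|+|a|)+\mathrm{Lip}(f)\bigl(O(\alpha|a|)+|b|\bigr).\]
Since $|a|\leq 2\sqrt{2}\,r$, $|b|\leq 2tr$, and the parameters were chosen exactly so that $\xi\leq t/6$ and $t=\tau/\bigl(22n\,\mathrm{Lip}(f)(1+\|R_{\infty}\|_H)\bigr)$, each term on the right is a controllable fraction of $\tau r/\bigl(n(1+\|R_{\infty}\|_H)\bigr)$, and taking $\alpha$ small enough absorbs the remaining perturbation. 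The whole argument is conceptually straightforward; the main obstacle is strictly bookkeeping to verify that the carefully engineered parameter chain from the previous section has been set up so the numerical constants line up.
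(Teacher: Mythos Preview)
Your proof is correct and follows essentially the same approach as the paper: both use the explicit formula for $\varphi$ together with the regular differentiability estimate \eqref{xiregdiff} at $(x_\infty,T_\infty)$, the thinness of $Q$ in the last coordinate, and smallness of $\alpha$ to absorb perturbation errors. The only organizational difference is that the paper first proves the oscillation bound in the unperturbed case $\widetilde e=e$, $\widetilde x=x$, $\widetilde T_\infty=T_\infty$ and then applies a Lipschitz comparison, whereas you handle the perturbed quantities directly by observing $\gamma(\pi_{n-1}u)-\gamma(\pi_{n-1}v)=\langle\widetilde e-e,a\rangle\,\widetilde x/r=O(\alpha|a|)$; this is a minor repackaging of the same argument.
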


\begin{proof}
We use Lemma \ref{dtlestimatesapplied}. For the first inequality, since $\|T_{\infty}\|\leq 1$ and $\alpha$ is small,
\[\mathrm{Lip}(\varphi)\leq 2\kappa + \|\widetilde{T}_{\infty}\|+1 \leq 3.\]

For the second inequality, the facts $\|L_{\infty}\|\leq 5/4$ and $\mathrm{Lip}(f)\geq 1$ immediately imply
\[\mathrm{Lip}(g) \leq \frac{5}{4}+3\mathrm{Lip}(f) \leq 5\mathrm{Lip}(f).\]
For the third inequality, for small $\alpha$, we estimate,
\[|\varphi(u)-x_{\infty}| \leq \kappa r + 2\kappa r + 2r + tr + \kappa r = (4\kappa + t+2)r \leq 3r.\]

For the final estimate we use regular differentiability of $f$ at $x_{\infty}$ in direction $T_{\infty}$ which is \eqref{xiregdiff}. For convenience, first assume $\widetilde{e}=e, \widetilde{x}=x$ and $\widetilde{T}_{\infty}=T_{\infty}$.

We next suppose $u, v \in \overline{\Omega}\subset \mathbb{R}^{n-1} \subset \mathbb{R}^{n}$. Since $\langle u-v, e \rangle=0$ it follows that $\gamma(\pi_{n-1}u)=\gamma(\pi_{n-1}v)$ and hence $\varphi(u)-\varphi(v)=T_{\infty}(u-v)$. We use \eqref{xiregdiff} with $z=\gamma(v)+T_{\infty}v$ and $u-v$ in place of $u$. We estimate,
\[|\gamma(v)+T_{\infty}v|+|u-v|\leq |\varphi(v)-x_{\infty}|+|u-v|\leq 6r < \delta_{1}.\]
Hence, using \eqref{xiregdiff} and recalling $\gamma(u)=\gamma(v)$,
\begin{align*}
&|g(u)-g(v)|\\
&=|f(x_{\infty}+\gamma(u)+T_{\infty}u)-f(x_{\infty}+\gamma(v)+T_{\infty}v)-L_{\infty}(u-v)|\\
&= |f(x_{\infty}+\gamma(v)+T_{\infty}v+T_{\infty}(u-v))-f(x_{\infty}+\gamma(v)+T_{\infty}v)-L_{\infty}(u-v)|\\
&\leq \xi(|\gamma(v)+T_{\infty}v|+|u-v|)\\
&\leq 6\xi r.
\end{align*}
For general $u, v \in \overline{Q}$ we just use that $Q$ is relatively thin in the remaining direction,
\begin{align*}
|g(u)-g(v)|&\leq |g(u)-g(\pi_{n-1} u)|+|g(v)-g(\pi_{n-1} v)|\\
& \qquad +|g(\pi_{n-1} u)-g(\pi_{n-1} v)|\\
&\leq 2\mathrm{Lip}(g)tr + 6\xi r \leq 10\mathrm{Lip}(f)tr+tr\\
&\leq 11\mathrm{Lip}(f)tr=\frac{\tau r}{2n(1+\|R_{\infty}\|_{H})}.
\end{align*}
For general $\widetilde{e}, \widetilde{x}$ and $\widetilde{T}_{\infty}$ we note that, if we temporarily denote $g=g_{\widetilde{e},\widetilde{x},\widetilde{T}_{\infty}}$ then since $f$ is Lipschitz, provided $\alpha$ is sufficiently small, we can ensure
\[|g_{\widetilde{e},\widetilde{x},\widetilde{T}_{\infty}}(u)-g_{e,x,T_{\infty}}(u)|\leq \frac{\tau r}{4n(1+\|R_{\infty}\|_{H})}\]
for all $u \in \overline{Q}$. Thus, provided $\alpha$ is sufficiently small, the result follows.
\end{proof}

We now observe that $(\varphi(u), \varphi_{n-1}'(u)) \in D_{0}$ for $\mathcal{L}^{n-1} \times \mu$ almost every $u \in Q$. Indeed, for all $u \in Q$ we have $|\varphi(u)-x_{\infty}|\leq 3r$. If $\varphi(u) \in N$ this implies $d_{N}(\varphi(u),x_{\infty})\leq \eta_{0}/2$. Hence, since $d_{N}(x_{\infty}, x_{0})< \eta_{0}/2$ by Lemma \ref{dtlestimatesapplied},
\[d_{N}(\varphi(u),x_{0})\leq d_{N}(\varphi(u),x_{\infty})+d_{N}(x_{\infty},x_{0}) \leq \eta_{0}.\]
Further, using the expression for $\varphi_{n-1}'$ and Lemma \ref{dtlestimatesapplied}, it follows that for sufficiently small $\alpha$,
\[\|\varphi_{n-1}'(u)-T_{0}\| \leq \|\varphi_{n-1}'(u)-\widetilde{T}_{\infty}\|+\|\widetilde{T}_{\infty}-T_{\infty}\|+\|T_{\infty}-T_{0}\| \leq \eta_{0}.\]
By our choice of $\varphi$ after Lemma \ref{goodplane}, $\varphi(u) \in N$ and $f$ is regularly differentiable at $\varphi(u)$ in direction $\varphi_{n-1}'(u)$ at $\mathcal{L}^{n-1} \times \mu$ almost every $u\in Q$. Hence $(\varphi(u),\varphi_{n-1}'(u)) \in D_{0}$ for $\mathcal{L}^{n-1} \times \mu$ almost every $u \in Q$.

We aim to show that
\begin{equation}\label{positive}
\dashint_{Q} (h_{\infty}(x_{\infty},T_{\infty})-h_{\infty}(\varphi,\varphi_{n-1}')) \dd(\mathcal{L}^{n-1} \times \mu) > 0.
\end{equation}
Once this is done a contradiction follows. Indeed, suppose \eqref{positive} holds. Then there is $u \in Q$ such that $(\varphi(u),\varphi_{n-1}'(u)) \in D_{0}$ and 
\[h_{\infty}(x_{\infty},T_{\infty})-h_{\infty}(\varphi(u),\varphi_{n-1}'(u))>0\]
which contradicts the assumption $(x_{\infty}, T_{\infty})$ is a minimizer of $h_{\infty}$ in $D_{0}$.

\section{Integral Estimates}

We now prove \eqref{positive}. Recall from \eqref{changeinh},
\begin{equation}\label{changeinhwithvarphi}\begin{split}
&h_{\infty}(x_{\infty},T_{\infty})-h_{\infty}(\varphi,\varphi_{n-1}')\\
&\qquad =(\|f'(\varphi;\varphi_{n-1}')\|_{H}^{2}-\|L_{\infty}\|_{H}^{2})\\
&\qquad \qquad -\Phi(\varphi)-\Psi(\varphi_{n-1}')-\Upsilon(f'(\varphi;\varphi_{n-1}'))-\Delta(\varphi).
\end{split}\end{equation}
We now estimate the average integral of each of the terms on the right side. Most of the estimates are very similar to those in \citep{LPT12}. The main difference is in the estimate of the regularity term. Here we need to account for the fact $\mu$ is not Lebesgue measure but can still make the necessary estimates using Proposition \ref{integralregularity}.

\subsection{Estimate of $\dashint_{Q} \Phi(\varphi) \dd(\mathcal{L}^{n-1} \times \mu)$}

For $\mathcal{L}^{n-1} \times \mu$ almost every $u \in Q$ we know, by Lemma $\ref{goodplane}$, $\varphi(u) \in N$. By Lemma \ref{phigestimates} we know $|\varphi(u)-x_{\infty}|\leq 3r$ for every $u \in \overline{Q}$. By our choice of $r$ this implies $d_{N}(\varphi(u),x_{\infty}) \leq \tau/(\sum_{i=0}^{\infty} \lambda_{i})$ for $\mathcal{L}^{n-1} \times \mu$ almost every $u \in Q$

Hence, for such $u$,
\[|d_{N}(x_{i},\varphi(u))-d_{N}(x_{i},x_{\infty})|\leq d_{N}(\varphi(u),x_{\infty})\leq \tau/\sum_{i=0}^{\infty}\lambda_{i}.\]
This implies,
\[|\Phi(\varphi(u))|\leq \sum_{i=0}^{\infty} \lambda_{i} d_{N}(\varphi(u),x_{\infty}) \leq \tau.\]
Hence
\[\dashint_{Q} \Phi(\varphi) \dd(\mathcal{L}^{n-1} \times \mu) \leq \tau.\]

\subsection{Estimate of $\dashint_{Q} \Psi(\varphi_{n-1}') \dd(\mathcal{L}^{n-1} \times \mu)$}

By our choice of $x$ we know $Lx=0$. By the definition of $L$ this implies $L_{\Theta}(x \otimes e)=0$. Using \eqref{thetainfinity} we obtain, provided $\alpha$ is sufficiently small,
\begin{align*}
\Psi(\varphi_{n-1}')&=\Theta_{\infty}\left(\widetilde{T}_{\infty}+\kappa\frac{\widetilde{x} \otimes \widetilde{e}}{|x|}\right) - \Theta_{\infty}\left(T_{\infty}\right)\\
&\leq \Theta_{\infty}\left(T_{\infty}+\kappa\frac{x \otimes e}{|x|}\right) - \Theta_{\infty}\left(T_{\infty}\right)\\
& \qquad + \mathrm{Lip}(\Theta_{\infty})\left\|\widetilde{T}_{\infty} + \kappa\frac{\widetilde{x} \otimes \widetilde{e}}{|x|} - T_{\infty}-\kappa\frac{x \otimes e}{|x|}\right\|\\
&\leq \frac{\varepsilon \kappa}{8}+\frac{\varepsilon \kappa}{8}\\
&= \tau.
\end{align*}
Hence
\[\dashint_{Q} \Psi(\varphi_{n-1}') \dd(\mathcal{L}^{n-1} \times \mu) \leq \tau.\]

\subsection{Estimate of $\dashint_{Q} \Upsilon(f'(\varphi;\varphi_{n-1}')) \dd(\mathcal{L}^{n-1} \times \mu)$}

Using the definition of $g$ we have $f'(\varphi;\varphi_{n-1}')=g_{n-1}'+L_{\infty}$. Since $\sum_{i=0}^{\infty} \gamma_{i}=1/4$ we have,
\begin{align*}
\Upsilon(f'(\varphi(u);\varphi_{n-1}'(u))) &= \sum_{i=0}^{\infty} \gamma_{i}(\|(L_{i}-L_{\infty})-g_{n-1}'(u)\|_{H}^{2}-\|L_{i}-L_{\infty}\|_{H}^{2})\\
&=\sum_{i=0}^{\infty} \gamma_{i}(2\langle(L_{\infty}-L_{i}),g_{n-1}'(u)\rangle_{H}+\|g_{n-1}'(u)\|_{H}^{2})\\
&=\left\langle 2\sum_{i=0}^{\infty} \gamma_{i}(L_{\infty}-L_{i}),g_{n-1}'(u)\right\rangle_{H}+\frac{1}{4}\|g_{n-1}'(u)\|_{H}^{2}.
\end{align*}
Hence 
\begin{align*}
&\dashint_{Q} \Upsilon(f'(\varphi;\varphi_{n-1}'))\dd(\mathcal{L}^{n-1} \times \mu)\\ 
& \qquad = \dashint_{Q} \left\langle 2\sum_{i=0}^{\infty} \gamma_{i}(L_{\infty}-L_{i}),g_{n-1}'(u)\right\rangle_{H} \dd(\mathcal{L}^{n-1} \times \mu)\\
&\qquad \qquad + \frac{1}{4}\dashint_{Q} \|g_{n-1}'\|_{H}^{2}\dd(\mathcal{L}^{n-1} \times \mu).
\end{align*}

\subsection{Estimate of $\dashint_{Q} \Delta(\varphi) \dd(\mathcal{L}^{n-1} \times \mu)$}

First fix $0 \leq i \leq K$. Denote $s=24\xi r/s_{K}$ and
\[P=\{u \in Q: \Delta_{i}((x_{i},T_{i}),(\varphi(u),\varphi_{n-1}'(u)))-\Delta_{i}((x_{i},T_{i}),(x_{\infty},T_{\infty}))> \tau\}.\]

We show that for every $u \in P$,
\begin{equation}\label{regsi}
\mathrm{reg}_{n-1}g(u,B(u,s))>\frac{s_{i}}{8}.
\end{equation}

Fix $u \in P$. Choose $\widetilde{z} \in \mathbb{R}^{n}$ and $\widetilde{w} \in \mathbb{R}^{n-1} \subset \mathbb{R}^{n}$ such that
\begin{align*}
&|f_{x_{i},T_{i}}(\widetilde{z},\widetilde{w})-f_{x_{i},T_{i}}(\widetilde{z},0)-f_{\varphi(u),T_{i}}(\widetilde{z},\widetilde{w})+f_{\varphi(u),T_{i}}(\widetilde{z},0)|\\
&\qquad > \left(\|f_{x_{i},T_{i}}-f_{\varphi(u),T_{i}}\|_{r}-\frac{\tau}{2}\right)(|\widetilde{z}|+|\widetilde{w}|).
\end{align*}
We show that if $\widetilde{u}=\widetilde{z}+\widetilde{w}$ then
\begin{equation}\label{regsilemma}
|\widetilde{z}|+|\widetilde{w}|<\delta_{1} \mbox{ and } u+\widetilde{u}, u+\widetilde{z} \in B(u,s).
\end{equation}

We have two cases

\begin{case}
Suppose that $|\widetilde{z}|+|\widetilde{w}|\geq \delta_{1}/2$. From Lemma \ref{phigestimates} and the choice of $\delta_{2}$,
\[2\mathrm{Lip}(f)|\varphi(u)-x_{\infty}| \leq 6\mathrm{Lip}(f) r \leq 6\mathrm{Lip}(f)\frac{\delta_{2}}{\kappa}\leq \frac{\tau}{4}\delta_{1}\leq \frac{\tau}{2}(|\widetilde{z}|+|\widetilde{w}|).\]
It follows that
\begin{align*}
&(|\widetilde{z}|+|\widetilde{w}|)\|f_{x_{i},T_{i}}-f_{x_{\infty},T_{i}}\|_{r}\\
&\qquad \geq |f_{x_{i},T_{i}}(\widetilde{z},\widetilde{w}) - f_{x_{i},T_{i}}(\widetilde{z},0) - f_{x_{\infty},T_{i}}(\widetilde{z},\widetilde{w}) + f_{x_{\infty},T_{i}}(\widetilde{z},0)|\\
&\qquad \geq |f_{x_{i},T_{i}}(\widetilde{z},\widetilde{w}) - f_{x_{i},T_{i}}(\widetilde{z},0) - f_{\varphi(u),T_{i}}(\widetilde{z},\widetilde{w}) + f_{\varphi(u),T_{i}}(\widetilde{z},0)|\\
&\qquad \qquad -2\mathrm{Lip}(f)|\varphi(u)-x_{\infty}|\\
&\qquad \geq \left(\|f_{x_{i},T_{i}}-f_{\varphi(u),T_{i}}\|_{r}-\frac{\tau}{2}\right)(|\widetilde{z}|+|\widetilde{w}|)-\frac{\tau}{2}(|\widetilde{z}|+|\widetilde{w}|)\\
&\qquad \geq (|\widetilde{z}|+|\widetilde{w}|)(\|f_{x_{i},T_{i}}-f_{\varphi(u),T_{i}}\|_{r}-\tau).
\end{align*}
Using the definition of $\Delta_{i}$ this implies
\[\Delta_{i}((x_{i},T_{i}),(\varphi(u),\varphi_{n-1}'(u)))-\Delta_{i}((x_{i},T_{i}),(x_{\infty},T_{\infty}))\leq \tau\]
which contradicts the assumption that $u \in P$.
\end{case}

\begin{case}
Suppose that $|\widetilde{z}|+|\widetilde{w}|<\delta_{1}/2$. If $s \geq |\widetilde{u}|+|\widetilde{z}|$ then \eqref{regsilemma} is clear. Hence we suppose $s\leq |\widetilde{u}|+|\widetilde{z}|$. It follows that $s \leq 2(|\widetilde{z}|+|\widetilde{w}|)$. Define $\widehat{z} \in \mathbb{R}^{n}$ by the requirement that $\varphi(u)+\widetilde{z}=x_{\infty}+\widehat{z}$. Then by Lemma \ref{phigestimates} and the choice of $\delta_{2}$,
\[|\widehat{z}|+|\widetilde{w}|\leq |\varphi(u)-x_{\infty}|+|\widetilde{z}|+|\widetilde{w}|\leq 3r+|\widetilde{z}|+|\widetilde{w}|<\delta_{1}.\]
We use the estimate \eqref{xiregdiff} from regular differentiability, our choice of $\xi$ and $s$, and the fact $\|T_{i}-T_{\infty}\|\leq s_{i}/8\mathrm{Lip}(f)$ and $\|L_{i}-L_{\infty}\|\leq s_{i}/8$ from Lemma \ref{dtlestimatesapplied} to conclude,

\begin{align*}
|f_{\varphi(u),T_{i}}(\widetilde{z},\widetilde{w})&-f_{\varphi(u),T_{i}}(\widetilde{z},0)-L_{i}\widetilde{w}|\\
&=|f_{x_{\infty},T_{i}}(\widehat{z},\widetilde{w})-f_{x_{\infty},T_{i}}(\widehat{z},0)-L_{i}\widetilde{w}|\\
&\leq |f_{x_{\infty},T_{\infty}}(\widehat{z},\widetilde{w})-f_{x_{\infty},T_{\infty}}(\widehat{z},0)-L_{\infty}\widetilde{w}|\\
& \qquad + |f_{x_{\infty},T_{\infty}}(\widehat{z},\widetilde{w})-f_{x_{\infty},T_{i}}(\widehat{z},\widetilde{w})| + |L_{i}\widetilde{w}-L_{\infty}\widetilde{w}|\\
&\leq \xi(|\widehat{z}|+|\widetilde{w}|)+\mathrm{Lip}(f)|T_{\infty}\widetilde{w}-T_{i}\widetilde{w}|+\frac{s_{i}}{8}|\widetilde{w}|\\
&\leq \xi(3r+|\widetilde{z}|+|\widetilde{w}|)+\frac{s_{i}}{4}|\widetilde{w}|\\
&\leq \frac{ss_{i}}{8}+\frac{s_{i}}{2}(|\widetilde{z}|+|\widetilde{w}|)\\
&\leq \frac{3s_{i}}{4}(|\widetilde{z}|+|\widetilde{w}|).
\end{align*}

Since $|\widetilde{z}|+|\widetilde{w}|<\delta_{1}$, another application of regular differentiability in \eqref{xiregdiff} implies
\[|f_{x_{i},T_{i}}(\widetilde{z},\widetilde{w})-f_{x_{i},T_{i}}(\widetilde{z},0)-L_{i}\widetilde{w}|\leq \xi(|\widetilde{z}|+|\widetilde{w}|)\leq \frac{s_{i}}{4}(|\widetilde{z}|+|\widetilde{w}|).\]
Hence we obtain, by triangle inequality and the original choice of $\widetilde{z}$ and $\widetilde{w}$,
\[\left(\|f_{x_{i},T_{i}}-f_{\varphi(u),T_{i}}\|_{r}-\frac{\tau}{2}\right)(|\widetilde{z}|+|\widetilde{w}|) \leq s_{i}(|\widetilde{z}|+|\widetilde{w}|)\]
which implies $\Delta_{i}((x_{i},T_{i}),(\varphi(u),\varphi_{n-1}'(u)))\leq \tau /2$ so $u \notin P$.
\end{case}

We have established \eqref{regsilemma} and now show \eqref{regsi}. We first estimate, using the fact $\alpha$ is small and $\widetilde{w} \in \mathbb{R}^{n-1}$,
\begin{align*}
&|\varphi(u+\widetilde{w})-\varphi(u)-T_{i}\widetilde{w}|\\
& \qquad \leq (\|T_{i}-T_{\infty}\|+\|T_{\infty}-\widetilde{T}_{\infty}\|+\mathrm{Lip}(\gamma))|\widetilde{w}|\\
& \qquad \leq \left( \frac{s_{i}}{8\mathrm{Lip}(f)}+\kappa+2\kappa \right)|\widetilde{w}|\\
& \qquad \leq \frac{s_{i}}{4\mathrm{Lip}(f)}|\widetilde{w}|.
\end{align*}

Combining this with \eqref{regsilemma},
\begin{align*}
(|\widetilde{u}|&+|\widetilde{z}|)\mathrm{reg}_{n-1}g(u,B(u,s)) \geq |g(u+\widetilde{u})-g(u+\widetilde{z})|\\
&=|f(\varphi((u+\widetilde{z}+\widetilde{w})))-f(\varphi((u+\widetilde{z})))-L_{\infty}(\widetilde{w})|\\
&\geq |f(\varphi(u+\widetilde{w})+\widetilde{z})-f(\varphi(u)+\widetilde{z})-L_{\infty}(\widetilde{w})|\\
&\qquad - |f(\varphi(u+\widetilde{w})+\widetilde{z})-f(\varphi(u)+\widetilde{z})\\
&\qquad \qquad -f(\varphi((u+\widetilde{z}+\widetilde{w})))+f(\varphi((u+\widetilde{z})))|\\
&\geq |f_{\varphi(u),T_{i}}(\widetilde{z},\widetilde{w})-f_{\varphi(u),T_{i}}(\widetilde{z},0)-L_{\infty}(\widetilde{w})|\\
&\qquad - \mathrm{Lip}(f)|\varphi(u+\widetilde{w}) -\varphi(u)-T_{i}\widetilde{w}|\\
&\qquad \qquad -2\mathrm{Lip}(f)\mathrm{Lip}(\varphi) |\widetilde{w}|\\
&\geq |f_{\varphi(u),T_{i}}(\widetilde{z},\widetilde{w})-f_{\varphi(u),T_{i}}(\widetilde{z},0)+f_{x_{i},T_{i}}(\widetilde{z},0)-f_{x_{i},T_{i}}(\widetilde{z},\widetilde{w})|\\
&\qquad -|f_{x_{i},T_{i}}(\widetilde{z},\widetilde{w})-f_{x_{i},T_{i}}(\widetilde{z},0)-L_{i}(\widetilde{w})|-\|L_{i}-L_{\infty}\| |\widetilde{w}|\\
&\qquad \qquad -\frac{s_{i}}{4}|\widetilde{w}|-6\mathrm{Lip}(f)\kappa |\widetilde{w}|\\
&\geq \left( \|f_{x_{i},T_{i}}-f_{\varphi(u),T_{i}}\|_{r}-\frac{\tau}{2}\right)(|\widetilde{z}|+|\widetilde{w}|)\\
&\qquad  -|f_{x_{i},T_{i}}(\widetilde{z},\widetilde{w})-f_{x_{i},T_{i}}(\widetilde{z},0)-L_{i}(\widetilde{w})|-\frac{s_{i}}{8} |\widetilde{w}|\\
&\qquad \qquad -\frac{s_{i}}{4}|\widetilde{w}|-\frac{s_{i}}{8} |\widetilde{w}|.
\end{align*}
For the first term we use the fact $u \in P$ implies $\|f_{x_{i},T_{i}}-f_{\varphi(u),T_{i}}\|_{r}>\tau+s_{i}$. For the second term we use regular differentiability \eqref{xiregdiff}. Hence the above expression is greater or equal than
\begin{align*}
s_{i}(|\widetilde{z}|+|\widetilde{w}|)-\xi(|\widetilde{z}|+|\widetilde{w}|)-\frac{s_{i}}{2}|\widetilde{w}| &\geq \frac{s_{i}}{4}(|\widetilde{z}|+|\widetilde{w}|)\\
& \geq \frac{s_{i}}{8}(|\widetilde{u}|+|\widetilde{z}|)
\end{align*}
which proves \eqref{regsi}.

Let $Q_{0}=\{u \in Q: B(u,s) \subset Q\}$. Then, from Proposition \ref{integralregularity},
\[ (\mathcal{L}^{n-1}\times \mu)(P \cap Q_{0}) \leq \frac{C(\mu)(5\mathrm{Lip}(f))^{2C(\mu)-2}}{(s_{i}/8)^{2C(\mu)}}\int_{Q} \|g_{n-1}'\|^{2} \dd (\mathcal{L}^{n-1} \times \mu).\]

Recall $\Omega$ is a cylinder of height $r$ whose cross sections are balls in $\mathbb{R}^{n-2}$ of radius $r$; hence $\mathcal{L}^{n-1}(\Omega)=\omega_{n-2}r^{n-1}$. We estimate, using the Bernoulli inequality and the choice of $s$ and $\xi$,

\begin{align*}
\frac{(\mathcal{L}^{n-1} \times \mu)(Q_{0})}{(\mathcal{L}^{n-1} \times \mu)(Q)} &\geq \left(1-\frac{2s}{r}\right) \left(1-\frac{s}{r}\right)^{n-2} \frac{\mu(B(0,tr-s))}{\mu(B(0,tr))}\\
&\geq \left(1-\frac{ns}{r}\right)\frac{\mu(B(0,tr-s))}{\mu(B(0,tr))}\\
&\geq (1-\tau).
\end{align*}
Using the fact $\Delta_{i}((x_{i},T_{i}),(\varphi,\varphi_{n-1}'))\leq 1$ and our choice of $\sigma_{i}$ we conclude,
\begin{align*}
&\sigma_{i}\int_{Q}( \Delta_{i}((x_{i},T_{i}),(\varphi,\varphi_{n-1}'))-\Delta_{i}((x_{i},T_{i}),(x_{\infty},T_{\infty}))) \dd (\mathcal{L}^{n-1} \times \mu)\\
&\qquad \leq \sigma_{i} \int_{P} \Delta_{i}((x_{i},T_{i}),(\varphi,\varphi_{n-1}')) \dd (\mathcal{L}^{n-1} \times \mu) + \sigma_{i}\tau (\mathcal{L}^{n-1} \times \mu)(Q \setminus P)\\
&\qquad \leq \sigma_{i}(\mathcal{L}^{n-1} \times \mu)(P \cap Q_{0}) + \sigma_{i} (\mathcal{L}^{n-1} \times \mu)(Q \setminus Q_{0})+\sigma_{i}\tau (\mathcal{L}^{n-1} \times \mu)(Q)\\
&\qquad \leq \sigma_{i}\frac{C(\mu)(5\mathrm{Lip}(f))^{2C(\mu)-2}}{(s_{i}/8)^{2C(\mu)}}\int_{Q} \|g_{n-1}'\|^{2} \dd (\mathcal{L}^{n-1} \times \mu)\\
&\qquad \qquad +2\sigma_{i}\tau(\mathcal{L}^{n-1} \times \mu)(Q)\\
&\qquad \leq 2^{-i-3}\left(\int_{Q}\|g_{n-1}'\|^{2} \dd (\mathcal{L}^{n-1} \times \mu)+\tau(\mathcal{L}^{n-1} \times \mu)(Q)\right).
\end{align*}

For $i>K$, we note that $i>K$ implies $\varepsilon>1/i$ and the choice of $\sigma_{i}, \kappa$ and $\tau$ give,
\[\sigma_{i}\leq \frac{2^{-i-5}}{i+1}\frac{s_{i}}{48\mathrm{Lip}(f)}\leq 2^{-i-3} \frac{\varepsilon \kappa}{4}=2^{-i-3}\tau.\]
Hence we obtain, for $i>K$,
\begin{align*}
&\sigma_{i}\int_{Q}( \Delta_{i}((x_{i},T_{i}),(\varphi,\varphi_{n-1}'))-\Delta_{i}((x_{i},T_{i}),(x_{\infty},T_{\infty}))) \dd (\mathcal{L}^{n-1} \times \mu)\\
&\qquad \leq 2^{-i-3}\tau (\mathcal{L}^{n-1} \times \mu)(Q).
\end{align*}

Adding the inequalities together, for different $i$, we obtain, 
\[\dashint_{Q} \Delta(\varphi) \dd(\mathcal{L}^{n-1} \times \mu) \leq \frac{1}{4}\dashint_{Q} \|g_{n-1}'\|^{2} \dd(\mathcal{L}^{n-1} \times \mu) + \frac{\tau}{4}.\]

\subsection{Combining the Estimates}

It remains to analyze the value of the first term in \eqref{changeinhwithvarphi}. We use the fact $f'(\varphi;\varphi_{n-1}')=g_{n-1}'+L_{\infty}$ to obtain,
\begin{align*}
& \dashint_{Q} (\|f'(\varphi;\varphi_{n-1}')\|_{H}^{2}-\|L_{\infty}\|_{H}^{2}) \dd(\mathcal{L}^{n-1} \times \mu)\\
& \qquad = \dashint_{Q} \|g_{n-1}'\|_{H}^{2} \dd(\mathcal{L}^{n-1} \times \mu) + \dashint_{Q} \langle 2L_{\infty},g_{n-1}'\rangle_{H} \dd(\mathcal{L}^{n-1} \times \mu).
\end{align*}
By putting together the previous estimates of this section and using the definition of $R_{\infty}$ we find (all integrals are with respect to $\mathcal{L}^{n-1} \times \mu$),
\begin{align*}
&\dashint_{Q} (h_{\infty}(x_{\infty},T_{\infty})-h_{\infty}(\varphi,\varphi_{n-1}'))\\
& \qquad \geq \dashint_{Q} \|g_{n-1}'\|_{H}^{2} + \dashint_{Q} <2L_{\infty},g_{n-1}'>_{H}-\frac{1}{4}\dashint_{Q}\|g_{n-1}'\|_{H}^{2}\\
& \qquad \qquad + \dashint_{Q} \left\langle2\sum_{i=0}^{\infty} \gamma_{i}(L_{i}-L_{\infty}),g_{n-1}'\right\rangle_{H}-\frac{1}{4}\dashint_{Q} \|g_{n-1}'\|_{H}^{2}-\frac{9\tau}{4}\\
& \qquad \geq \dashint_{Q} \langle R_{\infty},g_{n-1}'\rangle-\frac{9\tau}{4}\\
& \qquad = \langle R_{\infty}e,w\rangle + \dashint_{Q}\langle R_{\infty},g_{n-1}'-w \otimes e\rangle_{H} - \frac{9\tau}{4}\\
& \qquad \geq \frac{7\tau}{4} - \|R_{\infty}\|_{H} \left\| \dashint_{Q} (g_{n-1}'-w\otimes e)\right\|_{H}.
\end{align*}
Hence to conclude our proof it suffices to show that
\[\|R_{\infty}\|_{H}\left\| \dashint_{Q} (g_{n-1}'-w \otimes e) \dd(\mathcal{L}^{n-1} \times \mu) \right\|_{H} < \frac{7\tau}{4}.\]

\subsection{Estimate of $\left\| \dashint_{Q} (g_{n-1}'-w \otimes e) \dd(\mathcal{L}^{n-1} \times \mu) \right\|_{H}$}

We define the maps $\zeta, \widetilde{\zeta}\colon \mathbb{R}^{n} \to \mathbb{R}^{n-1}$ by
\[\zeta(u)=g(\langle u,e\rangle e) \mbox{ and } \widetilde{\zeta}(u)=\zeta(u)-g(0)-\langle u,e\rangle w.\]
By Lemma \ref{phigestimates} we have, for every $u \in \overline{Q}$,
\[|g(u)-\zeta(u)|\leq \frac{\tau r}{n(1+\|R_{\infty}\|_{H})}.\]

We now use the following inequality \citep[Corollary 9.4.2]{LPT12} which arises from an application of the divergence theorem.

\begin{lemma}
Let $\Pi \subset \mathbb{R}^{n-1}$ be a bounded open set with Lipschitz boundary. Then for every Lipschitz function $\Lambda \colon \overline{\Pi} \to \mathbb{R}^{m}$,
\[\left\| \int_{\Pi} \Lambda'(u)\dd \mathcal{L}^{n-1}(u)\right\|_{H} \leq \mathcal{H}^{n-2}(\partial \Pi) \max_{u \in \partial \Pi} |\Lambda(u)|.\]
\end{lemma}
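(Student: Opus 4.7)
The plan is to reduce the inequality to a boundary integral via the divergence theorem applied component by component. Since $\Lambda\colon\overline{\Pi}\to\mathbb{R}^m$ is Lipschitz and $\Pi$ has Lipschitz boundary, Rademacher's theorem guarantees that $\Lambda'$ exists $\mathcal{L}^{n-1}$-a.e. and the Gauss--Green formula applies to each component. Writing $\Lambda=(\Lambda_1,\ldots,\Lambda_m)$ and letting $\nu\colon\partial\Pi\to\mathbb{R}^{n-1}$ be the outward unit normal (defined $\mathcal{H}^{n-2}$-a.e.\ on $\partial\Pi$), I would first establish the identity
\[
\int_{\Pi}\partial_{j}\Lambda_{i}\dd\mathcal{L}^{n-1}=\int_{\partial\Pi}\Lambda_{i}\,\nu_{j}\dd\mathcal{H}^{n-2}
\qquad\text{for every }1\le i\le m,\ 1\le j\le n-1.
\]
Hence, denoting $M=\int_{\Pi}\Lambda'(u)\dd\mathcal{L}^{n-1}(u)\in L(\mathbb{R}^{n-1},\mathbb{R}^{m})$, the matrix $M$ is nothing other than the boundary integral
\[
M=\int_{\partial\Pi}\Lambda(u)\otimes\nu(u)\dd\mathcal{H}^{n-2}(u),
\]
where $(a\otimes b)(v)=\langle b,v\rangle a$ for $a\in\mathbb{R}^{m}$, $b,v\in\mathbb{R}^{n-1}$.

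The second step is a pointwise estimate on the integrand. The Hilbert--Schmidt norm of a rank-one map $a\otimes b$ equals $|a||b|$, as a one-line check with any orthonormal basis shows. Applying this with $a=\Lambda(u)$ and $b=\nu(u)$, and recalling $|\nu(u)|=1$ at $\mathcal{H}^{n-2}$-a.e.\ point of $\partial\Pi$, gives $\|\Lambda(u)\otimes\nu(u)\|_{H}=|\Lambda(u)|$.

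The final step is to pass the Hilbert--Schmidt norm inside the integral via the triangle inequality for Bochner (or just componentwise Cauchy--Schwarz) integrals, and then bound $|\Lambda(u)|$ by its maximum on $\partial\Pi$:
\[
\|M\|_{H}\le\int_{\partial\Pi}\|\Lambda(u)\otimes\nu(u)\|_{H}\dd\mathcal{H}^{n-2}(u)=\int_{\partial\Pi}|\Lambda(u)|\dd\mathcal{H}^{n-2}(u)\le\mathcal{H}^{n-2}(\partial\Pi)\max_{u\in\partial\Pi}|\Lambda(u)|.
\]

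The only non-routine point is justifying the divergence theorem for a merely Lipschitz integrand on a Lipschitz domain, but this is standard (approximate $\Lambda$ by smooth functions with uniformly bounded Lipschitz constant and pass to the limit using dominated convergence on $\Pi$ and on $\partial\Pi$). No further subtlety is expected; the rest is bookkeeping of the Hilbert--Schmidt structure.
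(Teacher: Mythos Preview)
Your argument is correct and is exactly the approach the paper alludes to: the lemma is quoted from \cite[Corollary 9.4.2]{LPT12} with the remark that it ``arises from an application of the divergence theorem,'' which is precisely your Gauss--Green computation yielding $\int_\Pi \Lambda' = \int_{\partial\Pi}\Lambda\otimes\nu$ followed by $\|\Lambda\otimes\nu\|_H=|\Lambda|$. Nothing is missing.
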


For each $b \in (-tr,tr)$ we apply the lemma to the map $z \mapsto g(z,b)-\zeta(z,b)$ for $z \in \Omega$. Recall $\Omega \subset \mathbb{R}^{n-1}$ is a cylinder of height $r$ whose cross sections are balls in $\mathbb{R}^{n-2}$ of radius $r$; hence $\mathcal{H}^{n-2}(\partial \Omega)=2\omega_{n-2}r^{n-2}+(n-2)w_{n-2}r^{n-2}$.

We estimate,
\begin{align*}
& \left\| \int_{Q} (g_{n-1}'-\zeta_{n-1}') \dd (\mathcal{L}^{n-1} \times \mu) \right\|_{H} \\
& \qquad \leq \int_{(-tr,tr)} \left\| \int_{\Omega} (g_{n-1}'-\zeta_{n-1}') \dd \mathcal{L}^{n-1} \right\|_{H} \dd \mu\\
& \qquad \leq \int_{(-tr,tr)} \frac{\tau r}{n(1+\|R_{\infty}\|_{H})} \mathcal{H}^{n-2}(\partial \Omega) \dd \mu\\
& \qquad = \frac{\mu(-tr,tr) \tau r (2\omega_{n-2}r^{n-2}+(n-2)\omega_{n-2}r^{n-2})}{n(1+\|R_{\infty}\|_{H})}\\
& \qquad =\frac{ \tau (\mathcal{L}^{n-1} \times \mu) (Q) }{(1+\|R_{\infty}\|_{H})}.
\end{align*}
Note that the function $\widetilde{\zeta}$ depends only on the projection onto $\mathbb{R}e$. Define $\Lambda\colon \mathbb{R} \to \mathbb{R}^{n-1}$ by $\Lambda(\theta)=\widetilde{\zeta}(\theta e)$ so that $\widetilde{\zeta}(u)=\Lambda(\langle u, e \rangle)$. It follows from Rademacher's theorem that $\Lambda'(\theta)$ exists for almost every $\theta$. Hence, using Fubini's theorem and the chain rule, we see $\widetilde{\zeta}_{n-1}'(u)=\Lambda'(\langle u, e \rangle) \otimes e$ for $\mathcal{L}^{n-1} \times \mu$ almost every $u$. Let $S_{0}=\{u \in \overline{\Omega}: \langle u,e\rangle =0\}$. Then, by Fubini's theorem,
\begin{align*}
\left\| \int_{Q} \widetilde{\zeta}_{n-1}' \dd (\mathcal{L}^{n-1} \times \mu) \right\|_{H} &\leq \int_{B(0,tr)} \left\| \int_{\Omega} \Lambda'(\langle u, e \rangle) \otimes e \dd \mathcal{L}^{n-1} \right\|_{H} \dd\mu\\
&\leq \mu(B(0,tr)) \left\| \int_{S_{0}} (\Lambda(0)-\Lambda(-r)) \otimes e  \dd \mathcal{H}^{n-2} \right\|_{H}\\
&= |\widetilde{\zeta}(-re)| \mathcal{H}^{n-2}(S_{0})\mu(B(0,tr))\\
&= |g(0)-g(-re)-rw|\mathcal{H}^{n-2}(S_{0})\mu(B(0,tr)).
\end{align*}
By using the expressions for $g$ and $w$ one can check,
\begin{align*}
&g(0)-g(-re)-rw\\
&\qquad  = - [f(x_{\infty}+T_{\infty}(-re))-f(x_{\infty})-L_{\infty}(-re)]\\
&\qquad \qquad +[f(x_{\infty}-T_{\infty}re)-f(x_{\infty}-\widetilde{T}_{\infty}re+\widetilde{x}-\kappa r \langle e,\widetilde{e}\rangle \widetilde{x}/|x|)]\\
& \qquad \qquad \qquad +[f(x_{\infty}+\widetilde{x})-f(x_{\infty}+x)].
\end{align*}
For the first term, as $|r|<\delta_{1}$, we may use \eqref{xiregdiff} to bound it by $\xi r$. Since $\kappa r=|x|$, the other terms can be made small, relative to $\xi r$, by choosing $\alpha$ small. Hence we obtain, by choice of $\xi$,
\begin{align*}
\left\| \int_{Q} \widetilde{\zeta}_{n-1}' \dd (\mathcal{L}^{n-1} \times \mu) \right\|_{H} &\leq 2\xi r \mathcal{H}^{n-2}(S_{0})\mu(B(0,tr))\\
&\leq 2\xi \mathcal{L}^{n-1}(\Omega)\mu(B(0,tr))\\ 
&\leq \frac{\tau (\mathcal{L}^{n-1}\times \mu)(Q)}{2(1+\|R_{\infty}\|_{H})}.
\end{align*}
Note that $\widetilde{\zeta}_{n-1}'=\zeta_{n-1}'-w \otimes e$. Hence we can estimate,
\begin{align*}
& \|R_{\infty}\|_{H} \left\| \dashint_{Q} (g_{n-1}'-w \otimes e) \dd(\mathcal{L}^{n-1} \times \mu) \right\|_{H}\\
& \qquad \leq \|R_{\infty}\|_{H} \left\| \dashint_{Q} (g_{n-1}'-\zeta_{n-1}') \dd(\mathcal{L}^{n-1} \times \mu) \right\|_{H}\\
& \qquad \qquad + \|R_{\infty}\|_{H}\left\| \dashint_{Q} (\zeta_{n-1}'-w \otimes e) \dd(\mathcal{L}^{n-1} \times \mu) \right\|_{H}\\
& \qquad \leq \tau + \frac{\tau}{2} < \frac{7\tau}{4}.
\end{align*}
This completes the proof of \eqref{positive} which proves Proposition \ref{finalprop} and hence Theorem \ref{theoremdiffinnullset}.

\begin{remark}\label{finrem}
When considering mappings from $\mathbb{R}^{n}$ to $\mathbb{R}^{m}$, $m<n$,
we may replace in the above arguments the measure $\mathcal{L}^{n-1}\times \mu$
with $\mathcal{L}^{m}\times \nu$ where $\nu$ is a doubling measure on $\mathbb{R}^{n-m}$.
By \citet{Wu98}, such $\mu$ can be chosen of arbitrarily small Hausdorff dimension. Together with the easy fact that any set is contained in a $G_{\delta}$ set of the same Hausdorff dimension, this leads to a set of Hausdorff dimension arbitrarily close to $m$ that contains points of differentiability of every Lipschitz function $f\colon \mathbb{R}^{n}\to\mathbb{R}^{m}$.
\end{remark}

\bibliographystyle{plainnat}

\bibliography{Differentiability}

\end{document}